\documentclass[11pt]{article}

\usepackage{amssymb,amsmath,amsfonts}
\usepackage{graphicx,color,enumitem}
\usepackage{calrsfs}	
\usepackage{amsthm}	
\usepackage{bm}
\usepackage[round]{natbib}
\usepackage{hyperref}

\usepackage{tikz}

\usepackage{etoolbox}

\makeatletter

\patchcmd{\NAT@citex}
  {\@citea\NAT@hyper@{%
     \NAT@nmfmt{\NAT@nm}%
     \hyper@natlinkbreak{\NAT@aysep\NAT@spacechar}{\@citeb\@extra@b@citeb}%
     \NAT@date}}
  {\@citea\NAT@nmfmt{\NAT@nm}%
   \NAT@aysep\NAT@spacechar\NAT@hyper@{\NAT@date}}{}{}

\patchcmd{\NAT@citex}
  {\@citea\NAT@hyper@{%
     \NAT@nmfmt{\NAT@nm}%
     \hyper@natlinkbreak{\NAT@spacechar\NAT@@open\if*#1*\else#1\NAT@spacechar\fi}%
       {\@citeb\@extra@b@citeb}%
     \NAT@date}}
  {\@citea\NAT@nmfmt{\NAT@nm}%
   \NAT@spacechar\NAT@@open\if*#1*\else#1\NAT@spacechar\fi\NAT@hyper@{\NAT@date}}
  {}{}

\makeatother





\setlength{\textheight}{8.3in}
\setlength{\topmargin}{0in}
\setlength{\headheight}{0in}
\setlength{\oddsidemargin}{0.2in}
\setlength{\evensidemargin}{0.2in}
\setlength{\textwidth}{6.1in}
\headsep=0.1in



\newcommand{\E}{{\mathbb E}}
\newcommand{\F}{{\mathbb F}}

\renewcommand{\P}{{\mathbb P}}

\newcommand{\C}{{\mathbb C}}
\newcommand{\R}{{\mathbb R}}
\renewcommand{\S}{{\mathbb S}}
\newcommand{\N}{{\mathbb N}}
\newcommand{\Z}{{\mathbb Z}}

\newcommand{\Ecal}{{\mathcal E}}
\newcommand{\Fcal}{{\mathcal F}}

\newcommand{\Scal}{{\mathcal S}}

\newcommand{\Wcal}{{\mathcal W}}

\newcommand{\conv}{\mathrm{conv}}

\newcommand{\id}{{\mathrm{id}}}

\DeclareMathOperator{\interior}{int}
\DeclareMathOperator{\supp}{supp}

\newtheorem{theorem}{Theorem}

\newtheorem{conjecture}[theorem]{Conjecture}

\newtheorem{definition}[theorem]{Definition}
\newtheorem{example}[theorem]{Example}

\newtheorem{lemma}[theorem]{Lemma}

\newtheorem{proposition}[theorem]{Proposition}
\newtheorem{remark}[theorem]{Remark}

\numberwithin{equation}{section}
\numberwithin{theorem}{section}

\begin{document}

\title{Affine processes with compact state space\footnote{Martin Larsson gratefully acknowledges support from SNF Grant 205121\_163425.}}
\author{Paul Kr\"uhner\footnote{University of Liverpool, Institute of Financial \& Actuarial Mathematics, Mathematical Sciences Building, Liverpool, L69 7ZL, Email: peisenbe@liverpool.ac.uk} \quad\quad Martin Larsson\footnote{ETH Zurich, Department of Mathematics, R\"amistrasse 101, CH-8092, Zurich, Switzerland. Email: martin.larsson@math.ethz.ch}}

\maketitle

\begin{abstract}
The behavior of affine processes, which are ubiquitous in a wide range of applications, depends crucially on the choice of state space. We study the case where the state space is compact, and prove in particular that (i) no diffusion is possible; (ii) jumps are possible and enforce a grid-like structure of the state space; (iii) jump components can feed into drift components, but not vice versa. Using our main structural theorem, we classify all bivariate affine processes with compact state space. Unlike the classical case, the characteristic function of an affine process with compact state space may vanish, even in very simple cases.
\\[2ex] 

\noindent{\textbf {Keywords:} Affine processes, Compact state space, Markov chains.}
\\[2ex]
\noindent{\textbf {MSC2010 subject classifications:} 60J25, 60J27, 60J75.}
\end{abstract}

\section{Introduction}

Affine processes are ubiquitous in a wide range of applications, in particular in finance, which has motivated a rich literature developing the mathematical theory of affine processes. We refrain from giving a comprehensive overview here; good starting points include~\cite{duffie/singleton:1999,duffie.al.00,Filipovic:2009fk,piazzesi:2010} and references therein.

Every affine process comes with a state space where it evolves, and the corresponding existence and uniqueness theory depends crucially on the properties of the state space. A complete theory is available for the product space $\R^m\times\R^n_+$ \citep{duffie.al.03} and the convex cone of symmetric positive semidefinite matrices \citep{cuchiero.al.11}, and in the diffusion case for polyhedral and quadratic state spaces \citep{Spreij/Veerman:2012}. Various other state spaces have also been studied, such as symmetric cones \citep{cuchiero/etal:2016}. 
Our focus in the present paper is on compact state spaces, which so far have not received a systematic treatment.

While slightly different definitions of affine processes exist in the literature, they all have the common feature that affine processes are semimartingales (at least before a killing time in the non-conservative case) whose differential characteristics are affine functions of the current state. Since diffusion coefficients must remain nonnegative on the state space and degenerate on the boundary, state spaces which are cones fit well with the affine structure. A similar remark applies to jump intensities. For compact state spaces, as we will see, the affine structure forces the diffusion coefficient to vanish. Jump intensities may however be nonzero, but only if the state space has a grid-like structure along directions where jumps can occur. In particular, the state space may be a finite discrete set, which after an affine transformation only contains points with integer coordinates. Mathematically, this leads to arguments with a combinatorial rather than analytical flavor.

Classically, affine processes have the property that the conditional Fourier transform is an exponential-affine function of the state (indeed, this is sometimes taken as part of the definition of an affine process). In cases where the state space is a finite set, it turns out that the characteristic function may attain the value zero, which precludes the exponential-affine structure. Instead, the characteristic function is a polynomial, where the state appears {\em in the exponents}; this leads to a well-defined expression since, up to an affine transformation, the process has integer coordinates.

The paper is organized as follows. In Section~\ref{S:setup} we introduce the setup and summarize some of our key results. Then, in Section~\ref{S:diff}, we prove that affine processes with compact state space cannot have a diffusive component. In Section~\ref{S:jumps} we discuss the jump (and drift) behavior of affine processes with compact state space. In the final Section~\ref{S:ex} we provide several examples and use the results developed so far to identify all possible affine processes with compact state space in~$\R$ and~$\R^2$. Throughout the paper, $\N=\{0,1,2,\ldots\}$ denotes the natural numbers with zero included. For $d\in\N$, $\R^d$ denotes $d$-dimensional Euclidean space equipped with the usual inner product $\langle\cdot,\cdot\rangle$ and $\mathbb S^d$ denotes the set of symmetric $d\times d$ matrices. In particular, we have $\R^0=\{0\}$. The Dirac measure at a point $x\in\mathbb R^d$ is denoted by~$\delta_x$. Further unexplained notation follows \cite{jacod.shiryaev.03}.

\section{Setup and main results} \label{S:setup}

Fix a measurable space $(\Omega,\Fcal)$ equipped with a right-continuous filtration $\F=(\Fcal_t)_{t\ge0}$. Let~$E$ be a non-empty measurable subset of $\R^d$, $d\ge1$, and let $X=(X_t)_{t\ge0}$ be a c\`adl\`ag adapted process taking values in $E$. We set $X_{0-}=X_0$ by convention. Finally, let $(\P_x)_{x\in E}$ be a family of probability measures on $\Fcal$ such that for each $x\in E$,
\[
\text{$X$ is a $\P_x$-semimartingale with $\P_x(X_0=x)=1$.}
\]
We work with the following notion of an affine process, where $\mathfrak M^d_*$ denotes the vector space of all signed Radon measures on $\R^d\setminus\{0\}$.\footnote{That is, $\mathfrak M^d_*$ is the vector space of all set functions of the form $\mu=\mu_+-\mu_-$, where $\mu_+$ and $\mu_-$ are positive measures on $\R^d\setminus\{0\}$ that assign finite mass to compact subsets.}

\begin{definition} \label{D:affine}
We say that $X$ is {\em affine} if its differential characteristics are affine in the following sense: There exist affine maps
\[
b\colon\R^d\to\R^d, \qquad c\colon\R^d\to\S^d, \qquad F\colon\R^d\to\mathfrak M^d_*
\]
such that for each $x\in E$, $c(x)$ is positive semidefinite, $F(x,d\xi)$ is a positive measure on $\R^d\setminus\{0\}$ satisfying
\begin{equation} \label{eq:Levy}
\int_{\R^d\setminus\{0\}} (\|\xi\|^2\wedge 1) F(x,d\xi)<\infty,
\end{equation}
and the characteristics\footnote{Characteristics of semimartingales are defined in \cite[Definition II.2.6]{jacod.shiryaev.03}} $(B,C,\nu)$ of $X$ under $\P_x$ are given by
\begin{align*}
B_t &= \int_0^t b(X_{s-}) ds, \\
C_t &= \int_0^t c(X_{s-}) ds, \\
\nu(dt,d\xi) &= F(X_{t-},d\xi) dt.
\end{align*}
We refer to $(b,c,F)$ as the {\em triplet} of the affine process $X$.
\end{definition}

We are interested in the following condition on the state space:
\begin{equation*} 
\text{The state space $E$ is compact.} 
\end{equation*}
For notational simplicity, we assume throughout this paper that the affine span of $E$ is all of~$\R^d$. This does not restrict generality; see Remark~2.3 in~\cite{kellerressel/Mayerhofer:2015} for a discussion of this point.

\begin{remark}
The choice of truncation function does not affect Definition~\ref{D:affine}. Changing to a different truncation function yields a different function $b(x)$, but does not affect the affine property. Note also the abuse of terminology: We refer to the process $X$ as affine, although this is rather a property of the family $(\P_x)_{x\in E}$.
\end{remark}

Our convention $X_{0-}=X_0$ implies that the differential characteristics $b(X_{t-})$, $c(X_{t-})$ and $F(X_{t-},d\xi)$ are $\P_x$-almost surely equal to $b(x)$, $c(x)$, and $F(x,d\xi)$, respectively, when $t=0$. By continuity of $b$, $c$, and $F$, the differential characteristics are therefore right-continuous at $t=0$, which will be important when we apply the nonnegativity result Lemma~\ref{L:nonnegativity}.

The following is our main result regarding the structure of affine processes on compact state spaces.

\begin{theorem}\label{T:main}
Let $X$ be affine with triplet $(b,c,F)$, and suppose the state space $E$ is compact. Then there is no diffusion, $c=0$, and there exist an invertible affine map $T\colon\R^d\to\R^d$ as well as $k\in\N$ such that the following conditions hold:
\begin{enumerate}
\item\label{T:main:1} $T(E) \subseteq \mathbb N^k\times \mathbb R^{d-k}$,
\item\label{T:main:2} $Y = (T(X)_1,\dots, T(X)_k)$ is affine and Markov,
\item\label{T:main:3} $Z = (T(X)_{k+1},\dots, T(X)_d)$ can only jump when $Y$ jumps, that is,
\begin{equation} \label{T:main:3_1}
\{t\geq 0\colon \Delta Z(t)\neq 0\} \subseteq \{t\geq 0\colon \Delta Y(t)\neq 0\},
\end{equation}
and its jump characteristic is of the form $\nu^Z(dt,d\zeta)=F^Z(Y_{t-},d\zeta)dt$ for some affine map $F^Z\colon \R^k\to\mathfrak M^{d-k}_*$.
\item\label{T:main:4} the canonical coordinate projections $\pi_j\colon(x_1,\ldots,x_d)\mapsto x_j$ are normalized jump counters of the transformed process $(Y,Z)$ for $j=1,\ldots,k$.\footnote{Normalized jump counters are introduced in Definition~\ref{D:jc} below.}
\end{enumerate}
\end{theorem}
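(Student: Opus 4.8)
The plan is to build the proof on two pillars developed (presumably) in Sections~\ref{S:diff} and~\ref{S:jumps}: first, that compactness of $E$ forces $c\equiv 0$ on $E$, and second, a fine analysis of which jumps are admissible given that $F(x,\cdot)$ is affine in $x$ and $E$ is bounded. The vanishing of the diffusion is the content of Section~\ref{S:diff}, so I would simply cite it. For the combinatorial part, the key observation is that if $F$ is genuinely affine and the process jumps, then starting from $x\in E$ the point $x+\xi$ must again lie in $E$ for $F(x,\cdot)$-a.e.\ jump $\xi$; iterating, all points $x+n\xi$ for $n$ in some range stay in the compact set $E$, which severely constrains the possible jump vectors $\xi$ and the geometry of $E$ along those directions. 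I would extract from Section~\ref{S:jumps} the statement that, after a linear change of coordinates, every admissible jump direction is a lattice direction and that $E$ is contained in a coset of a lattice in those directions; this produces the affine map $T$ and the integer $k$, giving~\ref{T:main:1}.

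Next I would establish~\ref{T:main:4} and~\ref{T:main:2} together. Having arranged $T(E)\subseteq\N^k\times\R^{d-k}$, I claim each coordinate projection $\pi_j$, $j\le k$, is a normalized jump counter of $(Y,Z)$ in the sense of Definition~\ref{D:jc}: along the first $k$ coordinates the process moves only by jumps (no diffusion by $c=0$, and — crucially — the drift $b$ must preserve integrality, hence the drift in these coordinates vanishes, since a nonzero affine drift would immediately push the process off $\N^k$), and each jump changes $\pi_j$ by an integer amount. This is where I would need to argue carefully that the affine drift $b$, restricted to the first $k$ coordinates, is identically zero on $E$; the argument is that $Y$ is a pure-jump process of finite variation taking values in $\N^k$, and any absolutely continuous component of finite-variation $\N^k$-valued paths must be constant. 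Once $Y$ is identified as driven by jump counters whose compensator $F^Y(X_{t-},\cdot)$ is affine in $X_{t-}$, I would show the compensator in fact depends on $X_{t-}$ only through $Y_{t-}$: this follows because $F^Y$ is affine and the admissible jumps of $Y$ together with the jump-counter structure pin down the intensities as affine functions of the jump counts, i.e.\ of $Y$ itself. That gives the affine Markov property of $Y$, and $Y$ Markov follows from the intensities being a (deterministic affine) function of the current $Y$-state, invoking the standard martingale-problem well-posedness for finite/countable-state affine jump processes.

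For~\ref{T:main:3}, I would use that the last $d-k$ coordinates $Z$ sit in $\R^{d-k}$ with no lattice constraint, so the earlier jump analysis forces any jump of $Z$ to be ``tied'' to a jump of $Y$: more precisely, the jump measure $F(x,d\xi)$ decomposes so that projecting onto the $Z$-coordinates, every atom of positive $F$-mass that moves $Z$ must also move $Y$ (otherwise we would again get an unbounded orbit $x+n\xi$ in the $Z$-directions while staying in compact $E$, contradiction, unless those $\xi$ have zero $Z$-component). Hence $\{\Delta Z\neq 0\}\subseteq\{\Delta Y\neq 0\}$, which is~\eqref{T:main:3_1}, and the $Z$-jump characteristic is the pushforward of $F$, which is affine in $X_{t-}$; that it depends only on $Y_{t-}$ again follows because the $Z$-moving atoms are indexed by $Y$-jumps and their intensities are affine functions of $Y_{t-}$. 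The drift of $Z$ may be nonzero and affine in $(Y,Z)$ — the theorem does not claim otherwise — so nothing more is needed here. The main obstacle I anticipate is the step establishing that the drift vanishes in the $Y$-coordinates and that all the relevant affine maps ($F^Y$, $F^Z$, and the $Y$-intensities) factor through $Y$ alone rather than through the full state $X$; this requires combining the lattice/orbit-boundedness arguments from Section~\ref{S:jumps} with the affineness of the characteristics in a way that rules out any genuine dependence on $Z$. Everything else is bookkeeping: assembling $T$, relabelling coordinates, and invoking well-posedness of the martingale problem to upgrade ``affine characteristics depending only on $Y$'' to ``$Y$ is affine and Markov.''
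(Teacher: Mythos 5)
Your high-level architecture matches the paper's: $c=0$ is Theorem~\ref{T:diff=0} from Section~\ref{S:diff}, and the rest is a combinatorial analysis of the jump kernel (Theorem~\ref{T:JM}); your observations that $Y$ has vanishing drift because it lives in a discrete set, and that iterated jumps must stay in the compact set $E$, are both used in the paper. However, there is a substantive misconception and, downstream of it, a genuine gap. You assert that ``every admissible jump direction is a lattice direction and $E$ is contained in a coset of a lattice in those directions.'' This is false: in Example~\ref{ex:k1} the process jumps by $(-1,\zeta)$ with $\zeta$ uniformly distributed on $[0,1]$, so the jump vectors affinely span $\R^2$ and are not lattice-valued, yet $k=1$ there. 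The integrality lives in the \emph{dual} objects, not in the jump vectors: to each jump size $u\in S$ the paper associates a normalized jump counter, an affine functional $\psi_u$ with $\psi_u\ge0$ on $E$, $\psi_u(E)\subseteq\N$, and $\psi_u(x+u)=\psi_u(x)-1$, and $k$ is the dimension (modulo constants) of ${\rm span}\{\psi_u\colon u\in S\}$ -- which is why $Z$ may still jump, only in lockstep with $Y$. Defining $T$ via ``lattice directions'' would give the wrong $k$ and would not produce statement~\ref{T:main:3}.

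The gap proper is that the existence of these jump counters, and the factorization of $F$ through them, is exactly the step you wave at with ``the jump-counter structure pins down the intensities as affine functions of the jump counts'' -- but this is the technical heart of the whole theorem, not a consequence of affineness alone. Concretely, one must prove (Lemma~\ref{L:J2}) that for each $u\in S$ there is $\varepsilon>0$ with $F(x,A\cap B_\varepsilon(u))=\lambda_u(A)\,F(x,B_\varepsilon(u))$ for all $x$, i.e.\ that the two affine functions $F(\cdot,B_\varepsilon(u))$ and $F(\cdot,A\cap B_\varepsilon(u))$ have the same kernel; the paper's argument for this needs a quantitative separation of $E$ from lower-dimensional affine subspaces (Lemma~\ref{L:J1}) together with a delicate choice of $\varepsilon$ and a compactness contradiction. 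Only with this factorization in hand can one show that ${\rm span}\{F(\cdot,A)\colon A\}={\rm span}\{\psi_u\colon u\in S\}$ (Lemma~\ref{L:F1}), which is what makes $F$, and hence $F^Y$ and $F^Z$, factor through $\Psi(x)=Y$ rather than through the full state $x=(y,z)$ -- the very point you flag as your ``main obstacle.'' Your proposed route to~\eqref{T:main:3_1} (an unbounded orbit $x+n\xi$ if $\xi$ moves $Z$ but not $Y$) also secretly relies on this: without knowing that $\psi_\xi$ is an affine combination of $\psi_{u_1},\dots,\psi_{u_k}$, there is no reason the jump $\xi$ could be repeated indefinitely. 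So the plan identifies the right targets but omits the one argument on which all four conclusions depend.
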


\begin{remark}
Observe that $k$ may be zero in Theorem~\ref{T:main}. Then $Y$ is trivial since it takes values in $\R^0=\{0\}$, and \eqref{T:main:3_1} implies that $Z$ does not jump. Thus $X$ is in fact the (deterministic) solution of a linear ordinary differential equation.
\end{remark}

Theorem~\ref{T:main} combines the conclusions of Theorems~\ref{T:diff=0} and~\ref{T:JM}, whose statements and proofs are given in Sections~\ref{S:diff} and~\ref{S:jumps}, respectively. The proofs yield in some respects more detailed information than the theorem itself; see for instance Lemma~\ref{L:JC_prop} and Proposition~\ref{P:JC repr}. The usefulness of this added detail is illustrated in Section~\ref{S:ex}, where we discuss examples and classify all affine processes on compact state spaces in dimensions $d=1,2$. In particular, Theorem~\ref{t:jumps in 2d}, which treats the case where $E\subseteq\N^2$ is a finite set, has the following corollary:

\begin{theorem} \label{T:d=2}
Let $X=(X_1,X_2)$ be a $2$-dimensional affine process with finite and irreducible state space.\footnote{That $X$ has irreducible state space means that for every $x\in E$, $X$ has positive $\P_x$-probability of eventually reaching any other state $y\in E$.} Assume $X$ has no autonomous components in the sense that no nonzero linear combination $a_1X_1+a_2X_2$ of the components of $X$ is itself an affine process. Then, up to an invertible affine transformation, one has $E=\{x\in\N^2\colon x_1+x_2\le N\}$ for some $N\in\N$, and
\begin{align*}
F(x, \cdot) &= x_1\left( \lambda_1\delta_{(-1,0)} + \lambda_2\delta_{(-1,1)} \right) + x_2\left( \lambda_3\delta_{(0,-1)} + \lambda_4\delta_{(1,-1)} \right) \\
&\quad + (N-x_1-x_2)\left( \lambda_5\delta_{(1,0)} + \lambda_6\delta_{(0,1)} \right)
\end{align*}
for some $\lambda_1,\ldots,\lambda_6\in\R_+$.
\end{theorem}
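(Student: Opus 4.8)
The plan is to apply the structural results for finite state spaces in $\mathbb{N}^2$ (Theorem~\ref{t:jumps in 2d}, which we may assume) and then use the ``no autonomous component'' and irreducibility hypotheses to pin down the geometry of $E$ and the precise form of $F$. First I would invoke Theorem~\ref{T:main} together with the $d=2$ analysis: since $E$ is compact (finite), $c=0$, and after an invertible affine transformation $E\subseteq\mathbb{N}^2$ with each coordinate projection a normalized jump counter of $X$. So $X$ is a pure-jump process whose jumps are governed by an affine measure $F(x,d\xi)=\sum_i (\text{affine in }x)\,\delta_{\xi_i}$, with finitely many jump vectors $\xi_i\in\mathbb{Z}^2$ possible, and the intensities $x\mapsto F(x,\{\xi_i\})$ are affine and nonnegative on $E$.

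Next I would use the ``no autonomous component'' hypothesis to constrain which jump directions $\xi$ can occur. The key observation is that a linear functional $\ell(x)=a_1x_1+a_2x_2$ yields an affine process $\ell(X)$ precisely when the pushforward characteristics depend only on $\ell(x)$; in the pure-jump affine setting this forces, for each jump vector $\xi$ with $\langle a,\xi\rangle\ne 0$, that the intensity $F(\cdot,\{\xi\})$ be a function of $\ell(\cdot)$ alone (plus a balancing condition grouping jumps with equal $\langle a,\xi\rangle$). Ruling this out for every nonzero $(a_1,a_2)$ is what forbids ``degenerate'' configurations and forces the jump vectors to be exactly (a subset of) $\{(-1,0),(-1,1),(0,-1),(1,-1),(1,0),(0,1)\}$ with the stated intensity structure: a jump that decreases $x_1$ must be proportional to $x_1$ (so that the intensity vanishes on $\{x_1=0\}$, a face of $E$ that must be invariant), and similarly in the other coordinate and for the ``remaining capacity'' direction $N-x_1-x_2$. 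Here I would lean on the face-invariance/nonnegativity machinery (Lemma~\ref{L:nonnegativity} and the jump-counter representation Proposition~\ref{P:JC repr}) to argue that each affine intensity, being nonnegative on $E$ and vanishing where the corresponding jump would exit $E$, must be a nonnegative multiple of one of $x_1$, $x_2$, $N-x_1-x_2$.

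Then I would determine the shape of $E$. Since the admissible jump vectors all preserve the quantity $x_1+x_2$ modulo the ``capacity'' moves $(1,0),(0,1)$ (which increase it by one) and $(-1,0),(0,-1)$ (which decrease it by one), and since irreducibility forces $E$ to be exactly the set reachable from any point, the natural candidate is a triangle $E=\{x\in\mathbb{N}^2:x_1+x_2\le N\}$. Concretely: irreducibility plus the requirement that $F(x,\cdot)$ be supported on vectors keeping the process in $E$ means $E$ must be ``closed'' under all active jumps and contain no proper sub-collection that is itself closed; combined with the affine (hence convex-polytope-like) constraints on where intensities vanish, this identifies $E$, up to the affine transformation already used, as the lattice triangle, with $N$ the common value of $x_1+x_2$ on the face opposite the origin. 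Finally, reading off the six intensities from the three linear factors $x_1,x_2,N-x_1-x_2$ and the six possible directions gives the displayed formula for $F$, with $\lambda_1,\dots,\lambda_6\in\mathbb{R}_+$ by nonnegativity.

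The main obstacle, I expect, is the middle step: cleanly showing that the ``no autonomous component'' hypothesis eliminates \emph{all} other jump configurations (e.g.\ jumps like $(2,-1)$, $(-1,-1)$, $(1,1)$, or longer-range moves) and is not merely necessary but exactly carves out the six-direction family. This requires a careful case analysis of which affine intensity functions are simultaneously (a) nonnegative on a full-dimensional lattice polytope, (b) compatible with $E$ being invariant, and (c) not of the form that would make some linear combination $a_1X_1+a_2X_2$ autonomous; the bookkeeping of grouping jump vectors by their inner product with candidate functionals $a$ is where the argument is most delicate, and I would structure it around the finitely many possible jump directions allowed by compactness (Theorem~\ref{T:main}\eqref{T:main:1}) to keep the case analysis finite.
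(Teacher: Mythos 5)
Your overall route is the paper's: Theorem~\ref{T:d=2} is stated there as a corollary of Theorem~\ref{t:jumps in 2d}, obtained by first using Theorem~\ref{T:main} to reduce (after an affine transformation) to $E\subseteq\N^2$ with $\pi_1,\pi_2$ normalized jump counters, then selecting the right case of the trichotomy and reading off $F$ from Proposition~\ref{P:JC repr}. The one substantive comment is that you misallocate the roles of the hypotheses. The step you flag as the ``main obstacle'' --- eliminating jump vectors like $(2,-1)$, $(1,1)$, or longer moves, and showing each intensity is a nonnegative multiple of $x_1$, $x_2$, or $N-x_1-x_2$ --- is not where the hypotheses of Theorem~\ref{T:d=2} are needed at all: it is exactly the content of case~(iii) of Theorem~\ref{t:jumps in 2d} combined with Proposition~\ref{P:JC repr} and Lemma~\ref{L:JC_prop}, which you are allowed to assume, and it is derived there from the jump-counter combinatorics alone, not from non-autonomy. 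What the hypotheses actually do, and what your sketch leaves implicit, is: (a) force $k=2$ in Theorem~\ref{T:main} (if $k\le1$ then either $X$ is deterministic or $T(X)_1$ is an autonomous affine component); (b) exclude case~(i) of Theorem~\ref{t:jumps in 2d}, since the stochastically invariant layer $E_0$ is incompatible with irreducibility of a state space that affinely spans $\R^2$ (alternatively, $X_2$ would be autonomous there); (c) exclude case~(ii), where $X_1$ and $X_2$ are themselves affine; and (d) rule out the subcase of~(iii) with only two jump counters $\{\pi_1,\pi_2\}$, where $X_1+X_2$ is nonincreasing and hence either autonomous or incompatible with irreducibility, so that $\pi_0=N-\pi_1-\pi_2$ exists and supplies the $(1,0),(0,1)$ jumps. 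Finally, the claim that $E$ is the \emph{full} lattice triangle (not merely contained in it, which follows from $\pi_0(E)\subseteq\N$) genuinely requires irreducibility and is asserted rather than argued in your plan; this deserves at least a sentence showing that the communicating class containing a spanning set of $E$ under the admissible jumps exhausts $\{x\in\N^2\colon x_1+x_2\le N\}$.
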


The characteristic function of $X_t$ in Theorem~\ref{T:d=2} is given by
\[
\E_x[e^{{\bf i} \langle u,X_t\rangle}] = \Phi(u,t) \Psi_1(u,t)^{x_1}\Psi_2(u,t)^{x_2},
\]
where $\Phi(u,t)$ and $\Psi_i(u,t)$, $i=1,2$, are the solutions of the Riccati equations
\begin{align*}
\partial_t\Phi &= N\lambda_5(\Psi_1-1)\Phi + N \lambda_6(\Psi_2-1)\Phi, && \Phi(u,0)=1,\\
\partial_t\Psi_1 &= \lambda_1 - (\lambda_1+\lambda_2-\lambda_5-\lambda_6)\Psi_1 + \lambda_2\Psi_2 -\lambda_6\Psi_1\Psi_2 - \lambda_5\Psi_1^2, && \Psi_1(u,0)=e^{{\bf i}u_1}, \\
\partial_t\Psi_2 &= \lambda_3 - (\lambda_3+\lambda_4-\lambda_5-\lambda_6)\Psi_1 + \lambda_4\Psi_1 -\lambda_5\Psi_1\Psi_2 - \lambda_6\Psi_2^2, && \Psi_2(u,0)=e^{{\bf i}u_2}.
\end{align*}
This follows from the fact that $M_u(t) = \Phi(u,T-t)\Psi_1(u,T-t)^{X_1(t)}\Psi_2(u,T-t)^{X_2(t)}$ defines a martingale for any $T\geq 0$, as can be seen by applying It\^o's formula along with the definition of the characteristics of $X$. Observe that since $E\subset\N^2$, only integer powers appear in the above expressions.

Compare this to the classical case, where the $\Psi_i(u,t)$ are of exponential form and it is their exponents that satisfy (generalized) Riccati equations; see for instance \citet[Theorem 2.7]{duffie.al.03}. In particular, the $\Psi_i(u,t)$ are then necessarily nonzero. \cite{cuchiero.11} also works with $\Psi_i(u,t)$ of exponential form, but allows $\Phi(u,t)$, and hence the characteristic function of $X_t$, to become zero; see equation (1.4) and the subsequent remark in \cite{cuchiero.11}. Our situation with finite state space is different, and it turns out that the $\Psi_i(u,t)$ can in fact reach zero for certain arguments; see Proposition~\ref{p:1d} and the subsequent discussion.

We end this section by noting that any finite state Markov chain can be viewed as an affine process as follows. Let $d$ be the number of states of the Markov chain, let $q_{ij}$ be the intensity of transitioning from state $i$ to state $j$, and let $E=\{e_1,\ldots,e_d\}\subset\R^d$ consist of the canonical unit vectors in $\R^d$. The affine process with state space $E$ and triplet $(0,0,F)$ with $F(x,d\xi)=\sum_{i,j=1}^d e_i^\top x\, q_{ij}\delta_{e_j-e_i}(d\xi)$ then has the same law as the original Markov chain, under the identification of its state space with $E$. This construction leads to an affine process of potentially very large dimension~$d$. If one instead considers a fixed $d$, only some finite-state Markov chains can be viewed as $d$-dimensional affine processes. If $d=1$, the only such Markov chain is the birth--death process; see Proposition~\ref{p:1d}. We thank an anonymous referee for bringing our attention to these observations.

\section{Diffusion} \label{S:diff}

In this section we prove that an affine process with compact state space necessarily has no diffusion.

\begin{theorem} \label{T:diff=0}
Let $X$ be affine with triplet $(b,c,F)$, and suppose the state space $E$ is compact. Then there is no diffusion, i.e., $c=0$.
\end{theorem}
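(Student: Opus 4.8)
The plan is to exploit the fact that the diffusion coefficient $c(x)$ is affine, positive semidefinite on the compact set $E$, and — crucially — must remain so without forcing $X$ to leave $E$. The key geometric idea is that if $c(x_0) \neq 0$ for some $x_0 \in E$, then the process started at a suitably chosen point has a genuinely diffusive component in some direction $v$, i.e. $\langle v, c(x_0) v\rangle > 0$, and a Brownian-like motion in that direction will, with positive probability, push the process arbitrarily far along $\pm v$ in a short time. Since $E$ is compact (hence bounded), this should contradict the requirement that $X$ stays in $E$. The subtlety is that $c$ may vanish on the boundary, so one has to be careful about where the diffusion is active and whether the process can actually be forced into that region.

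The steps I would carry out are as follows. First, reduce to a one-dimensional statement: suppose $c \not\equiv 0$, pick $x_0 \in E$ and a unit vector $v$ with $\langle v, c(x_0)v\rangle =: \alpha > 0$, and consider the real-valued semimartingale $U_t = \langle v, X_t\rangle$. Its characteristics are inherited from those of $X$; in particular its quadratic variation of the continuous martingale part is $\int_0^t \langle v, c(X_{s-})v\rangle\, ds$, which is a continuous function of the state. Since $E$ is bounded, $U$ takes values in a bounded interval, say $[-R,R]$. Second, use the right-continuity of the characteristics at time $0$ (guaranteed by the convention $X_{0-}=X_0$ and continuity of $b,c,F$, as the excerpt emphasizes before Lemma~\ref{L:nonnegativity}) to argue that, under $\P_{x_0}$, for small $t$ the diffusive intensity $\langle v, c(X_{s-})v\rangle$ stays close to $\alpha > 0$ with high probability. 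Third, invoke a comparison or support-type argument: a semimartingale with a non-degenerate continuous martingale part and locally bounded drift and jump intensity has positive probability of its continuous part exceeding any prescribed bound over any positive time interval — more precisely, the continuous martingale part $U^c$ is a time-changed Brownian motion, and on the event that the clock $\int_0^t \langle v,c(X_{s-})v\rangle ds$ exceeds some $\varepsilon > 0$, $U^c_t$ can be as large as we like with positive probability. Combining this with bounds on the drift term $\int_0^t \langle v, b(X_{s-})\rangle ds$ (bounded since $b$ is continuous on compact $E$... careful: $b$ is affine on $\R^d$, bounded on $E$) and controlling the jump part, conclude that $U_t \notin [-R,R]$ with positive $\P_{x_0}$-probability, contradicting $X_t \in E$.

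The main obstacle is the possible degeneracy of $c$ on $\partial E$: even if $c(x_0) > 0$ in direction $v$, the process might instantly drift or jump to a region where $c$ vanishes, shutting off the diffusion before it can do any damage. To handle this I would argue that $c(x_0)\ne 0$ forces a whole neighborhood (within the affine hull, which is all of $\R^d$ by assumption) of $x_0$ in $E$ to have nonzero $c$ in direction $v$ — but this requires $x_0$ to be an interior point of $E$ relative to some lower-dimensional structure, which need not hold. A cleaner route, and the one I would ultimately pursue, is to localize: stop the process at the first exit time $\tau$ from a small ball $B(x_0,\rho)$ where $\langle v, c(\cdot)v\rangle \ge \alpha/2$ (such a ball exists by continuity of $c$ provided $x_0$ can be approached within $E$, which holds if $x_0$ is chosen in the closure appropriately — in fact one takes $x_0$ where the affine function $x\mapsto\langle v,c(x)v\rangle$ attains a positive value, and uses that $E$ has full affine span so $E$ is "fat" near generic points). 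On $[0,\tau]$ the diffusive clock grows at rate $\ge \alpha/2$, while drift and jumps are controlled, and $\tau$ itself is bounded below in probability; a Girsanov or explicit Brownian estimate then shows $U$ moves by more than the diameter of $B(x_0,\rho)$ before time $\tau$ with positive probability — a contradiction, since that would mean the process exits the ball through diffusion while simultaneously the clock hasn't run, forcing $|U_t|$ outside $[-R,R]$ only if $\rho$ is comparable to $R$. The details of getting the quantitative Brownian lower bound to beat the geometry is where the real work lies; everything else is routine semimartingale bookkeeping.
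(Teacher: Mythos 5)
Your strategy has a genuine gap at exactly the point you flag yourself. Escaping a small ball around $x_0$ on which $\langle v,c(\cdot)v\rangle\ge\alpha/2$ is not a contradiction: the process is free to leave that ball while remaining in $E$, and once it does, the diffusive clock $\int_0^t\langle v,c(X_{s-})v\rangle\,ds$ may simply stop running because $c$ degenerates elsewhere in $E$. This is the squared-Bessel/CIR phenomenon: $dU=\sqrt{U}\,dW$ has a non-degenerate diffusion coefficient at every interior point of $[0,\infty)$ and yet never crosses $0$; the clock stays bounded precisely because of the coupling between the clock and the path that the time-change argument ignores. A second-moment computation makes the failure quantitative: with $M$ the martingale part of $U=\langle v,X\rangle$, one only obtains $\E_{x_0}\bigl[\int_0^t\langle v,c(X_s)v\rangle\,ds\bigr]\le\E_{x_0}[M_t^2]\le(2R+Ct)^2$, which is perfectly consistent with $c(x_0)\ne0$. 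So no contradiction can be extracted from an interior point $x_0$ alone, and the closing sentences of your proposal (``forcing $|U_t|$ outside $[-R,R]$ only if $\rho$ is comparable to $R$'') are in effect an admission that the argument does not close.

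What is needed, and what the paper does, is to run the probabilistic argument only at boundary extremizers, where one-sidedness gives an immediate contradiction, and then to let the affine structure of $c$ propagate the conclusion inward. Concretely: if $\overline x$ maximizes $\langle u,\cdot\rangle$ over $E$, then $\langle u,\overline x-X\rangle$ is a nonnegative semimartingale started at $0$ under $\P_{\overline x}$, and Lemma~\ref{L:nonnegativity} (a local, time-$0$ statement, much weaker than a support theorem) forces $c(\overline x)u=0$; this is Lemma~\ref{L:c(x) 1}. Carath\'eodory's theorem upgrades this to the statement that $c$ is parallel to $\conv(E)$ (Lemma~\ref{L:c is parallel to K}). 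The remaining work is purely convex-analytic (Proposition~\ref{P:c=0}): an affine map $x\mapsto c(x)$ that is positive semidefinite on a compact convex body with nonempty interior and annihilates every normal direction must vanish identically; the proof splits $\R^d$ along $\ker c(0)$ and uses a farthest point $x\in N_K(x)$ to reach a contradiction. This last step is not cosmetic: the boundary condition only controls $c(\overline x)u$ for $u$ normal to $E$ at $\overline x$, not tangential directions, so an argument exploiting the affineness of $c$ is unavoidable. Any rescue of a more probabilistic route would still have to locate a boundary point and an outward direction along which $c$ does not degenerate, which is essentially the content of Proposition~\ref{P:c=0} anyway.
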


The proof occupies the remainder of this section, and we start with two auxiliary results. The first is an expression of the intuitive notion that there can be no diffusion perpendicularly to the boundary of $E$.

\begin{lemma} \label{L:c(x) 1}
Let the assumptions of Theorem~\ref{T:diff=0} be in force. Let $u\in\R^d$, $\overline x\in E$, and $\langle u,\overline x\rangle=\max_{x\in E}\langle u,x\rangle$. Then $c(\overline x)u=0$. Note that the set of possible maximizers $\overline x$ depends on the choice of $u$.
\end{lemma}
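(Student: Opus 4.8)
The plan is to project onto the direction $u$ and use that, started from a maximizer $\overline x$, the projected process can never move upward. Fix $u$ and $\overline x$ as in the statement, work under $\P_{\overline x}$, and set $Y_t = \langle u,\overline x\rangle - \langle u,X_t\rangle$. Since $X_t\in E$ for all $t\ge 0$ and $\langle u,\overline x\rangle=\max_{x\in E}\langle u,x\rangle$ (finite, as $E$ is compact), we have $Y_t\ge 0$ for every $t\ge 0$, while $Y_0=0$ because $\P_{\overline x}(X_0=\overline x)=1$. Moreover $Y$ is a one-dimensional semimartingale, being the image of the semimartingale $X$ under the affine map $x\mapsto\langle u,\overline x-x\rangle$.

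Next I would read off the differential characteristics of $Y$. Writing $X^c$ for the continuous local-martingale part of $X$ under $\P_{\overline x}$, the continuous local-martingale part of $Y$ is $-\langle u,X^c\rangle$, so the second characteristic of $Y$ is $C^Y_t=\langle Y^c,Y^c\rangle_t=\int_0^t u^\top c(X_{s-})u\,ds$; the first characteristic and the jump compensator of $Y$ are the corresponding images under $\xi\mapsto-\langle u,\xi\rangle$, but only $C^Y$ will be needed. Note that $C^Y$ is independent of the choice of truncation function. By the convention $X_{0-}=X_0$ and the continuity of $c$ (as recorded in the paragraph preceding the lemma), the differential second characteristic of $Y$ is right-continuous at $t=0$ with value $u^\top c(\overline x)u$.

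Now I would invoke the nonnegativity result Lemma~\ref{L:nonnegativity}: a real-valued semimartingale that starts at $0$, stays nonnegative, and whose differential characteristics are right-continuous at $t=0$ must have vanishing differential diffusion coefficient at $t=0$ (intuitively, any diffusion at the starting point would immediately push the process below $0$). Applied to $Y$ this gives $u^\top c(\overline x)u=0$. Since $c(\overline x)$ is positive semidefinite, writing $c(\overline x)=c(\overline x)^{1/2}c(\overline x)^{1/2}$ yields $\|c(\overline x)^{1/2}u\|^2=0$, hence $c(\overline x)^{1/2}u=0$ and therefore $c(\overline x)u=0$, which is the claim.

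I expect the only delicate point to be lining up the hypotheses of Lemma~\ref{L:nonnegativity} with the transformed process $Y$ — namely that $Y$ is indeed a nonnegative semimartingale vanishing at $0$ with characteristics right-continuous at $0$ — together with the routine bookkeeping for the characteristics of an affine image of $X$. Since the continuous-martingale part transforms linearly and is insensitive to the truncation function, this bookkeeping is straightforward, and the substance of the argument is entirely contained in the observation that "$Y\ge 0$ with $Y_0=0$" rules out any diffusion at the initial state.
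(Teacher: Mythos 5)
Your proposal is correct and follows essentially the same route as the paper: define the nonnegative semimartingale $\langle u,\overline x - X\rangle$ started at $0$ under $\P_{\overline x}$, read off its second differential characteristic $\langle u,c(X_{t-})u\rangle$ via the transformation rule for characteristics, apply Lemma~\ref{L:nonnegativity} at $t=0$, and conclude $c(\overline x)u=0$ from positive semidefiniteness. The only cosmetic difference is that the paper relies on the right-continuity of all three differential characteristics at $t=0$ (already recorded in Section~\ref{S:setup}) rather than just of $C^Y$, since that is how the hypotheses of Lemma~\ref{L:nonnegativity} are stated.
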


\begin{proof}
Define $f(x)=\langle u,\overline x-x\rangle$. Then under $\P_{\overline x}$ the process $Z=f(X)$ is a nonnegative semimartingale with $Z_0=0$. Its second differential characteristic is \cite[Proposition B.1]{kallsen.kruehner.es.15}
\[
\widetilde c_t = \langle \nabla f(X_{t-}),c(X_{t-})\nabla f(X_{t-})\rangle = \langle u, c(X_{t-})u\rangle, \qquad \widetilde c_0=\langle u,c(\overline x)u\rangle.
\]
Lemma~\ref{L:nonnegativity} implies $\widetilde c_0=0$, and hence $c(\overline x)u=0$.
\end{proof}

Let $\conv(E)$ denote the convex hull of $E$, which is again compact. Since the affine span of $E$, and hence of $\conv(E)$, is all of $\R^d$, we have
\[
\interior \conv(E) \ne\emptyset.
\]
The next result is an application of the \cite{caratheodory.07} theorem, allowing us to replace $E$ by $\conv(E)$ in Lemma~\ref{L:c(x) 1}. By definition, this means that $c$ is parallel to $\conv(E)$; see~\eqref{eq:parallel def}.

\begin{lemma} \label{L:c is parallel to K}
Let the assumptions of Theorem~\ref{T:diff=0} be in force. Then $c$ is parallel to $\conv(E)$.
\end{lemma}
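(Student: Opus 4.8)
The plan is to unwind the definition of ``parallel'' and then reduce the claim to Lemma~\ref{L:c(x) 1} by means of Carath\'eodory's theorem. First I would recall from~\eqref{eq:parallel def} that $c$ being parallel to $\conv(E)$ means the following: for every $x\in\conv(E)$ and every $u\in\R^d$ with $\langle u,x\rangle=\max_{y\in\conv(E)}\langle u,y\rangle$, one has $c(x)u=0$. Since a linear functional attains the same maximal value over $\conv(E)$ as over $E$, I would set $m=\max_{y\in E}\langle u,y\rangle=\max_{y\in\conv(E)}\langle u,y\rangle$, so that the hypothesis on $x$ reads $\langle u,x\rangle=m$.

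Next, fix such a pair $(x,u)$ and apply Carath\'eodory's theorem to write $x=\sum_{i=0}^d\lambda_i x_i$ with $x_i\in E$, $\lambda_i\ge 0$, and $\sum_{i=0}^d\lambda_i=1$. Because $\langle u,x_i\rangle\le m$ for every $i$ while $\sum_{i=0}^d\lambda_i\langle u,x_i\rangle=\langle u,x\rangle=m$, every index $i$ with $\lambda_i>0$ must satisfy $\langle u,x_i\rangle=m$; for those indices $x_i$ is a maximizer of $y\mapsto\langle u,y\rangle$ over $E$, and Lemma~\ref{L:c(x) 1} gives $c(x_i)u=0$.

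Finally I would combine these identities. Since $c$ is affine and $\sum_{i=0}^d\lambda_i=1$, it preserves this convex combination, so $c(x)=\sum_{i=0}^d\lambda_i c(x_i)$ and hence $c(x)u=\sum_{i:\,\lambda_i>0}\lambda_i c(x_i)u=0$. As $(x,u)$ was an arbitrary pair meeting the maximality condition, this is exactly the statement that $c$ is parallel to $\conv(E)$.

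I do not expect a genuine obstacle here: all of the analytic content already sits in Lemma~\ref{L:c(x) 1}, and what remains is bookkeeping with affine combinations. The one point deserving care is the reduction step itself — a maximizer of $\langle u,\cdot\rangle$ over $\conv(E)$ need not lie in $E$, which is precisely why Carath\'eodory's theorem is invoked rather than a bare appeal to extreme points, and why it is essential that $c$ is affine on all of $\R^d$ and not merely its values on $E$ that matter.
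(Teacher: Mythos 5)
Your proposal is correct and follows essentially the same route as the paper: both unwind the normal-cone condition, use that $\langle u,\cdot\rangle$ has the same maximum over $E$ as over $\conv(E)$, apply Carath\'eodory's theorem to write the maximizer as a convex combination of points of $E$ which must themselves be maximizers, invoke Lemma~\ref{L:c(x) 1} at each of them, and conclude by affinity of $c$. The only cosmetic difference is that you carry along possibly zero weights $\lambda_i$ while the paper discards them up front; this changes nothing.
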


\begin{proof}
Pick any nonzero $\overline x\in \conv(E)$ and any $u\in N_{\conv(E)}(\overline x)$. This means that $\langle u,\overline x\rangle=\max_{x\in \conv(E)}\langle u,x\rangle$. We must show that $c(\overline x)u=0$. By Carath\'eodory's theorem \cite[Theorem~17.1]{rockafellar.70}, $\overline x$ can be expressed as a convex combination of $d+1$ points in $E$. Thus there exist $k\le d+1$, $x_1,\ldots,x_k \in E$, and $\lambda_1,\ldots,\lambda_k \in (0,1)$ such that $\overline x=\sum_{i=1}^k\lambda_ix_i$. Since
\[
\max_i \langle u, x_i\rangle \ge \sum_{i=1}^k \lambda_i\langle u, x_i\rangle = \langle u,\overline x\rangle = \max_{x\in K}\langle u,x\rangle \ge \max_{x\in E}\langle u,x\rangle \ge \max_i \langle u,x_i\rangle,
\]
and since the $\lambda_i$ are strictly positive, one has $\langle u,x_i\rangle = \max_{x\in E}\langle u, x\rangle$ for all~$i$. Thus Lemma~\ref{L:c(x) 1} yields $c(x_i)u=0$ for all~$i$, whence, on taking convex combinations, $c(\overline x)u=0$.
\end{proof}

The following result is the key ingredient in the proof of Theorem~\ref{T:diff=0}. Its proof relies on notions and results from convex analysis that are developed in Section~\ref{APP:convex}.

\begin{proposition} \label{P:c=0}
Let $K$ be a compact convex subset of $\R^d$ with $\interior K\ne\emptyset$. Let $c:\R^d\to\S^d$ be an affine map parallel to~$K$ with $c(x)$ positive semidefinite for all $x\in K$. Then $c=0$.
\end{proposition}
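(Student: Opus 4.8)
The plan is to argue by induction on the dimension $d$. Recall (see~\eqref{eq:parallel def}) that ``$c$ parallel to $K$'' means precisely that $c(x)u=0$ whenever $x\in K$ and $u$ lies in the normal cone $N_K(x)$; equivalently, $\mathrm{range}\,c(x)\subseteq N_K(x)^\perp$, and for $x$ in the relative interior of a face $F$ of $K$ the subspace $N_K(x)^\perp$ is exactly the direction space $D_F$ of $\mathrm{aff}(F)$. The base case $d=0$ is trivial, since $\S^0=\{0\}$.

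For the inductive step I would assume the claim in all dimensions $<d$ and suppose, for contradiction, that $c\neq 0$. Since $K$ is the disjoint union of the relative interiors of its faces and $c$ is affine with $\mathrm{aff}(K)=\R^d$, $c$ fails to vanish identically on the relative interior of at least one face; choose such a face $F$ of minimal dimension $m$. By this minimality, $c$ vanishes on the relative interior of every proper subface of $F$, hence on $\operatorname{relbd}(F)$. For $x\in F$ the minimal face of $K$ through $x$ lies in $F$, so $\mathrm{range}\,c(x)\subseteq D_F$; as $c(x)$ is symmetric, it then kills $D_F^\perp$ and restricts to a self-adjoint operator $\widehat c(x)$ on $D_F$. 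Identifying $\mathrm{aff}(F)\cong\R^m$ and $D_F\cong\R^m$, the map $\widehat c\colon\R^m\to\S^m$ is affine, positive semidefinite on $F$, and parallel to $F$ (the normal cone of $F$ inside $\mathrm{aff}(F)$ is trivial on $\operatorname{relint}(F)$, while $\widehat c$ already vanishes on $\operatorname{relbd}(F)$); and $F$ is a compact convex subset of $\mathrm{aff}(F)$ with nonempty relative interior.

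If $m<d$, the inductive hypothesis gives $\widehat c\equiv 0$, hence $c(x)=0$ for every $x\in\operatorname{relint}(F)$ (where $\mathrm{range}\,c(x)\subseteq D_F$), contradicting the choice of $F$. So $m=d$, forcing $F=K$; then $c$ vanishes on $\operatorname{relbd}(K)=\partial K$. But $K=\conv(\mathrm{ext}\,K)$ by Minkowski's theorem, the extreme points lie in $\partial K$ and affinely span $\R^d$, so $c$ vanishes at $d+1$ affinely independent points and is therefore identically zero --- contradicting $c\neq 0$. This closes the induction.

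The point of routing the argument through a \emph{minimal} face is that it sidesteps the most naive reduction (restricting $c$ to a facet of $K$), which would fail when $K$ has no facet, e.g.\ when $K$ is a Euclidean ball. The real work is thus pushed into the convex-analytic facts that underlie the induction, which is where Section~\ref{APP:convex} comes in: the relative interiors of the faces of $K$ partition $K$; proper faces have strictly smaller dimension; faces are themselves compact convex sets; the normal cone at a point is orthogonal to the direction space of its minimal face (so ``parallel'' really does force $\mathrm{range}\,c(x)\subseteq D_F$); and $K=\conv(\mathrm{ext}\,K)$. Granting these, positive semidefiniteness is only carried passively down the induction, and the substance is the interplay between the affineness of $c$ and the face lattice of $K$. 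I expect the one point genuinely requiring care to be the reduction step itself: verifying that $\widehat c$ inherits \emph{all} the hypotheses in dimension $m$.
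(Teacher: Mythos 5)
There is a genuine gap in the inductive step, and it occurs exactly in the non-polyhedral case that you single out as the motivation for working with minimal faces. Your reduction rests on the claim that for $x$ in the relative interior of a face $F$ of $K$ one has $N_K(x)^\perp=D_F$, so that parallelism forces $\mathrm{range}\,c(x)\subseteq D_F$ and $c(x)$ restricts to a self-adjoint operator on $D_F$. Only the inclusion $D_F\subseteq N_K(x)^\perp$ is true in general; the reverse inclusion holds for polytopes but fails for smooth boundaries. Take $K$ the closed unit disk in $\R^2$ and $x$ a boundary point: the minimal face through $x$ is the extreme point $\{x\}$, so $D_F=\{0\}$, yet $N_K(x)=\R_+x$ is a single ray and $N_K(x)^\perp$ is the full tangent line. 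Parallelism at $x$ therefore only yields $c(x)x=0$ and says nothing about the tangential block of $c(x)$; your argument would nevertheless conclude $\mathrm{range}\,c(x)\subseteq\{0\}$, i.e.\ $c(x)=0$ at every extreme point, which is not implied by the hypotheses at that point alone (it is of course true in the end, but only as a consequence of the proposition being proved). Without this step the induction has nothing to pass down to $\mathrm{aff}(F)$, and the case $m<d$ collapses whenever $K$ has non-polyhedral faces. For polytopes your argument is fine, since there $\mathrm{span}\,N_K(x)=D_F^\perp$ for $x\in\operatorname{relint}F$.

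The paper avoids the face lattice entirely. After translating so that $0\in\interior K$, it splits $\R^d=V\oplus W$ with $W=\ker c(0)$, uses positive semidefiniteness on a ball around the origin to kill the $W$-row and $W$-column of the linear part of $c$, and then reduces to $\widehat K=V\cap K$ with $\widehat c(0)$ invertible (Lemma~\ref{L:K hat}). The punchline is Lemma~\ref{L:K unbounded}: taking $x\in\widehat K$ farthest from the origin gives $x\in N_{\widehat K}(x)$, hence $\widehat c(x)x=0$ and $\widehat c(-\lambda x)x=(1+\lambda)x$; choosing $\lambda>0$ with $-\lambda x\in\partial\widehat K$ and using symmetry of $\widehat c$ produces a supporting hyperplane through the origin, contradicting $0\in\interior\widehat K$ unless $\dim V=0$. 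That argument uses only normal cones at two specific points and is insensitive to whether the boundary is polyhedral, which is precisely the robustness your face-based induction lacks. If you want to rescue your approach, you would need a substitute for the step ``$c(x)$ kills $D_F^\perp$'' that exploits the affineness of $c$ globally rather than parallelism pointwise.
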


For the proof of this proposition it is convenient to adopt the following coordinate-free notation: For vector spaces $V$ and $W$, we let $S(V)$ denote the space of symmetric linear maps $V\to V$, and $L(V,W)$ the space of linear maps $V\to W$. Thus $S(V)\simeq\S^n$ and $L(V,W)\simeq\R^{m\times n}$, where $n=\dim V$ and $m=\dim W$.

\begin{proof}
After applying a translation, we may suppose that $0\in\interior K$. Then $c(x)$ is of the form $c(x)=c_0+\ell(x)$ for some $c_0\in\S^d_+$ and some linear map $\ell:\R^d\to\S^d$. Consider the orthogonal direct sum decomposition
\[
\R^d = V\oplus W,
\]
where $W=\ker c_0$ and $V=W^\perp$ is the range of $c_0$. With respect to this decomposition, $c(x)$ takes the form
\[
c(x) = c_0 + L(x) = \begin{pmatrix}
a_0	& 0 \\ 
0	& 0
\end{pmatrix}
+
\begin{pmatrix}
\ell_{11}(x) & \ell_{12}(x) \\
\ell_{21}(x) & \ell_{22}(x)
\end{pmatrix},
\]
where $a_0\in S(V)$ is positive definite and the maps $\ell_{11}:\R^d\to S(V)$, $\ell_{12}:\R^d\to L(W,V)$, and $\ell_{22}:\R^d\to S(W)$ are all linear.

Consider any $w\in W$. The map $x\mapsto\langle w,c(x)w\rangle=\langle w,\ell_{22}(x)w\rangle$ is linear and nonnegative in some open ball $B\subset K$ around the origin, and is thus identically zero. Since $c(x)$, and hence $\ell_{22}(x)$, is positive semidefinite for all $x\in B$, we deduce $\ell_{22}(x)=0$ for all $x\in\R^d$. Again by positive semidefiniteness of $c(x)$ for $x\in B$, this implies $\ell_{12}(x)=0$ for all $x\in B$, and hence for all $x\in\R^d$.

We now show that $\dim V=0$; this will imply that $c=\ell_{22}=0$ and thus complete the proof. To this end, define
\[
\widehat K = V\cap K,
\]
a compact convex subset of $V$ with $0\in\interior \widehat K$, where the interior is to be understood relative to $V$. Consider also the linear map
\[
\widehat c:V \to S(V), \qquad y \mapsto a_0 + \ell_{11}(y).
\]
We claim that $\widehat c$ is parallel to $\widehat K$. To see this, pick any $y\in\widehat K$, $v\in N_{\widehat K}(y)$. Lemma~\ref{L:K hat} then yields $v+w\in N_K(y)$ for some $w\in W$. Consequently, using that $c(x)w=0$ and that $c$ is parallel to~$K$ by assumption,
\[
\widehat c(y) v = c(y)(v+w) = 0.
\]
Thus $\widehat c$ is parallel to $\widehat K$ as claimed. Furthermore, $\widehat c$ is affine, positive semidefinite on $\widehat K$, and $\widehat c(0)=a_0$ is invertible. Since $\widehat K$ is compact, Lemma~\ref{L:K unbounded} implies $\dim V=0$, as required.
\end{proof}

The proof of Theorem~\ref{T:diff=0} is now straightforward.

\begin{proof}[Proof of Theorem~\ref{T:diff=0}]
Since $c$ is affine, the positive semidefiniteness of $c(x)$ for $x\in E$ in fact holds for all $x\in \conv(E)$. Moreover, $c$ is parallel to $\conv(E)$ by Lemma~\ref{L:c is parallel to K}. The result now follows from Proposition~\ref{P:c=0} with $K=\conv(E)$.
\end{proof}

\section{Jumps and drift} \label{S:jumps}

Assume that $X$ is affine with triplet $(b,c,F)$ and compact state space $E\subset\R^d$ whose affine span is all of $\R^d$. Since $c=0$ by Theorem~\ref{T:diff=0} we are left with $(b,0,F)$, and our goal is to describe its structure. This will be done through a sequence of intermediate results culminating with Theorem~\ref{T:JM} below.

It is convenient to introduce the set
\begin{equation}\label{eq:S}
S=\bigcup_{x\in E} \supp (F(x,\cdot)) \setminus \{0\},
\end{equation}
which can be thought of as the collection of all possible jump sizes of $X$. If $X$ does not jump at all, then $S=\emptyset$.

A key property of $F$, beyond its affine structure, is that $F(x,A)=0$ whenever $x\in E$ and $A\cap(E-x)=\emptyset$, which holds because $F(X_{t-},d\xi)dt$ is the jump characteristic of $X$. A more useful way to phrase this property is
\[
\text{$x\in E$, $A\subseteq\R^d$ open, $F(x,A)>0$} \quad\Longrightarrow\quad \text{$x+\xi\in E$ for some $\xi\in A$,}
\]
which we will use repeatedly without explicit mentioning. Together with compactness of $E$ this leads to the existence of {\em jump counters}, which we now define.

\begin{definition} \label{D:jc}
A {\em jump counter} corresponding to $u\in S$ is an affine function $\psi_u:\R^d\to\R$, not identically zero, such that $\psi_u\ge 0$ on $E$ and
\[
\text{$x\in E$ and $\psi_u(x)>0$} \quad\Longrightarrow\quad x+u\in E.
\]
We call $\psi_u$ {\em normalized} if $\psi_u(u)-\psi_u(0)=-1$.
\end{definition}

Jump counters are useful because they have rather strong implications for the structure of $E$. To see why, let $\psi_u$ be a jump counter corresponding to some vector $u\in S$. Each point $x\in E$ satisfies either $\psi_u(x)=0$ or $\psi_u(x)>0$. In the latter case, $x+u$ again lies in $E$. Then by the same token, either $\psi_u(x+u)=0$, or $x+2u\in E$. Iterating this argument and observing that the procedure must terminate in finitely many steps since $E$ is compact, we obtain, for every $x\in E$,
\begin{equation} \label{eq:psi_u step}
\{x,x+u,\ldots,x+nu\}\subseteq E \quad\text{and}\quad \psi_u(x+nu)=0 \quad \text{for some $n\in\N$.}
\end{equation}
Since $\psi_u$ is not identically zero and $E$ affinely spans $\R^d$, we have $\psi_u(x)>0$ for some $x\in E$. Thus \eqref{eq:psi_u step} implies that $\psi_u$ is strictly decreasing in direction $u$, so that $\psi_u(0)-\psi_u(u)>0$. Therefore, we can always replace $\psi_u$ by $\psi_u/(\psi_u(0)-\psi_u(u))$ to obtain a normalized jump counter.

\begin{lemma} \label{L:JC_prop}
Any normalized jump counter $\psi_u$ corresponding to some $u\in S$ satisfies
\begin{enumerate}
\item\label{L:JC_prop:1} $\psi_u(E)\subseteq\N$,
\item\label{L:JC_prop:2} $\psi_u(x)=1$ for some $x\in E$,
\item\label{L:JC_prop:3} $\ker \psi_u=\mathrm{aff}\{x_1,\dots,x_d\}$ for some $x_1,\dots,x_d\in E$.
\end{enumerate}
Moreover, if $\phi_u$ is any other normalized jump counter corresponding to $u$, then $\phi_u=\psi_u$.
\end{lemma}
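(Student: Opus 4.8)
The plan is to exploit two elementary facts about a normalized jump counter $\psi_u$: the exact recursion $\psi_u(x+u)=\psi_u(x)-1$ for every $x\in\R^d$, which follows at once from affineness together with the normalization $\psi_u(u)-\psi_u(0)=-1$; and the affine ``telescoping'' map $P(x)=x+\psi_u(x)\,u$, which I claim carries $E$ into the counter-independent set $E^\ast:=\{x\in E:x+u\notin E\}$ and has image exactly $\ker\psi_u$. Granting the recursion, part~\ref{L:JC_prop:1} is immediate: for $x\in E$, \eqref{eq:psi_u step} supplies $n\in\N$ with $x+nu\in E$ and $\psi_u(x+nu)=0$, and since $\psi_u(x+nu)=\psi_u(x)-n$ this gives $\psi_u(x)=n\in\N$. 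For part~\ref{L:JC_prop:2}, note that $\psi_u$ is affine, not identically zero, and $E$ affinely spans $\R^d$, so $\psi_u$ is not identically zero on $E$; choosing $x_0\in E$ with $\psi_u(x_0)=n\ge 1$ (using part~\ref{L:JC_prop:1}), the chain in \eqref{eq:psi_u step} contains the point $x_0+(n-1)u\in E$, at which $\psi_u$ equals $1$.

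For part~\ref{L:JC_prop:3}, I would proceed in three short moves. First, if $x\in E^\ast$ then the jump counter property ($\psi_u(x)>0\Rightarrow x+u\in E$) together with $\psi_u\ge 0$ on $E$ forces $\psi_u(x)=0$; hence $E^\ast\subseteq\ker\psi_u$. Second, $P$ maps $E$ into $E^\ast$: given $x\in E$ with $\psi_u(x)=n\in\N$, the chain shows $x+nu\in E$, while $x+(n+1)u\notin E$ since otherwise $\psi_u(x+(n+1)u)=-1$ would contradict $\psi_u\ge 0$ on $E$. Third, a one-line computation gives $\psi_u(P(x))=\psi_u(x)\bigl(1+(\psi_u(u)-\psi_u(0))\bigr)=0$, so $\mathrm{Im}\,P\subseteq\ker\psi_u$; conversely, the linear part of $P$ is the map $x\mapsto x+\ell(x)u$ with $\ell$ the linear part of $\psi_u$, and since $\ell(u)=-1$ this map has kernel exactly $\R u$, so $\mathrm{Im}\,P$ has dimension $d-1$, which equals the dimension of $\ker\psi_u$ (a hyperplane, nonempty by part~\ref{L:JC_prop:2}); therefore $\mathrm{Im}\,P=\ker\psi_u$. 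Since $P$ is affine and $E$ affinely spans $\R^d$, we conclude $\ker\psi_u=\mathrm{Im}\,P=P(\mathrm{aff}\,E)=\mathrm{aff}\,P(E)\subseteq\mathrm{aff}\,E^\ast\subseteq\ker\psi_u$, so $\mathrm{aff}\,E^\ast=\ker\psi_u$; being $(d-1)$-dimensional, $E^\ast$ must contain $d$ affinely independent points $x_1,\dots,x_d$ with $\mathrm{aff}\{x_1,\dots,x_d\}=\ker\psi_u$, and these lie in $E$.

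For the final (uniqueness) assertion, let $\phi_u$ be another normalized jump counter for $u$. The first move of the previous paragraph applies verbatim to $\phi_u$, giving $E^\ast\subseteq\ker\phi_u$, hence $\ker\psi_u=\mathrm{aff}\,E^\ast\subseteq\ker\phi_u$; since parts~\ref{L:JC_prop:1}--\ref{L:JC_prop:2} apply equally to $\phi_u$, its kernel is also a hyperplane, and therefore $\ker\psi_u=\ker\phi_u$. Two affine functions sharing the same kernel hyperplane are proportional, so $\phi_u=\lambda\psi_u$ for some $\lambda\ne 0$; evaluating the normalization, $-1=\phi_u(u)-\phi_u(0)=-\lambda$, so $\lambda=1$ and $\phi_u=\psi_u$.

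I expect the crux to be part~\ref{L:JC_prop:3}: the key realization is that the telescoping map $P$ is affine, that it lands in the counter-\emph{independent} ``top'' set $E^\ast$, and that---thanks to the rank-one structure of its linear part together with $\ell(u)=-1$---its image fills the entire hyperplane $\ker\psi_u$. Once this is established, part~\ref{L:JC_prop:3} and the uniqueness statement follow immediately, the latter precisely because $E^\ast$ makes no reference to the particular counter. Everything else reduces to bookkeeping with affine functions and the already-proven relation \eqref{eq:psi_u step}.
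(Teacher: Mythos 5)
Your proof is correct and follows essentially the same route as the paper: your telescoping map $P(x)=x+\psi_u(x)u$ is exactly the paper's projection $\pi$ along $u$ onto $\ker\psi_u$, your set $E^\ast$ coincides with $E\cap\ker\psi_u$, and the uniqueness argument (points of $E$ that cannot jump by $u$ must lie in $\ker\phi_u$, then compare normalizations) is the same. The only cosmetic difference is that you verify surjectivity of $P$ onto $\ker\psi_u$ via the rank of its linear part, where the paper deduces the affine span of $\pi(E)$ directly from that of $E$.
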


\begin{proof}
For any $x\in E$ and with $n\in\N$ as in \eqref{eq:psi_u step} we have
\[
\psi_u(x) = \psi_u(x)-\psi_u(x+nu) = \psi_u(0) - \psi_u(nu) = n (\psi_u(0) - \psi_u(u)) = n,
\]
where the affine property of $\psi_u$ is used in the second and third equalities. This yields \ref{L:JC_prop:1}. Next, with $x\in E$ such that $\psi_u(x)>0$, a similar calculation gives $\psi_u(x+(n-1)u)=1$ which proves \ref{L:JC_prop:2}. We now argue \ref{L:JC_prop:3}. Since $\psi_u$ is strictly decreasing along $u$, the intersection $(x+\R u)\cap\ker\psi_u$ contains exactly one element, where we write $x+\mathbb Ru = \{ x+tu: t\in\mathbb R\}$. The projection along $u$ onto $\ker\psi_u$ is therefore given by
\[
\pi:\R^d\to\ker\psi_u, \quad x\mapsto (x+\R u)\cap\ker\psi_u,
\]
where we identify the intersection on the right-hand side with the single element it contains.
In particular, for any $x\in E$ and with $n\in\N$ as in \eqref{eq:psi_u step} we have $\pi(x)=x+nu \in E$ for some $n\in\N$, so that $\pi(E)\subseteq E$. Since the affine span of $E$ is all of $\R^d$, the image $\pi(E)$ affinely spans $\ker\psi_u$. Thus $E$ contains $d$ points $x_1,\ldots,x_d$ whose affine span is $\ker\psi_u$, proving \ref{L:JC_prop:3}.

It remains to prove the uniqueness statement. For any $x\in E\cap\ker\psi_u$ we have $\psi_u(x+u)=\psi_u(x+u)-\psi_u(x)=-1$, and hence $x+u\notin E$. Therefore $\phi_u(x)=0$ by the definition of a jump counter. Letting $x_1,\ldots,x_d\in E$ affinely span $\ker\psi_u$, we obtain
\[
\ker \psi_u = \mathrm{aff}\{x_1,\dots,x_d\} \subseteq \ker \phi_u.
\]
Thus there exists a constant $\lambda$ such that $\phi_u = \lambda \psi_u$, and since both $\psi_u$ and $\phi_u$ are normalized we get $1 = \phi_u(0)-\phi_u(u) = \lambda(\psi_u(0)-\psi_u(u)) = \lambda$.
\end{proof}

In Proposition~\ref{P:JC repr} below we obtain the existence of normalized jump counters and show that they have additional properties. The proof uses the following two lemmas.

\begin{lemma}\label{L:J1}
Let $\Wcal^{d-1}$ denote the set of all affine subspaces $W\subseteq\R^d$ with $\dim W\le d-1$. Then
\[
\inf_{W\in\Wcal^{d-1}} \sup_{x\in E} d(x,W) > 0,
\]
where $d(x,W)=\inf\{\|x-y\|\colon y\in W\}$ is the distance from $x$ to $W$.
\end{lemma}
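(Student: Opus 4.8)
The plan is to argue by contradiction via a compactness argument on the collection of hyperplane directions. Suppose the infimum is zero. Then there is a sequence of affine subspaces $W_n\in\Wcal^{d-1}$ with $\sup_{x\in E}d(x,W_n)\to 0$. Each $W_n$ of dimension at most $d-1$ is contained in a genuine hyperplane, so without loss of generality we may take $W_n=\{x\colon\langle u_n,x\rangle=\alpha_n\}$ for unit vectors $u_n\in\R^d$ and scalars $\alpha_n\in\R$. Since $E$ is compact and nonempty, fixing any point $x_0\in E$ shows that the $\alpha_n$ stay bounded: indeed $|\alpha_n-\langle u_n,x_0\rangle|=d(x_0,W_n)\le\sup_{x\in E}d(x,W_n)\to 0$, so $\alpha_n$ lies in a bounded set. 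Passing to a subsequence, $u_n\to u$ with $\|u\|=1$ and $\alpha_n\to\alpha$, and the limiting hyperplane is $W=\{x\colon\langle u,x\rangle=\alpha\}$.

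The key step is then to show that $E\subseteq W$, which contradicts the standing assumption that the affine span of $E$ is all of $\R^d$ (a hyperplane is a proper affine subspace). For any $x\in E$ we have $d(x,W_n)=|\langle u_n,x\rangle-\alpha_n|$ since $u_n$ is a unit normal, and this tends to $0$ uniformly over $x\in E$ by hypothesis. Passing to the limit, $|\langle u,x\rangle-\alpha|=\lim_n|\langle u_n,x\rangle-\alpha_n|=0$ for every $x\in E$, using that $\langle u_n,x\rangle\to\langle u,x\rangle$ uniformly on the bounded set $E$ (or simply pointwise, which suffices here). Hence $\langle u,x\rangle=\alpha$ for all $x\in E$, i.e.\ $E\subseteq W$, and since $W$ is a hyperplane this contradicts $\mathrm{aff}(E)=\R^d$. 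Therefore the infimum is strictly positive.

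I expect the only mildly delicate point to be the reduction to unit-normal hyperplanes and the extraction of a convergent subsequence of normals: one must observe that every affine subspace of dimension $\le d-1$ sits inside some hyperplane (so replacing $W_n$ by such a hyperplane only decreases $\sup_{x\in E}d(x,W_n)$, which is fine for a proof by contradiction), that the unit normals live on the compact sphere $\mathbb S^{d-1}$, and that the offsets $\alpha_n$ are automatically bounded because $E$ is bounded and some point of $E$ is at vanishing distance from $W_n$. Everything after that is routine continuity. No analytic subtlety beyond sequential compactness of the sphere is involved.
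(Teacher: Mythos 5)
Your proof is correct and follows essentially the same route as the paper: reduce to hyperplanes with unit normals, use compactness of the sphere together with boundedness of the offsets (which you correctly derive from $E$ being bounded and some point of $E$ being close to $W_n$), and conclude that a vanishing supremum would force $E$ into a proper affine subspace, contradicting $\mathrm{aff}(E)=\R^d$. The paper phrases this as attainment of the infimum at some $(\overline u,\overline y)\in\Scal^{d-1}\times K$ rather than as a sequential compactness argument by contradiction, but the content is identical.
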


\begin{proof}
For any $W\in\Wcal^{d-1}$ and $x\in\R^d$ there exist $y\in W$ and $u\in (W-y)^\perp$ with $\|u\|=1$ such that $d(x,W)=|\langle u,x-y\rangle|$. Thus
\[
\inf_{W\in\Wcal^{d-1}} \sup_{x\in E} d(x,W) = \inf\left\{ \sup_{x\in E} |\langle u,x-y\rangle| \colon (u,y) \in \Scal^{d-1}\times \R^d \right\},
\]
where $\Scal^{d-1}$ is the unit sphere in $\R^d$. Since $E$ is compact, the map $(u,y)\mapsto\sup_{x\in E} |\langle u,x-y\rangle|$ is continuous, and it suffices to let $y$ range over a compact subset $K\subseteq\R^d$. Thus the infimum is attained for some $(\overline u,\overline y)\in\Scal^{d-1}\times K$, so that
\[
\inf_{W\in\Wcal^{d-1}} \sup_{x\in E} d(x,W) = \sup_{x\in E} |\langle \overline u,x-\overline y\rangle|.
\]
The right-hand side is strictly positive since the affine span of $E$ is all of $\R^d$.
\end{proof}

\begin{lemma}\label{L:J2}
For every $u\in S$ there exist a constant $\varepsilon>0$ and a probability measure $\lambda$ on $\R^d$ such that
\begin{equation}\label{L:J2:1}
F(\cdot, A\cap B_{\varepsilon}(u)) = \lambda(A) F(\cdot, B_{\varepsilon}(u)) \quad\text{for all measurable $A\subseteq\R^d$},
\end{equation}
where $B_{\varepsilon}(u)$ denotes the open ball with radius $\varepsilon$ centered at $u$. Furthermore, if $x\in E$ and $F(x, B_{\varepsilon}(u))>0$, then $x+u\in E$. In particular, $\phi_u=F(\cdot, B_{\varepsilon}(u))$ is a jump counter.
\end{lemma}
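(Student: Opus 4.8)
The plan is to exploit the affine structure of $F$ to disentangle the jump-size direction from the spatial variable on a sufficiently small ball around $u$. Since $F\colon\R^d\to\mathfrak M^d_*$ is affine, we may write $F(x,\cdot)=F_0(\cdot)+\sum_{j=1}^d x_j F_j(\cdot)$ for fixed signed Radon measures $F_0,\dots,F_d\in\mathfrak M^d_*$. Fix $u\in S$. Because $u\in\supp F(x,\cdot)$ for at least one $x\in E$, we have $F(x,B_\varepsilon(u))>0$ for that $x$ and every $\varepsilon>0$; what we need is to choose $\varepsilon$ small enough that, on $B_\varepsilon(u)$, the \emph{whole family} $\{F(x,\cdot)\colon x\in E\}$ is governed by a single scalar. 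The first step is therefore to show that there exists $\varepsilon>0$ and a fixed finite positive measure $\rho$ on $B_\varepsilon(u)$ such that $F(x,\cdot)|_{B_\varepsilon(u)}=\psi(x)\,\rho$ for some affine $\psi\colon\R^d\to\R$, $\psi\ge 0$ on $E$. To obtain this I would argue by contradiction: if no such $\varepsilon$ works, then for $\varepsilon_n\downarrow 0$ the restrictions of $F_0,F_1,\dots,F_d$ to $B_{\varepsilon_n}(u)$ are not all proportional to a common measure, so one can find, for each $n$, two states $x^{(n)},y^{(n)}\in E$ and a measurable $A_n\subseteq B_{\varepsilon_n}(u)$ with $F(x^{(n)},A_n)F(y^{(n)},B_{\varepsilon_n}(u)\setminus A_n)\neq F(x^{(n)},B_{\varepsilon_n}(u)\setminus A_n)F(y^{(n)},A_n)$. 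The point is that each $F(x,\cdot)$ is a positive measure and the set $E-x$ must, near $u$, be reachable — which is exactly where Lemma~\ref{L:J1} enters.

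The heart of the matter, and the step I expect to be the main obstacle, is the geometric constraint forcing $\rho$ effectively to sit on the affine hyperplane structure near $u$. Here is the mechanism. For each $x\in E$ with $F(x,B_\varepsilon(u))>0$, the reachability property (stated just before Definition~\ref{D:jc}) gives a point $x+\xi\in E$ with $\xi\in B_\varepsilon(u)$; as $\varepsilon\downarrow 0$ these approximate $x+u$. To upgrade ``$x+\xi\in E$ for some $\xi$ near $u$'' to ``$x+u\in E$'', I would combine this with the fact, extracted from the decomposition $F(x,\cdot)=\sum x_jF_j$, that the map $x\mapsto F(x,B_\varepsilon(u))$ is affine, together with Lemma~\ref{L:J1}: the latter says $E$ cannot be squeezed into any hyperplane-neighborhood, so the set $\{x\in E\colon F(x,B_\varepsilon(u))>0\}$ is ``fat'' enough that its translate by $B_\varepsilon(u)$ being contained in the compact set $E$ forces, in the limit $\varepsilon\to 0$, the clean inclusion $\{x\in E\colon \phi_u(x)>0\}+u\subseteq E$ where $\phi_u:=F(\cdot,B_\varepsilon(u))$. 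Concretely: pick $\varepsilon$ so small that $2\varepsilon$ is below the constant $\delta_0:=\inf_{W}\sup_{x\in E}d(x,W)$ from Lemma~\ref{L:J1} divided by a dimensional constant; then any two jump vectors $\xi,\xi'\in B_\varepsilon(u)$ witnessing reachability from two different states must agree ``modulo the directions $E$ genuinely spans,'' and a compactness/continuity argument pins the common value to $u$ itself.

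Once $\varepsilon$ is fixed as above, the conclusions follow quickly. Set $\phi_u(x)=F(x,B_\varepsilon(u))$; this is affine (from the decomposition of $F$), nonnegative on $E$, and not identically zero (since $F(x,B_\varepsilon(u))>0$ for some $x\in E$ by $u\in S$). For the product structure \eqref{L:J2:1}, define $\rho(A)=F(x_0,A\cap B_\varepsilon(u))/F(x_0,B_\varepsilon(u))$ for a fixed $x_0$ with $\phi_u(x_0)>0$; the first step shows $F(x,\cdot)|_{B_\varepsilon(u)}$ is proportional to $F(x_0,\cdot)|_{B_\varepsilon(u)}$ for all $x$ with a nonzero value, and for $x$ with $\phi_u(x)=0$ the positive measure $F(x,\cdot)$ must vanish on $B_\varepsilon(u)$ as well (a positive measure of total mass zero), so \eqref{L:J2:1} holds with $\lambda=\rho$, a probability measure. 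Finally, the reachability claim ``$x\in E$, $F(x,B_\varepsilon(u))>0\Rightarrow x+u\in E$'' is precisely the inclusion established in the geometric step, and this says $\phi_u$ satisfies the defining implication of Definition~\ref{D:jc}, so $\phi_u$ is a jump counter corresponding to $u$. This completes the proof.
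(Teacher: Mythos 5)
Your high-level strategy is the right one --- show that all the measures $F(x,\cdot)$ restricted to a small ball around $u$ are proportional to a single fixed measure, then combine this with reachability and closedness of $E$ to get $x+u\in E$ --- but both substantive steps are asserted rather than proved, and the mechanisms you sketch for them do not work. For the proportionality step, note first that proportionality of the two affine functions $\phi_A=F(\cdot,A\cap B_\varepsilon(u))$ and $\phi_u=F(\cdot,B_\varepsilon(u))$ with $0\le\phi_A\le\phi_u$ on $E$ is \emph{not} automatic for affine nonnegative functions (take $E=[0,1]^2$, $\phi_u(x)=x_1+x_2$, $\phi_A(x)=x_1$), so something specific must force $\ker\phi_u\subseteq\ker\phi_A$. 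Your argument consists of setting up a contradiction (``one can find $x^{(n)},y^{(n)}$ and $A_n$ with a non-proportionality witness'') and then saying that this is where Lemma~\ref{L:J1} enters --- but no contradiction is ever derived. The actual mechanism is a chain argument: if $V=\ker\phi_u\cap\ker\phi_A$ had codimension at least $2$, then $W=V+\R u$ would be a proper affine subspace, Lemma~\ref{L:J1} would give $x_0\in E$ at distance at least $\delta$ from $W$, and iterating the reachability property (each jump lies within $\varepsilon$ of $u$, and $W$ is invariant under translation by $u$, so the chain never approaches $W$) produces $N$ points whose total displacement exceeds $\mathrm{diam}(E)$ once $\varepsilon$ is chosen of order $\delta/N$ with $N>\mathrm{diam}(E)/\|u\|$. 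One also needs a separate compactness argument to show that $V\ne\emptyset$ in the first place: a chain of approximate $u$-jumps must terminate at a point where $\phi_u\le 0$, and positivity of $F$ then forces $\phi_A=\phi_u=0$ there. None of this appears in your write-up.

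Second, your ``geometric step'' for upgrading ``$x+\xi\in E$ for some $\xi\in B_\varepsilon(u)$'' to ``$x+u\in E$'' is not a valid argument: the claim that two witnesses $\xi,\xi'$ must agree ``modulo the directions $E$ genuinely spans'' and that compactness ``pins the common value to $u$'' has no content. For a fixed $\varepsilon$, knowing $F(x,B_\varepsilon(u))>0$ only yields \emph{some} $\xi\in B_\varepsilon(u)$ with $x+\xi\in E$, and nothing in your sketch forces $\xi$ to be close to $u$, let alone equal to it. The correct route uses the already-established identity \eqref{L:J2:1} with $A=B_{1/n}(u)$: since $u\in\supp F(x_0,\cdot)$ for some $x_0\in E$, one gets $\lambda(B_{1/n}(u))>0$ for all large $n$, hence $F(x,B_{1/n}(u))>0$ whenever $\phi_u(x)>0$; reachability then provides $u_n\to u$ with $x+u_n\in E$, and closedness of $E$ concludes. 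In particular the logical order matters: the proportionality must be proved first and is precisely the ingredient that makes the ``$x+u\in E$'' conclusion available, whereas your proposal treats the latter as an independent geometric fact. As written, the proposal has genuine gaps at both key steps.
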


\begin{proof}
Define the diameter of $E$ by $\mathrm{diam}(E)=\sup\{\|x-y\|\colon x,y\in E\}>0$.  Let $N\in \mathbb N$ such that $N>(\mathrm{diam}(E)+1)/\|u\|$ and pick $\delta>0$ such that
\[
\delta < \inf_{W\in\Wcal^{d-1}} \sup_{x\in E} d(x,W),
\]
where $\Wcal^{d-1}$ denotes the set of all affine subspaces $W\subseteq\R^d$ with $\dim W\le d-1$; this is possible by Lemma~\ref{L:J1}. Set
\[
\varepsilon = \frac{\delta\wedge 1}{N} \wedge \frac{\|u\|}{2} > 0,
\]
and define the affine functions $\phi_u = F(\cdot,B_\varepsilon(u))$ and $\phi_A = F(\cdot, A\cap B_\varepsilon(u))$ for any measurable subset $A\subseteq\R^d$. These functions are finite-valued since $B_\varepsilon(u)$ is bounded away from the origin. Consider the affine subspace
\[
V = \ker \phi_u \cap \ker \phi_A.
\]
 We claim that
\begin{equation} \label{eq:L:J1:1}
\text{$V\ne\emptyset$ and $V\ne\R^d$.}
\end{equation}
To see this, note that $u\in\supp F(x_0,\cdot)$ for some $x_0\in E$, and hence $\phi_u(x_0) = F(x_0,B_{\varepsilon}(u))>0$. Thus there exists $x_1\in E$ with $x_1-x_0\in B_\varepsilon(u)$. Then, recursively, if $x_j\in E$ satisfies $\phi_u(x_j)>0$, we find $x_{j+1}\in E$ with $x_{j+1}-x_j\in B_\varepsilon(u)$. Note that $\|x_j-x_0\|\ge j(\|u\|-\varepsilon)\ge j\|u\|/2$. Therefore by compactness of $E$ there is a maximal $j$ such that $x_j\in E$ and $\phi_u(x_j)>0$, and for this $j$ we have $\phi_u(x_{j+1})\le0$. But $F(x,\cdot)$ is a positive measure for all $x\in E$, so we deduce $0\le\phi_A(x_{j+1})\le\phi_u(x_{j+1})\le0$. This completes the proof of~\eqref{eq:L:J1:1}.

Next, we claim that
\begin{equation} \label{eq:L:J1:2}
\ker\phi_u \subseteq \ker\phi_A.
\end{equation}
If $\phi_A\equiv0$, this certainly holds. Otherwise \eqref{eq:L:J1:1} implies $\dim V\le d-1$ and, in case of equality, $\ker\phi_u=\ker\phi_A$. It remains to exclude the possibility that $\dim V \le d-2$, so we assume for contradiction that this holds. Then the affine subspace
\[
W = V + \R u
\]
satisfies $\dim W\le d-1$. By definition of $\delta$, we can then find $x_0\in E$ such that $d(x_0,W)\ge\delta$. In particular $x_0\notin W$, whence either $\phi_u(x_0)>0$ or $\phi_A(x_0)>0$ (or both). Thus there exists $x_1\in E$ with $x_1-x_0\in B_\varepsilon(u)$, and hence
\[
d(x_1,W) = d(x_1-u,W) \ge d(x_0,W) - d(x_0,x_1-u) \ge \delta - \varepsilon \ge \delta\left(1 - \frac1N\right).
\]
Again we proceed recursively: If $x_j\in E$ satisfies $ d(x_j,W)\ge \delta (1-j/N)>0$, we find $x_{j+1}\in E$ with $x_{j+1}-x_j\in B_\varepsilon(u)$ and $ d(x_{j+1},W)\ge \delta(1-(j+1)/N)$. Consequently,
\[
\|x_N - x_0\| = \| N u + \sum_{j=0}^{N-1} (x_{j+1} - x_j - u) \| \ge N\|u\| - N \varepsilon > \mathrm{diam}(E),
\]
a contradiction. Thus~\eqref{eq:L:J1:2} is proved.

Next, \eqref{eq:L:J1:2} implies that $\phi_A = \lambda(A) \phi_u$ for some constant $\lambda(A)$ that depends on $A$, which proves \eqref{L:J2:1}. The fact that $\lambda$ is a probability measure follows by inspecting \eqref{L:J2:1} at a point $x\in E$ for which $F(x,B_\varepsilon(u))=\phi_u(x)>0$.

Finally, to prove the last statement, consider $x\in E$ such that $F(x, B_{\varepsilon}(u))>0$. Define $A_n=B_{n^{-1}\wedge\varepsilon}(u)$, so that
\[
F(\cdot, A_n) = \lambda(A_n)F(\cdot,B_\varepsilon(u)) \quad\text{for all large $n$.}
\]
Since $u\in S$, the left-hand side is not identically zero, and so $\lambda(A_n)>0$ for all large $n$. Evaluating at $x$ then yields $F(x,A_n)>0$, and thus there exist $u_n\in A_n$ such that $x+u_n\in E$. Since $u_n\to u$ and $E$ is closed, it follows that $x+u\in E$ as claimed.
\end{proof}

The following proposition shows that each jump size $u\in S$ admits a normalized jump counter with additional properties. Furthermore, the proposition gives information on how jump counters $\psi_u$ and $\psi_v$ corresponding to different $u,v\in S$ interact. Informally, $\psi_u(x)$ counts the number of jumps of size $u$ the process $X$ can perform, starting from $x\in E$, until it reaches $\ker\psi_u$. Now, if $\psi_u(x+v)<\psi_u(x)$, then a jump of size $v$ will bring $X$ closer to $\ker\psi_u$, and the proposition shows that then, in fact, $\psi_u=\psi_v$. If instead $\psi_u(x+v)=\psi_u(x)$, then the jump $v$ is parallel to $\ker\psi_u$. This implies that $\ker\psi_u$ and $\ker\psi_v$ have a nonempty intersection, where jumps of size $u$ or $v$ do not occur. Finally, if $\psi_u(x+v)>\psi_u(x)$ then a jump of size $v$ moves $X$ farther away from $\ker\psi_u$, and in this case it turns out that $v=-u$.

\begin{proposition} \label{P:JC repr}
Every $u\in S$ admits a unique normalized jump counter $\psi_u$. The jump counter satisfies
\begin{equation} \label{P:JC repr:1}
F(x, A\cap B_{\varepsilon_u}(u)) = \lambda_u(A)\psi_u(x) \quad\text{for all $x\in E$ and all measurable $A\subseteq\R^d$},
\end{equation}
where $\lambda_u$ is a finite measure on $\R^d$ and $\varepsilon_u>0$ is a constant. Moreover, for any $u,v\in S$ and setting $\alpha=\psi_u(v)-\psi_u(0)$ and $\beta=\psi_v(u)-\psi_v(0)$, one of the following conditions holds:
\begin{enumerate}
\item\label{P:JC repr:alt1} $\alpha=\beta=-1$ and $\psi_u=\psi_v$,
\item\label{P:JC repr:alt2} $\alpha=\beta=1$ and $u=-v$,
\item\label{P:JC repr:alt3} one of $\alpha$ and $\beta$ equals zero, and $\alpha,\beta\in\N$.
\end{enumerate}
\end{proposition}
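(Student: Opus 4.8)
\emph{Proof strategy.} The first two claims follow immediately from the preceding lemmas. Given $u\in S$, Lemma~\ref{L:J2} provides $\varepsilon>0$ and a probability measure $\lambda$ on $\R^d$ such that $\phi_u:=F(\cdot,B_\varepsilon(u))$ is a jump counter corresponding to $u$ and $F(\cdot,A\cap B_\varepsilon(u))=\lambda(A)\phi_u$ for all measurable $A$. By the normalization discussion preceding Lemma~\ref{L:JC_prop}, the constant $c_u:=\phi_u(0)-\phi_u(u)$ is strictly positive, so $\psi_u:=\phi_u/c_u$ is a normalized jump counter; its uniqueness is exactly the last assertion of Lemma~\ref{L:JC_prop}. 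Writing $\phi_u=c_u\psi_u$ in the identity from Lemma~\ref{L:J2} yields \eqref{P:JC repr:1} with $\varepsilon_u:=\varepsilon$ and the finite measure $\lambda_u:=c_u\lambda$.

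For the trichotomy, the key tool is a pair of inequalities. By the argument in the proof of Lemma~\ref{L:JC_prop}, for any $v\in S$ the projection $\pi_v(x):=x+\psi_v(x)v$ along $v$ satisfies $\pi_v(E)\subseteq E$ (indeed $\pi_v(x)=x+nv$ with $n$ as in \eqref{eq:psi_u step}, so that $n=\psi_v(x)$). Applying the affine function $\psi_u$ to $\pi_v(x)\in E$ and using $\psi_u\ge0$ on $E$ gives
\[
0\le\psi_u\big(\pi_v(x)\big)=\psi_u(x)+\alpha\,\psi_v(x)\qquad(x\in E),
\]
and symmetrically $0\le\psi_v(x)+\beta\,\psi_u(x)$ on $E$. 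Moreover, picking $x\in E$ with $\psi_v(x)>0$ (which exists since $\psi_v\not\equiv0$ and $E$ spans $\R^d$), both $\psi_u(x)$ and $\psi_u(x+v)=\psi_u(x)+\alpha$ lie in $\N$ by Lemma~\ref{L:JC_prop}\ref{L:JC_prop:1}; hence $\alpha\in\Z$, and likewise $\beta\in\Z$.

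I then establish the dichotomy that $\alpha<0$ if and only if $\beta<0$. If $\alpha<0$, the first inequality shows $\psi_u(x)\ge-\alpha\,\psi_v(x)>0$ whenever $x\in E$ satisfies $\psi_v(x)>0$; since $\psi_u$ is a jump counter for $u$, this forces $x+u\in E$, so $\psi_v$ is itself a jump counter corresponding to $u$, and hence $\psi_v(0)-\psi_v(u)>0$, i.e.\ $\beta<0$. The converse follows by symmetry. Combined with integrality, we are in one of two situations: either $\alpha,\beta\le-1$, or $\alpha,\beta\in\N$. In the former case, normalizing $\psi_v$ as a jump counter for $u$ and invoking uniqueness (Lemma~\ref{L:JC_prop}) gives $\psi_v=-\beta\,\psi_u$, and symmetrically $\psi_u=-\alpha\,\psi_v$, so $\alpha\beta=1$; as $\alpha,\beta$ are negative integers this yields $\alpha=\beta=-1$ and $\psi_u=\psi_v$, which is alternative~\ref{P:JC repr:alt1}.

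It remains to treat $\alpha,\beta\in\N$. If one of $\alpha,\beta$ vanishes this is alternative~\ref{P:JC repr:alt3}, so assume $\alpha\ge1$ and $\beta\ge1$. Fix $x_0\in E$ with $\psi_v(x_0)>0$ and consider the walk $x_0,\ x_0+v,\ x_0+v+u,\ x_0+2v+u,\ \dots$, alternately adding $v$ and $u$. Using that $\psi_v(u+v)-\psi_v(0)=\beta-1\ge0$ and $\psi_u(u+v)-\psi_u(0)=\alpha-1\ge0$, one verifies inductively that before each $v$-jump the value of $\psi_v$ is at least $\psi_v(x_0)>0$ and before each $u$-jump the value of $\psi_u$ is at least $1>0$; hence every jump is admissible and $x_0+k(u+v)\in E$ for every $k\in\N$. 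Since $E$ is bounded, $u+v=0$, and then $\alpha=\psi_u(-u)-\psi_u(0)=-(\psi_u(u)-\psi_u(0))=1$ and likewise $\beta=1$, which is alternative~\ref{P:JC repr:alt2}. The substantive part of the argument is this case analysis; the crucial observation is that $\alpha<0$ makes $\psi_v$ a jump counter for $u$ (yielding both the sign dichotomy and, through uniqueness, alternative~\ref{P:JC repr:alt1}), and the only remaining difficulty is the compactness argument ruling out $u+v\ne0$ when $\alpha,\beta\ge1$.
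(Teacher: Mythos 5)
Your proof is correct and follows essentially the same route as the paper: existence and the product formula come from Lemma~\ref{L:J2} together with the normalization and uniqueness statements of Lemma~\ref{L:JC_prop}, integrality of $\alpha,\beta$ follows from $x+v\in E$, and the case $\alpha,\beta\ge1$ is handled by the same alternating walk $x_0, x_0+v, x_0+v+u,\dots$ forced into $E$ and killed by compactness unless $u+v=0$. The only genuine variation is in the case $\alpha<0$: the paper shows $\ker\psi_u=\ker\psi_v$ directly using the spanning points from Lemma~\ref{L:JC_prop}\ref{L:JC_prop:3}, whereas you derive the inequality $\psi_u+\alpha\psi_v\ge0$ on $E$ from the invariance $\pi_v(E)\subseteq E$, conclude that $\psi_v$ is itself a jump counter corresponding to $u$, and then invoke uniqueness of normalized jump counters --- both arguments yield $\alpha\beta=1$ and hence alternative~\ref{P:JC repr:alt1}, and yours has the minor advantage of making the sign dichotomy $\alpha<0\Leftrightarrow\beta<0$ explicit before the case split.
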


\begin{proof}
Existence of a jump counter $\psi_u$ satisfying~\eqref{P:JC repr:1} follows from Lemma~\ref{L:J2}. Since \eqref{P:JC repr:1} is preserved after positive scaling of $\psi_u$, we may assume that $\psi_u$ is normalized as discussed before Lemma~\ref{L:JC_prop}. This also yields uniqueness.

Consider now $u,v\in S$ with jump counters $\psi_u$, $\psi_v$ and $\alpha$, $\beta$ as stated. There is some $x\in E$ with $\psi_v(x)>0$ and hence $x+v\in E$. Thus $\alpha=\psi_u(v)-\psi_u(0)=\psi_u(x+v)-\psi_u(x)\in\Z$ due to Lemma~\ref{L:JC_prop}\ref{L:JC_prop:1}, and similarly $\beta\in\Z$ as well. We proceed by examining the possible values of $\alpha$ and $\beta$.

{\em Case 1:} $\alpha<0$ or $\beta<0$. Suppose $\alpha<0$. Lemma~\ref{L:JC_prop}\ref{L:JC_prop:3} yields points $x_1,\ldots,x_d\in E$ that affinely span $\ker\psi_u$. Moreover,
\[
\psi_u(x_j+v) = \psi_u(x_j+v)-\psi_u(x_j) = \psi_u(v)-\psi_u(0) = \alpha < 0,
\]
and hence $x_j+v\notin E$. We conclude that $\psi_v(x_j)=0$ for $j=1,\ldots,d$, whence $x_1,\ldots,x_d$ affinely span $\ker\psi_v$. Thus the kernels of $\psi_u$ and $\psi_v$ coincide, so that $\psi_v=\lambda\psi_u$ for some constant $\lambda$. Observing that
\[
-1 = \psi_v(v)-\psi_v(0) = \lambda(\psi_u(v)-\psi_u(0)) = \lambda\alpha
\]
we have $\lambda=-1/\alpha$, and therefore
\[
-1 = \psi_u(u)-\psi_u(0) = -\alpha(\psi_v(u)-\psi_v(0)) = -\alpha\beta.
\]
Consequently $\alpha=\beta=-1$ and $\psi_u=\psi_v$, so that \ref{P:JC repr:alt1} holds. The same argument yields the conclusion when $\beta<0$.

{\em Case 2:} $\alpha>0$ and $\beta>0$. From Lemma~\ref{L:JC_prop}\ref{L:JC_prop:2} there is $x\in E$ such that $\psi_v(x) = 1$. Define
\[
J=\{j\in\mathbb N\colon x+j(u+v)\in E \text{ and } \psi_v(x+j(u+v))>0\}.
\]
We claim that $J=\N$. To see this, first note that $0\in J$ since $\psi_v(x) > 0$. Moreover, for any $j\in J$ we have $x+ju + (j+1)v\in E$ and thus $\psi_u(x+ju + (j+1)v) = \psi_u(x+j(u+v)) + \alpha > 0$. Consequently, $x+(j+1)(u+v)\in E$ and $\psi_v(x+(j+1)(u+v)) = \psi_v(x + ju + (j+1)v) + \beta >0$. That is, $j+1\in J$ and hence $J=\N$ by induction. Compactness of $E$ then forces $u+v=0$, thus $\alpha=\psi_u(-u)-\psi_u(0)=\psi_u(0)-\psi_u(u)=1$, and similarly $\beta=1$. Thus \ref{P:JC repr:alt2} holds.

Case 3: $\alpha=0$ and $\beta\ge0$, or $\alpha\ge0$ and $\beta=0$. This directly gives \ref{P:JC repr:alt3} since $\alpha$ and $\beta$ are integers.
\end{proof}

Recall that $\mathfrak M^d_*$ denotes the vector space of all signed Radon measures on $\R^d\setminus\{0\}$.

\begin{lemma} \label{L:F1}
Assuming that $S\ne\emptyset$, there exist vectors $u_1,\ldots,u_k\in S$ and an affine map $F_1\colon\R^k\to\mathfrak M^d_*$ such that
\begin{enumerate}
\item\label{L:F1:1} the affine map $\Psi\colon\R^d\to\R^k$ given by $\Psi(x)=(\psi_{u_1}(x),\ldots,\psi_{u_k}(x))$ is surjective,
\item\label{L:F1:2} $F_1(\Psi(x),\cdot) = F(x,\cdot)$ for all $x\in \R^d$,
\item\label{L:F1:3} every normalized jump counter $\psi_u$, $u\in S$, is of the form
\[
\psi_u = c_0+c_1\psi_{u_1}+\cdots+c_k\psi_{u_k}
\]
for some $c_0,\ldots,c_k\in\R$.
\end{enumerate}
Here the $\psi_{u_j}$ are the normalized jump counters corresponding to the $u_j$.
\end{lemma}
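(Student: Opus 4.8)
The plan is to dispose of items \ref{L:F1:1} and \ref{L:F1:3} by elementary linear algebra on affine functions, and then to put the real work into the factorization \ref{L:F1:2}. Write each normalized jump counter as $\psi_u(x)=\langle a_u,x\rangle+\beta_u$ with $a_u\in\R^d$, $\beta_u\in\R$; normalization forces $\psi_u$ to be non-constant, so $a_u\neq 0$. Since $S\neq\emptyset$, the subspace $V:=\mathrm{span}\{a_u\colon u\in S\}\subseteq\R^d$ is nonzero; choose $u_1,\dots,u_k\in S$ so that $a_{u_1},\dots,a_{u_k}$ is a basis of $V$. The linear part of $\Psi=(\psi_{u_1},\dots,\psi_{u_k})$ then has rank $k$, so $\Psi$ is surjective, giving \ref{L:F1:1}; and for any $u\in S$, writing $a_u=\sum_j c_j a_{u_j}$, the affine function $\psi_u-\sum_j c_j\psi_{u_j}$ has zero linear part, hence equals some constant $c_0$, which is \ref{L:F1:3}.

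For \ref{L:F1:2} it suffices to show that the linear part $\ell_F$ of the affine map $F$ annihilates $W_0:=\bigcap_{u\in S}\ker\langle a_u,\cdot\rangle$, which by our choice of the $u_j$ coincides with $\ker(\Psi-\Psi(0))$: once this is known, $F$ descends to an affine map $F_1\colon\R^k\to\mathfrak M^d_*$ with $F_1\circ\Psi=F$ (concretely $F_1=F\circ\sigma$ for any affine right inverse $\sigma$ of the surjection $\Psi$), which is exactly \ref{L:F1:2}. So fix $w\in W_0$ and put $\mu:=\ell_F(w)=F(x+w,\cdot)-F(x,\cdot)$, a signed Radon measure on $\R^d\setminus\{0\}$ independent of $x$. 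Two inputs are needed. First, for $x\in E$ the measure $F(x,\cdot)$ is carried by $S\cup\{0\}$ because $\mathrm{supp}\,F(x,\cdot)\setminus\{0\}\subseteq S$ by definition of $S$; since $x\mapsto F(x,B)$ is affine for $B$ bounded and bounded away from $0$, and $E$ affinely spans $\R^d$, it follows that $F(x,B)=0$ for all $x\in\R^d$ whenever such a $B$ is disjoint from $S$, hence $\mu(B)=0$, and running this over Borel subsets, $|\mu|(B)=0$. Second, Proposition~\ref{P:JC repr} gives $F(x,B)=\lambda_u(B)\psi_u(x)$ for $x\in E$ and $B\subseteq B_{\varepsilon_u}(u)$, and the same affine-extension argument upgrades this to all $x\in\R^d$; as $\psi_u(x+w)=\psi_u(x)$ for $w\in W_0$, this yields $\mu(B)=0$, hence $|\mu|(B)=0$, for every $B\subseteq B_{\varepsilon_u}(u)$.

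To finish, let $U:=\bigcup_{u\in S}B_{\varepsilon_u}(u)$, an open set containing $S$. Since $\R^d\setminus\{0\}$ is Lindel\"of, countably many of the balls $B_{\varepsilon_u}(u)$ already cover $U$, so $|\mu|(U)=0$ by the second input; and $(\R^d\setminus\{0\})\setminus U$ is disjoint from $S$, so $|\mu|$ vanishes there by the first input (exhausting by bounded sets bounded away from $0$). Therefore $|\mu|=0$, i.e.\ $\ell_F(w)=0$, which completes the argument. The crux of the proof is \ref{L:F1:2}: one must lift the \emph{localized} identity \eqref{P:JC repr:1} --- valid only on balls around individual jump sizes, and a priori only for states in $E$ --- to a \emph{global} identity for $F$ on all of $\R^d$. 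The two mechanisms making this work are the affine-extension principle (an affine function vanishing on $E$ vanishes on $\R^d$, since $E$ affinely spans it) and the fact that the balls $B_{\varepsilon_u}(u)$ cover $S$ while $F$ is supported on $S$, so nothing escapes control. A technical point worth noting is that $S$ itself need not be Borel, which is why the covering argument is run with the open set $U$ and its complement rather than with $S$ directly.
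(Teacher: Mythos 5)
Your proof is correct, and it uses the same essential ingredients as the paper's (the localized factorization~\eqref{P:JC repr:1}, a countable subcover of $S$ by the balls $B_{\varepsilon_u}(u)$, affine extension of identities from $E$ to $\R^d$, and an affine right inverse of the surjection $\Psi$), but it organizes part~\ref{L:F1:2} in dual form. The paper selects $u_1,\ldots,u_k$ so that $\{1,\psi_{u_1},\ldots,\psi_{u_k}\}$ spans ${\rm span}\{F(\cdot,A)\colon A \text{ measurable}\}$, which it identifies with ${\rm span}\{\psi_u\colon u\in S\}$ via an explicit series representation $F(x,A)=\sum_j\lambda_{u_j}(A\cap C_j)\psi_{u_j}(x)$ over a disjointified countable cover $\{C_j\}$ of $S$; the factorization through $\Psi$ then follows because each $F(\cdot,A)$ is an affine function of $\Psi(\cdot)$. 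You instead select the $u_j$ by a basis of the linear parts $a_u$ and show directly that the linear part of $F$ annihilates $\ker(\Psi-\Psi(0))$, checking that the signed measure $\mu=\ell_F(w)$ vanishes on each ball $B_{\varepsilon_u}(u)$ and on the complement of their union. Your version makes explicit two points the paper passes over quickly --- that $F(x,\cdot)$ is carried by $S\cup\{0\}$ must itself be propagated from $E$ to all of $\R^d$ by the affine-span argument, and that $S$ need not be Borel, so the covering must be run with the open set $U$ --- while the paper's version yields the span identity~\eqref{eq:m30} as a reusable byproduct. Both are complete; neither has a gap.
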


\begin{proof}
We first claim that
\begin{equation} \label{eq:m30}
{\rm span}\{\psi_u \colon u\in S\} = {\rm span}\{ F(\cdot,A)\colon A\subseteq\R^d \text{ measurable}\}.
\end{equation}
The inclusion $\supseteq$ follows immediately from \eqref{P:JC repr:1}. We now prove the reverse inclusion $\subseteq$. For each $u\in S$, let $\lambda_u$ and $\varepsilon_u$ be as in Proposition~\ref{P:JC repr}. Then $\{ B_{\varepsilon_u}(u):u\in S\}$ is an open covering of $S$. Since any open covering of any subset of $\R^d$ admits a countable subcovering, we may choose countably many vectors $u_j\in S$, $j\in\mathbb N$, such that $S\subseteq \bigcup_{j=1}^\infty B_{\varepsilon_{u_j}}(u_j)$. Defining $C_1=B_{\varepsilon_{u_1}}(u_1)$ and then recursively $C_j=B_{\varepsilon_{u_j}}(u_j)\setminus \bigcup_{i<j} C_i$ we obtain a pairwise disjoint countable measurable covering of $S$ with $C_j\subseteq B_{\varepsilon_{u_j}}(u_j)$ for all $j\in \mathbb N$. For any measurable $A\subseteq\R^d$ we then have
\[
F(x,A) = \sum_{j=1}^\infty F(x,A\cap C_j) = \sum_{j=1}^\infty \lambda_{u_j}(A\cap C_j) \psi_{u_j}(x), \quad x\in\mathbb R^d.
\]
Since ${\rm span}\{\psi_u \colon u\in S\}$ is a subspace of the $(d+1)$-dimensional space of all affine functions from $\R^d$ to $\R$, it is closed under pointwise convergence and therefore contains $F(\cdot,A)$. This proves \eqref{eq:m30}.

Write $V={\rm span}\{\psi_u \colon u\in S\}$ for brevity. If $V$ contains the constant function $1$, we can find $u_1,\ldots,u_k$ such that $\{1,\psi_{u_1},\ldots,\psi_{u_k}\}$ is a basis for $V$. If $V$ does not contain $1$, we can find $u_1,\ldots,u_k$ such that $\{\psi_{u_1},\ldots,\psi_{u_k}\}$ is a basis for $V$. In either case, the $\psi_{u_j}$ along with $1$ are linearly independent, and we have \ref{L:F1:3}.

We also obtain \ref{L:F1:1}. Indeed, if $c_0 + c_1\psi_{u_1} + \cdots + c_k\psi_{u_k} = 0$ for some constants $c_0,\ldots,c_k$, then $c_0=c_1=\cdots=c_k=0$ by linear independence. Thus the affine space $\Psi(\R^d)$ is not contained in any proper affine subspace of $\R^k$, and so must be all of $\R^k$, which proves~\ref{L:F1:1}.

Since $\Psi$ is surjective, there exists an affine map $\Phi:\R^k\to\R^d$ such that $\Psi\circ\Phi=\id$. We define $F_1:\R^k\to\mathfrak M^d_*$ by
\[
F_1(y,A) = F(\Phi(y),A).
\]
This is affine in $y$, being the composition of two affine maps. We now argue \ref{L:F1:2}. Pick $x\in\R^d$ and set $y=\Psi(x)$ and $x'=\Phi(y)$. It follows from \eqref{eq:m30} and the choice of $u_1,\ldots,u_k$ that for any fixed measurable subset $A\subseteq\R^d$, there exist $c_0\in\R$ and $c\in\R^k$ such that $F(\cdot,A)=c_0+\langle c, \Psi(\cdot)\rangle$. Since also $\Psi(x')=\Psi\circ\Phi(y)=y=\Psi(x)$, we get
\[
F_1(y,A) = F(x',A) = c_0+\langle c, \Psi(x')\rangle = c_0+\langle c, \Psi(x)\rangle = F(x,A).
\]
This proves \ref{L:F1:2}.
\end{proof}

The affine map $\Psi\colon\R^d\to\R^k$ in Lemma~\ref{L:F1}, being surjective, can be extended to an invertible affine map $T\colon \R^d\to\R^d$ whose first $k$ component functions $T_1,\ldots,T_k$ are precisely $\Psi_1,\ldots,\Psi_k$. In particular, $T_j=\psi_{u_j}$ is a normalized jump counter for each $j=1,\ldots,k$. Note that $k=0$ is possible, and occurs precisely when $X$ does not jump at all and thus $S=\emptyset$. In this case $T$ is simply an arbitrary invertible affine map, for example the identity map.

\begin{theorem}\label{T:JM}
The invertible affine map $T$ satisfies the following properties:
\begin{enumerate}
\item\label{T:JM:1} $T(E) \subseteq \mathbb N^k\times \mathbb R^{d-k}$,
\item\label{T:JM:2} $Y = (T(X)_1,\dots, T(X)_k)$ is affine and Markov,
\item\label{T:JM:3} $Z = (T(X)_{k+1},\dots, T(X)_d)$ can only jump when $Y$ jumps, that is,
\begin{equation} \label{T:JM:3_1}
\{t\geq 0\colon \Delta Z(t)\neq 0\} \subseteq \{t\geq 0\colon \Delta Y(t)\neq 0\} \text{ a.s.},
\end{equation}
and its jump characteristic is of the form $\nu^Z(dt,d\zeta)=F^Z(Y_{t-},d\zeta)dt$ for some affine map $F^Z\colon \R^k\to\mathfrak M^{d-k}_*$,
\item\label{T:JM:4} the canonical coordinate projections $\pi_j\colon(x_1,\ldots,x_d)\mapsto x_j$ are normalized jump counters of the transformed process $(Y,Z)$ for $j=1,\ldots,k$.
\end{enumerate}
\end{theorem}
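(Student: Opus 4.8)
The plan is to settle \ref{T:JM:1} immediately, to derive \ref{T:JM:3} and \ref{T:JM:4} from the jump-counter calculus of Section~\ref{S:jumps} together with the transformation rules for semimartingale characteristics, and to obtain \ref{T:JM:2} from the extra observation that $Y$ lives on a finite set. For \ref{T:JM:1}: by construction $T_j=\psi_{u_j}$ for $j=1,\dots,k$, and Lemma~\ref{L:JC_prop}\ref{L:JC_prop:1} gives $\psi_{u_j}(E)\subseteq\N$, so $T(E)\subseteq\N^k\times\R^{d-k}$; in particular the set $E^Y:=\{(T(x)_1,\dots,T(x)_k):x\in E\}$ is a compact subset of $\N^k$, hence \emph{finite}.

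For \ref{T:JM:2}, write $Y=\Psi(X)$ with $\Psi=(\psi_{u_1},\dots,\psi_{u_k})$. Since $Y$ is c\`adl\`ag with values in the finite set $E^Y$, a standard left/right-limit argument shows it is piecewise constant with finitely many jumps on each bounded interval, hence a finite-variation pure-jump semimartingale; therefore $C^Y=0$ and (for any admissible truncation function $h$) $B^Y=h*\nu^Y$. The jump characteristic $\nu^Y$ is the image of $\nu(dt,d\xi)=F(X_{t-},d\xi)\,dt$ under $\xi\mapsto\Psi(X_{t-}+\xi)-\Psi(X_{t-})$, which by Lemma~\ref{L:F1}\ref{L:F1:2} depends on $X_{t-}$ only through $\Psi(X_{t-})=Y_{t-}$; thus $\nu^Y(dt,d\eta)=F^Y(Y_{t-},d\eta)\,dt$ for an affine $F^Y\colon\R^k\to\mathfrak M^k_*$, and $B^Y_t=\int_0^t b^Y(Y_{s-})\,ds$ with $b^Y(y)=\int h(\eta)F^Y(y,d\eta)$ affine (the integral being finite since for $y\in E^Y$ the measure $F^Y(y,\cdot)$ is a finite positive measure supported on $\Z^k\setminus\{0\}$). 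Hence under every $\P_x$ the characteristics of $Y$ are affine functions of its current state. Since $E^Y$ is finite and jumps occur at finite rate, the associated martingale problem is the well-posed one of a continuous-time Markov chain with generator $f\mapsto\int(f(\cdot+\eta)-f)\,F^Y(\cdot,d\eta)$; consequently $Y$ is Markov and its $\P_x$-law depends only on $\Psi(x)$, and setting $\P^Y_y$ equal to that common law exhibits $Y$ as an affine process with triplet $(b^Y,0,F^Y)$.

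For \ref{T:JM:3}, the crucial fact is that \emph{every jump of $X$ forces $Y$ to jump}: given $v\in S$ with normalized jump counter $\psi_v$, Lemma~\ref{L:F1}\ref{L:F1:3} gives $\psi_v=c_0+\sum_j c_j\psi_{u_j}$, and evaluating linear parts at $v$ yields
\[
-1=\psi_v(v)-\psi_v(0)=\sum_{j=1}^k c_j\bigl(\psi_{u_j}(v)-\psi_{u_j}(0)\bigr),
\]
so $\psi_{u_j}(v)-\psi_{u_j}(0)\ne0$ for some $j$, i.e.\ $\Psi(x+v)-\Psi(x)\ne0$ for every $x$. Since a.s.\ every jump of $X$ has size in $S$, and a jump of $Z$ requires a jump of $X$, this gives $\{t\ge0:\Delta Z(t)\ne0\}\subseteq\{t\ge0:\Delta Y(t)\ne0\}$ a.s., which is \eqref{T:JM:3_1}; and the jump characteristic of $Z$, being the image of $\nu$ under $\xi\mapsto(T(X_{t-}+\xi)-T(X_{t-}))_{k+1,\dots,d}$, depends on $X_{t-}$ only through $Y_{t-}$ by Lemma~\ref{L:F1}\ref{L:F1:2}, so $\nu^Z(dt,d\zeta)=F^Z(Y_{t-},d\zeta)\,dt$ with $F^Z$ affine. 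For \ref{T:JM:4}, fix $j\le k$; since $\pi_j\circ T=T_j=\psi_{u_j}$ we have $\pi_j=\psi_{u_j}\circ T^{-1}$, which is affine, not identically zero, and nonnegative on $T(E)$. Letting $A$ be the linear part of $T$, the jump of $T(X)$ accompanying an $X$-jump of size $u_j$ is $Au_j$, which belongs to the jump-size set of $T(X)$ because $u_j\in S$ and $A$ is invertible; and if $x'\in T(E)$ with $\pi_j(x')>0$ then $x:=T^{-1}(x')\in E$ has $\psi_{u_j}(x)>0$, hence $x+u_j\in E$ and $x'+Au_j=T(x+u_j)\in T(E)$, so $\pi_j$ is a jump counter for $Au_j$. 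Writing $p=T^{-1}(0)$, so $T^{-1}(Au_j)=p+u_j$, the affine property and normalization of $\psi_{u_j}$ give $\pi_j(Au_j)-\pi_j(0)=\psi_{u_j}(p+u_j)-\psi_{u_j}(p)=\psi_{u_j}(u_j)-\psi_{u_j}(0)=-1$, so $\pi_j$ is normalized.

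The one step that goes beyond bookkeeping is the Markov property in \ref{T:JM:2}: one has to notice that $Y$, being pure-jump on a finite state space, is simply a continuous-time Markov chain, for which well-posedness of the martingale problem is elementary — in contrast to the delicate situation for general affine processes — and then read off both Markovianity and the family $(\P^Y_y)$. The remaining assertions are direct consequences of the jump-counter results of Section~\ref{S:jumps}.
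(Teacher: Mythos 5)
Your proof is correct and follows essentially the same route as the paper's: part \ref{T:JM:1} from Lemma~\ref{L:JC_prop}\ref{L:JC_prop:1}, parts \ref{T:JM:2}--\ref{T:JM:3} from Lemma~\ref{L:F1} combined with the transformation rule for jump characteristics and the observation that $Y$ is a finite-state pure-jump process (hence a continuous-time Markov chain), and part \ref{T:JM:4} by the same direct verification. The only step you treat more casually than the paper is the claim that almost surely every jump of $X$ has size in $S$, which the paper justifies in one line via the compensation formula applied to $F(X_{t-},\R^d\setminus(S\cup\{0\}))$; this is standard and does not constitute a gap.
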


\begin{proof}
Part \ref{T:JM:1} follows directly from the construction of $T$ and Lemma~\ref{L:JC_prop}\ref{L:JC_prop:1}. To prove \ref{T:JM:2}, let $F_1$ be as in Lemma~\ref{L:F1} and define
\[
F^Y(y,B) = F_1(y, \Psi^{-1}(\Psi(0) + B))
\]
for each $y\in\R^k$ and measurable $B\subseteq \R^k\setminus\{0\}$. For any $y=\Psi(x)$ with $x\in E$, the definition of $F^Y$ along with Lemma~\ref{L:F1}\ref{L:F1:2} yield
\begin{align*}
\int_{\R^k\setminus\{0\}} (\|\eta\|^2\wedge 1) F^Y(y,d\eta)
&= \int_{\{\xi\in\R^d\colon \Psi(\xi)\ne\Psi(0)\}} (\|\Psi(\xi)-\Psi(0)\|^2\wedge 1)F_1(y,d\xi) \\
&\le \kappa^2 \int_{\R^d\setminus\{0\}} (\|\xi\|^2\wedge 1) F(x,d\xi) \\
&<\infty,
\end{align*}
where $\kappa$ is the maximum of one and the operator norm of the linear map $\Psi-\Psi(0)\colon\R^d\to\R^k$. Thus, for all $y\in\Psi(E)$,  $F^Y(y,\cdot)$ is a positive measure on $\R^k\setminus\{0\}$ satisfying \eqref{eq:Levy}, and in particular lies in $\mathfrak M^k_*$. Since $E$ affinely spans $\R^d$ and $\Psi$ is surjective, it follows that $\Psi(E)$ affinely spans $\R^k$. From the affine dependence on $y$ we then infer $F(y,\cdot)\in\mathfrak M^k_*$ for all $y\in\R^k$. Thus $F^Y$ qualifies as the jump kernel of an affine process with state space $\Psi(E)$.

Next, the process $Y=\Psi(X)$ is an $\N^k$-valued semimartingale with characteristics $(B^Y,C^Y,\nu^Y)$, say. Since it takes values in a discrete set, we may take $B^Y=0$ and $C^Y=0$ (the latter also follows from the fact that $X$ has no diffusion part). In view of \cite[Proposition B.1]{kallsen.kruehner.es.15} along with Lemma~\ref{L:F1}\ref{L:F1:2}, the jump characteristic $\nu^Y$ is given by
\begin{align*}
\nu^Y([0,t]\times B) &= \int_0^t \int_{\R^d\setminus\{0\}} \bm 1_{\{\Psi(X_{s-}+\xi)-\Psi(X_{s-})\in B\}} F(X_{s-},d\xi)ds \\
&= \int_0^t \int_{\R^d\setminus\{0\}} \bm 1_{\{\Psi(\xi)-\Psi(0)\in B\}} F_1(\Psi(X_{s-}),d\xi)ds \\
&= \int_0^t F^Y(Y_{s-},B)ds
\end{align*}
for any $t\ge0$ and measurable $B\subseteq\R^k\setminus\{0\}$. We conclude that $Y$ is an affine process with state space $\Psi(E)$ and triplet $(0,0,F^Y)$. Since the state space is finite the Markov property follows easily, for instance by an argument based on \cite[Theorem~4.4.1]{ethier.kurtz.86}. This completes the proof of~\ref{T:JM:2}.

We now prove \ref{T:JM:3}. We claim that there exists a nullset $N\subseteq\Omega$, such that for every $\omega\notin N$ one has $\Delta X_t(\omega) \in S\cup\{0\}$ for all $t\ge 0$. Indeed, since the jump characteristic of $X$ is $F(X_{t-},d\xi)dt$, \citet[Theorem~II.1.8]{jacod.shiryaev.03} yields
\[
\E\left[ \sum_{t>0}\bm1_{\R^d\setminus(S\cup\{0\})}(\Delta X_t)\right] = \E\left[\int_0^\infty F(X_{t-},\R^d\setminus(S\cup\{0\}))dt\right],
\]
which is equal to zero by definition of $S$. Thus $\sum_{t>0}\bm1_{\R^d\setminus(S\cup\{0\})}(\Delta X_t(\omega))=0$ for all $\omega$ outside some nullset $N$. In view of the convention $X_0=X_{0-}$, this is precisely what we claimed. Now, pick any $\omega\notin N$, along with $t\ge0$ such that $\Delta Z_t(\omega)\ne0$. Then the vector $u=\Delta X_t(\omega)$ is nonzero and hence lies in $S$. Let $\psi_u$ be the corresponding normalized jump counter, which satisfies
\[
\psi_u(X_t(\omega))-\psi_u(X_{t-}(\omega))=\psi_u(u)-\psi_u(0)=-1
\]
by definition. On the other hand, Lemma~\ref{L:F1}\ref{L:F1:3} yields $c_0,\ldots,c_k\in\R$ such that $\psi_u=c_0+c_1\psi_{u_1}+\cdots+c_k\psi_{u_k}$, whence
\[
\psi_u(X_t(\omega))-\psi_u(X_{t-}(\omega)) = c_1 (\Delta Y_t)_1(\omega) + \cdots + c_k(\Delta Y_t)_k(\omega).
\]
It follows that $\Delta Y_t(\omega)\ne0$, which proves~\eqref{T:JM:3_1}. To obtain the form of the characteristic $\nu^Z(dt,d\zeta)$, define $\Phi\colon\R^d\to\R^{d-k}$ by $\Phi(x)_j=T(x)_{k+j}$ for $j=1,\ldots,d-k$, so that $Z=\Phi(X)$. Then, as above, we have
\[
\nu^Z([0,t]\times C) = \int_0^t \int_{\R^d\setminus\{0\}} \bm 1_{\{\Phi(X_{s-}+\xi)-\Phi(X_{s-})\in C\}} F(X_{s-},d\xi)ds = \int_0^t F^Z(Y_{s-},C)ds
\]
for any $t\ge0$ and measurable $C\subseteq\R^{d-k}\setminus\{0\}$, where $F^Z(y,C)=F_1(y, \Phi^{-1}(\Phi(0)+C))$. That $F^Z$ maps $\R^k$ to $\mathfrak M^{d-k}_*$ follows in the same way as the corresponding statement for $F^Y$ above. This completes the proof of~\ref{T:JM:3}.

It remains to prove \ref{T:JM:4}. For each $j=1,\ldots,k$, the vector $v_j:=T(u_j)-T(0)$ is a possible jump size of $(Y,Z)$. We check that $\pi_j$ is the corresponding normalized jump counter. Certainly $\pi_j$ is affine, not identically zero, and nonnegative on $T(E)\subseteq\N^k\times\R^{d-k}$. Since $\psi_{u_j}$ is normalized, so is $\pi_j$, because
\[
\pi_j(v_j)-\pi_j(0) = \pi_j(v_j) = \psi_{u_j}(u_j)-\psi_{u_j}(0) = -1.
\]
Finally, if $\pi_j(T(x))>0$ for some $x\in E$, then $\psi_{u_j}(x)>0$, hence $x+u_j\in E$, and therefore
\[
T(x) + v_j = T(x+u_j) \in T(E).
\]
This completes the proof of \ref{T:JM:4} and of the theorem.
\end{proof}

\section{Examples and further classification} \label{S:ex}

In this section we classify all affine processes with compact state space in one and two dimensions. For an affine process $X$ with triplet $(b,c,F)$, we continue to let $S$ denote the set \eqref{eq:S} of possible jump sizes.

\paragraph{Dimension $d=1$.} In this case Theorem~\ref{T:main} yields, up to an affine transformation, two possible affine processes $X$ with compact state space: either $X$ is deterministic, or $X$ is a finite-state continuous time Markov chain. We only inspect the second possibility, which turns out to result in a simple birth--death process.

\begin{proposition}\label{p:1d}
Assume $X$ is a non-deterministic\footnote{By non-deterministic we mean that for some $x\in E$, the law of $X$ under $\P_x$ is not a point mass concentrated on one single c\`adl\`ag function.} affine process with compact state space $E\subseteq\R$. Then, up to an affine transformation, we have $E=\{0,\dots,N\}$ for some integer $N\ge1$, and there are real numbers $\alpha>0$, $\beta\geq 0$ such that
\[
F(x,\cdot) = x\alpha\delta_{-1} + (N-x)\beta\delta_1,\qquad x\in\mathbb R.
\]
Moreover, the moment generating function of $X$ is given by
\[
\E_x[e^{uX_t}] = \Phi(u,t)\Psi(u,t)^x,\qquad x\in E,
\]
for any $u\in\C$ and $t\ge0$, where we set $0^0=1$, and the functions
\begin{align*}
\Phi(u,t) &= \left(\frac{\alpha+\beta(e^u+(1-e^u)e^{-t(\alpha+\beta)})}{\alpha+\beta}\right)^{N}\,, \\
\Psi(u,t) &= 1 + \frac{(\beta+\alpha)(e^u-1)}{(\beta e^u+\alpha)e^{t(\alpha+\beta)}-\beta(e^u-1)}\,,
\end{align*}
solve the Riccati equations
\begin{align*}
\partial_t\Phi(u,t) &= N\beta \Phi(u,t) (\Psi(u,t)-1), && \Phi(u,0)=1, \\
\partial_t\Psi(u,t) &= \alpha + (\beta-\alpha)\Psi(u,t) - \beta\Psi(u,t)^2, && \Psi(u,0)=e^u.
\end{align*}
\end{proposition}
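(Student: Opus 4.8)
The plan is to read off the state space and jump kernel from the structure theorems of Sections~\ref{S:diff}--\ref{S:jumps}, and then to obtain the moment generating function by exhibiting an explicit martingale. By Theorem~\ref{T:main}, $c=0$ and there are an invertible affine map $T\colon\R\to\R$ and an integer $k\le d=1$ with $T(E)\subseteq\N^k\times\R^{1-k}$. The case $k=0$ would make $X$ the deterministic solution of a linear ODE (the remark after Theorem~\ref{T:main}), contrary to hypothesis, so $k=1$. After replacing $X$ by $T(X)$ I may assume $E\subseteq\N$, and by Theorem~\ref{T:JM}\ref{T:JM:4} the coordinate $\pi_1=\mathrm{id}$ is a normalized jump counter of $X$, necessarily for the jump size $-1$; in particular $-1\in S$. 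Since $\ker(\mathrm{id})=\{0\}$, Lemma~\ref{L:JC_prop}\ref{L:JC_prop:3} forces $0\in E$, and \eqref{eq:psi_u step} with $u=-1$ and $x=N:=\max E$ shows $\{0,1,\dots,N\}\subseteq E$, hence $E=\{0,\dots,N\}$; here $N\ge1$ because the affine span of $E$ is $\R$. Being $\{0,\dots,N\}$-valued and c\`adl\`ag, $X$ is piecewise constant, so it has no genuine drift and its generator acts by $\Lcal f(x)=\int(f(x+\xi)-f(x))\,F(x,d\xi)$.

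\emph{Jump sizes and jump counters.} Fix $u\in S$; since $d=1$, $\psi_u$ is affine in one variable. Apply Proposition~\ref{P:JC repr} to the pair $(u,-1)$, noting $\psi_{-1}=\mathrm{id}$, so that $\alpha=\psi_u(-1)-\psi_u(0)$ and $\beta=\psi_{-1}(u)-\psi_{-1}(0)=u$. Alternative \ref{P:JC repr:alt1} forces $u=-1$; alternative \ref{P:JC repr:alt2} forces $u=1$; alternative \ref{P:JC repr:alt3} would require $u=\beta\in\N\setminus\{0\}$ and $\alpha=0$, i.e.\ $\psi_u$ a positive constant, impossible for a jump counter on the finite set $E$ when $u\neq0$. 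Hence $S\subseteq\{-1,1\}$, with $-1\in S$. No downward jump is possible from $0$ and no upward jump from $N$, so $F(0,\{-1\})=0$ and $F(N,\{1\})=0$; together with the normalization, \eqref{P:JC repr:1} (applied with a ball around $-1$, resp.\ $1$, small enough to meet $S$ only at that point) gives $\psi_{-1}(x)=x$ and, if $1\in S$, $\psi_1(x)=N-x$. Then \eqref{P:JC repr:1} yields $F(x,\{-1\})=\alpha x$ and $F(x,\{1\})=\beta(N-x)$ for constants $\alpha,\beta\ge0$ (with $\beta=0$ when $1\notin S$); since $F(x,\cdot)$ is supported in the finite set $S\subseteq\{-1,1\}$, this is the asserted expression, and $\alpha>0$ because $-1\in S$.

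\emph{Moment generating function.} Fix $u\in\C$ and let $\Phi(u,\cdot),\Psi(u,\cdot)$ be the solutions of the stated Riccati equations; a direct differentiation confirms that the displayed closed forms solve them with the correct initial values. For fixed $T\ge0$ put $M_t=\Phi(u,T-t)\,\Psi(u,T-t)^{X_t}$, which is well defined, with the convention $0^0=1$, because $X_t\in\{0,\dots,N\}$. With $h(s,x)=\Phi(u,s)\Psi(u,s)^x$, It\^o's formula for the finite-activity pure-jump semimartingale $X$ gives $M_t=M_0+\int_0^t(-\partial_s h+\Lcal h)(T-s,X_{s-})\,ds+L_t$ for a local martingale $L$, where $\Lcal f(x)=\alpha x(f(x-1)-f(x))+\beta(N-x)(f(x+1)-f(x))$. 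A short calculation shows that $-\partial_s h+\Lcal h\equiv0$ is equivalent to the two Riccati equations, so $M$ is a local martingale; it is bounded, as $h$ is continuous on the compact set $[0,T]\times\{0,\dots,N\}$, hence a true martingale, whence $\E_x[e^{uX_T}]=\E_x[M_T]=M_0=\Phi(u,T)\Psi(u,T)^x$. Since $T$ was arbitrary, the formula follows. For $u\in\R$ everything here is manifestly well defined; for complex $u$ one observes that although $\Psi(u,t)$ has poles in $u$, at each such pole $\Phi(u,t)$ has a zero of order $N$, so $\Phi(u,t)\Psi(u,t)^x$ remains holomorphic for $0\le x\le N$ (which is also why the formula is consistent with $\E_x[e^{uX_t}]$ being entire); alternatively, prove the formula for $u\in\R$ and extend by analytic continuation.

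\emph{Main obstacle.} The It\^o/generator computation and the verification of the closed forms are routine. The real work is in the second step --- squeezing $S\subseteq\{-1,1\}$ and the two normalized jump counters out of Proposition~\ref{P:JC repr} --- and, for the complex argument, noticing the exact cancellation of the poles of $\Psi$ by the zeros of $\Phi$.
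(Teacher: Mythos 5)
Your proof is correct and follows essentially the same route as the paper: reduce via Theorem~\ref{T:main} to $E\subseteq\N$ with $\pi_1$ a normalized jump counter, pin down $S\subseteq\{-1,1\}$ and the form of $F$ via Proposition~\ref{P:JC repr} and \eqref{P:JC repr:1}, and then verify the martingale $\Phi(u,t-s)\Psi(u,t-s)^{X_s}$ using the Riccati equations. Your treatment is in places more detailed than the paper's (explicit identification of $\psi_1=N-\pi_1$, the true-martingale justification, and in particular the observation that the order-$N$ zeros of $\Phi$ cancel the poles of $\Psi^x$ so that the formula extends from real to complex $u$ — a point the paper's proof, which fixes $u\in\R$, leaves implicit).
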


\begin{proof}
Theorem~\ref{T:main} yields, after an affine transformation, that $E\subseteq \mathbb N$ and that the coordinate projection $\pi_1\colon x\mapsto x$, which coincides with the identity since $d=1$, is a normalized jump counter. In particular $E\subseteq\{0,\ldots,N\}$ for some $N\in E$, and since $X$ is non-deterministic, $E$ must contain at least two states, so $N\ge1$. Let $u\in S$ be a vector whose normalized jump counter is $\pi_1$. Then $-1=\pi_1(u)-\pi_1(0)=u$, so \eqref{eq:psi_u step} implies $E=\{0,\ldots,N\}$. If $S=\{-1\}$ then $F$ has the claimed jump structure with $\beta=0$. Otherwise, consider $v\in S$, $v\ne-1$, and its corresponding normalized jump counter $\psi_v$. Since $\pi_1(v)-\pi_1(0)=v\ne-1$, Proposition~\ref{P:JC repr} yields $v=-u=1$. Thus $S=\{-1,1\}$ and $F$ again has the claimed structure with $\beta>0$.

We now turn to the moment generating function. Fix $u\in\R$ and $t\ge0$, and define $M_s=\Phi(u,t-s)\Psi(u,t-s)^{X_s}$, $s\in[0,t]$. A calculation using that $\Phi$ and $\Psi$ solve the Riccati equations yields that $M$ is a martingale with $M_t=e^{uX_t}$. Hence
\[
\E_x[e^{uX_t}] = \E_x[ M_t ]  = M_0 = \Phi(u,t)\Psi(u,t)^x,
\]
as claimed.
\end{proof}

Observe that, while the affine processes in Proposition~\ref{p:1d} do admit closed-form moment generating and characteristic functions, there is no exponential-affine transform formula. Indeed, classically one would write
\[
\Phi(u,t)=e^{\phi(u,t)} \qquad\text{and}\qquad  \Psi(u,t)=e^{\psi(u,t)},
\]
where $\phi$ and $\psi$ solve (generalized) Riccati equations. Here this is not possible in general, as can be seen by taking $N=1$, $\alpha=1$, and $\beta=0$. In this case $\Psi(u,t)=1+(e^u-1)e^{-t}$, which vanishes for $u={\rm i}\pi+\log(e^t-1)$.

\paragraph{Dimension $d=2$, case $k=1$.}  Up to an affine transformation Theorem~\ref{T:main} yields $E\subseteq \mathbb N^k\times\mathbb R^{2-k}$ for some $k\in\{0,1,2\}$. Again $k=0$ means that $X$ is deterministic, so we ignore this case. If $k=1$, then $X=(Y,Z)$ where the first component $Y$ is itself a one-dimensional non-deterministic affine process, and therefore of the form described in Proposition~\ref{p:1d} with state space $\{0,\ldots,N\}$ for some nonzero $N\in\N$ and jump kernel $F^Y(y,\cdot) = y\alpha\delta_{-1} + (N-y)\beta\delta_1$ for some parameters $\alpha>0$ and $\beta\ge0$. The second component of $Z$ may or may not jump, depending on whether $\beta$ is zero or not. The following proposition gives the precise statement.

\begin{proposition}\label{p:k=1_new}
Consider the affine process $X=(Y,Z)$ just described, with $F^Y(y,\cdot) = y\alpha\delta_{-1} + (N-y)\beta\delta_1$ for some $\alpha>0$ and $\beta\ge0$. If $\beta>0$, then up to an affine transformation of $X$, $Z$ is continuous and we have
\[
E=\{0,\ldots,N\}\times E^Z
\]
for some compact subset $E^Z\subseteq\R$. If instead $\beta=0$, then $Z$ need not be continuous, but can jump at most $y$ times $\P_{(y,z)}$-a.s.~for each $(y,z)\in E$.
\end{proposition}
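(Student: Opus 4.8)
The plan is to split according to whether $\beta=0$ or $\beta>0$. The case $\beta=0$ is essentially immediate: here $F^Y(y,\cdot)=y\alpha\delta_{-1}$, and since $Y$ takes values in the discrete set $\{0,\dots,N\}$ it has no drift, so under $\P_{(y,z)}$ the component $Y$ is a non-increasing pure-jump process starting at $y$ whose only jump is $-1$; hence it makes at most $y$ jumps. Since $Z$ can only jump when $Y$ does --- Theorem~\ref{T:JM}\ref{T:JM:3}, applied with $k=1$ to $X=(Y,Z)$ --- the process $Z$ likewise makes at most $y$ jumps $\P_{(y,z)}$-a.s. No further argument is needed; that $Z$ may genuinely be discontinuous is precisely because, with $\beta=0$, there is no up-jump of $Y$ to pair a $Z$-jump against, in contrast to the mechanism used below.

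For $\beta>0$ I would first pin down the set $S$ of possible jump sizes of $X$. By Theorem~\ref{T:JM}\ref{T:JM:3} every jump of $X$ is a jump of $Y$, and $Y$ only jumps by $\pm1$, so each $u\in S$ has first coordinate $-1$ or $+1$; write $S=S_-\cup S_+$ for the corresponding split. Both pieces are nonempty: from a state of $E$ with first coordinate $N$ (which exists since $Y$ has state space $\{0,\dots,N\}$), $X$ can only jump with first coordinate $-1$, at rate $N\alpha>0$, so $S_-\ne\emptyset$; likewise $S_+\ne\emptyset$, using a state with first coordinate $0$ and $N\beta>0$. Fix $u\in S_-$ and $v\in S_+$. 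By Lemma~\ref{L:F1}\ref{L:F1:3} together with Theorem~\ref{T:JM}\ref{T:JM:4} (with $k=1$), every normalized jump counter of $X$ is affine in the coordinate $\pi_1$, and the normalization $\psi_w(w)-\psi_w(0)=-1$ then forces the linear part of $\psi_w$ to be $x\mapsto -x_1/w_1$, where $w_1$ denotes the first coordinate of $w$. Evaluating the linear part of $\psi_u$ at $v$ and conversely, and using $u_1=-1$ and $v_1=+1$, gives $\psi_u(v)-\psi_u(0)=\psi_v(u)-\psi_v(0)=1$. So in Proposition~\ref{P:JC repr} we are in the case where both of the quantities there called $\alpha$ and $\beta$ equal $+1$; this forces alternative~\ref{P:JC repr:alt2}, namely $v=-u$. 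Hence $S_-$ and $S_+$ are singletons, and $S=\{u_*,-u_*\}$ with $u_*=(-1,\zeta_0)$ for some $\zeta_0\in\R$.

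I would then apply the invertible affine map $(y,z)\mapsto(y,z+\zeta_0 y)$. It fixes the first coordinate (so $Y$, its state space $\{0,\dots,N\}$, and $\alpha,\beta$ are unchanged) and it sends $u_*$ and $-u_*$ to $(-1,0)$ and $(1,0)$, so the new second component $Z$ has no jumps. Linear maps preserve the second characteristic, which vanishes by Theorem~\ref{T:diff=0}, so $Z$ has no diffusion either; being a jumpless semimartingale with vanishing second characteristic, $Z$ is continuous, with $Z_t=Z_0+\int_0^t b^Z(Y_s,Z_s)\,ds$ for the affine $Z$-component $b^Z$ of the drift of $X$. Moreover, since now $S=\{(-1,0),(1,0)\}$ while the $\pi_1$-pushforward of $F(x,\cdot)$ must still be $F^Y(y,\cdot)=y\alpha\delta_{-1}+(N-y)\beta\delta_1$, one reads off $F(x,\cdot)=y\alpha\delta_{(-1,0)}+(N-y)\beta\delta_{(1,0)}$ for all $x=(y,z)\in E$, so the total jump rate at any such point is the constant $q(y)=y\alpha+(N-y)\beta$, which is strictly positive for every $y\in\{0,\dots,N\}$ since $\alpha,\beta>0$ and $N\ge1$. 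For the product structure, fix $(y,z)\in E$ and $\delta>0$, and set $C=\sup_E|b^Z|<\infty$. Up to its first jump $X=(y,Z_t)$ with $Z$ solving a fixed ordinary differential equation, and the jumps arrive at the constant rate $q(y)>0$ with direction chosen independently; hence with strictly positive $\P_{(y,z)}$-probability the first jump occurs at some time $\tau\le\delta$ and moves $Y$ to $y+1$ (when $y<N$), at which instant $|Z_\tau-z|\le C\tau\le C\delta$. So $E$ contains a point $(y+1,z')$ with $|z'-z|\le C\delta$; letting $\delta\downarrow0$ and using that $E$ is closed gives $(y+1,z)\in E$, and symmetrically $(y-1,z)\in E$ when $y>0$. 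Iterating one step at a time, up to $N$ and down to $0$, yields $(y',z)\in E$ for every $y'\in\{0,\dots,N\}$; since also $\pi_Y(E)=\{0,\dots,N\}$, this gives $E=\{0,\dots,N\}\times E^Z$ with $E^Z=\pi_Z(E)$ compact.

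The main obstacle is the identification $S=\{u_*,-u_*\}$ for $\beta>0$: one must compute $\psi_u(v)-\psi_u(0)$ and $\psi_v(u)-\psi_v(0)$ carefully enough that Proposition~\ref{P:JC repr} applies and only its alternative~\ref{P:JC repr:alt2} survives. The subsequent affine change of variables killing the $Z$-jumps, and the limiting argument (exploiting that $E$ is closed and that the first jump time is $\le\delta$ with positive probability) for the product form of $E$, are then routine.
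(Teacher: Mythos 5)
Your proposal is correct. The $\beta=0$ case coincides with the paper's. For $\beta>0$ you reach the same conclusion by a genuinely different route through the paper's machinery: where the paper first shears so that $\min F_0=\min F_1=0$, identifies the two jump counters geometrically via $\{N\}\times\R\subseteq\ker\psi_u$ and $\{0\}\times\R\subseteq\ker\psi_v$, deduces $u_2,v_2\ge0$ from the layer inclusions $F_0+u\subseteq F_1$ and $F_1+v\subseteq F_0$, and kills the residual $u+v=(0,\lambda)$ by iterating $F_0+(0,n\lambda)\subseteq F_0$ against compactness, you instead observe (via Lemma~\ref{L:F1}\ref{L:F1:3} and Theorem~\ref{T:JM}\ref{T:JM:4} with $k=1$) that every normalized jump counter is affine in $\pi_1$ alone, compute the interaction integers of Proposition~\ref{P:JC repr} to be $\alpha=\beta=1$ for any down-jump/up-jump pair, and let alternative~\ref{P:JC repr:alt2} deliver $v=-u$ at once; the shear then comes afterwards. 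This is economical because Proposition~\ref{P:JC repr} already encapsulates the compactness argument the paper redoes by hand, and it cleanly shows $S_\pm$ are singletons. For the product structure $E=\{0,\ldots,N\}\times E^Z$ the paper argues combinatorially ($\psi_{(1,0)}=N-\pi_1$, so $(y,z)\in E$ with $y<N$ immediately gives $(y+1,z)\in E$, whence $F_j\subseteq F_{j\pm1}$), whereas you use a probabilistic first-jump argument plus closedness of $E$; this is valid but invokes the pathwise description of the finite-activity jump process (first jump time, independent jump direction, deterministic flow in between), which is heavier than the one-line jump-counter argument available to you at that stage. Everything checks out; the only places where I would ask for a little more care are the justification that normalized jump counters of the \emph{transformed} process are affine in $\pi_1$ (composition with $T^{-1}$ plus the uniqueness in Lemma~\ref{L:JC_prop}), and the standard but unstated identification of the pushforward of $F(x,\cdot)$ under $\xi\mapsto\xi_1$ with $F^Y(x_1,\cdot)$, which is what confines $S$ to $\{\xi_1=\pm1\}$.
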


\begin{proof} 
Suppose $\beta>0$. There are compact subsets $F_0,\ldots,F_N\subseteq\R$ such that $E = \bigcup_{j=0}^N\{j\}\times F_j$. Let $f_j=\min F_j$ for $j=0, \dots, N$, and define $\phi(y,z)=z-f_0-y(f_1-f_0)$ for any $(y,z)\in\R^2$. Then the map $(y,z)\mapsto(y,\phi(y,z))$ is affine and invertible, and the process $(Y,\phi(Y,Z))$ is affine with state space $\bigcup_{j=0}^N\{j\}\times G_j$, where $G_j=F_j-f_0+j(f_1-f_0)$. In particular, $\min G_0=\min G_1=0$. Moreover, this process still satisfies the properties of Theorem~\ref{T:main} (with $Z$ replaced by $\phi(Y,Z)$). Therefore, we suppose already from the outset that
\[
\text{$\min F_0=\min F_1 = 0$ and the map $T$ in Theorem~\ref{T:main} is the identity.}
\]
This initial transformation is illustrated in Figure~\ref{fig1}.

Now, pick $u=(u_1,u_2)\in S$. Since $Y$ jumps by $\pm1$, we have $u_1\in\{-1,0,1\}$. But due to \eqref{T:JM:3_1}, we cannot have $u_1=0$ and $u_2\ne0$, so in fact $u_1\in\{-1,1\}$. Suppose $u_1=1$ and let $\psi_u$ be the associated normalized jump counter. Since $Y$ does not jump in direction $u_1=1$ when $Y_t=N$, \eqref{T:JM:3_1} implies that $Z$ cannot jump when $Y_t=N$. Thus $\{N\}\times\R\subseteq\ker\psi_u$, which together with the normalization condition uniquely determines $\psi_u$. Similarly, for $v\in S$ with $v_1=-1$ we have $\{0\}\times\R\subseteq\ker\psi_v$, and this uniquely determines $\psi_v$. It follows that $\psi_u$ and $\psi_v$ are the only normalized jump counters.

Next, since $\psi_u(0,z)>0$ for all $z\in F_0$, we have $F_0+u\subseteq F_1$, and similarly $F_1+v\subseteq F_0$. Since $\min F_0=\min F_1=0$ this yields $u_2\ge0$ and $v_2\ge0$, and hence $u+v=(0,\lambda)$ for some $\lambda\ge0$. But since $F_0 + (u+v) \subseteq F_0$, and hence $F_0 + (0,n\lambda)\subseteq F_0$ for all $n\in\N$, compactness of $F_0$ forces $\lambda=0$. Thus $u=(1,0)$, $v=(-1,0)$, $S=\{u,v\}$, and it follows that $Z$ is continuous. It also follows that $F_j=F_0$ for all $j=1,\ldots,N$, since otherwise a jump in direction $u$ or $v$ would for some point lead out of the state space. This completes the proof of the case $\beta>0$.

Suppose $\beta=0$. Then $Y$ only has downward jumps of unit size and stops when it reaches zero. Thus $Y$ jumps exactly $y$ times, $\P_{(y,z)}$-a.s., for each $(y,z)\in E$. Due to \eqref{T:JM:3_1}, $Z$ thus jumps at most $y$ times, $\P_{(y,z)}$-a.s.
\end{proof}

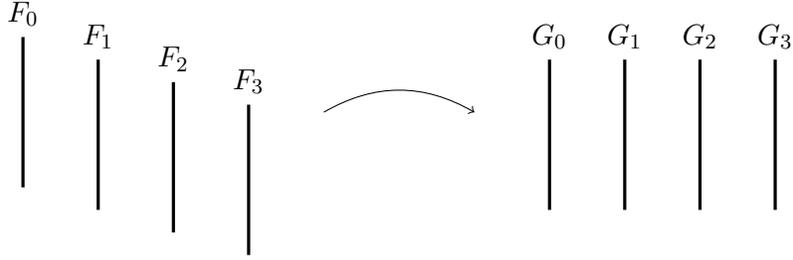
\begin{figure}
\begin{center}
\begin{tikzpicture}
\path[->] (4,1) edge [bend left] node[above] {} (6,1);
\draw[very thick,-] (0,0) -- (0,2) node[above]{$F_0$};
\draw[very thick,-] (1,-0.3) -- (1,1.7) node[above]{$F_1$};
\draw[very thick,-] (2,-0.6) -- (2,1.4) node[above]{$F_2$};
\draw[very thick,-] (3,-0.9) -- (3,1.1) node[above]{$F_3$};
\draw[very thick,-] (7,-0.3) -- (7,1.7) node[above]{$G_0$};
\draw[very thick,-] (8,-0.3) -- (8,1.7) node[above]{$G_1$};
\draw[very thick,-] (9,-0.3) -- (9,1.7) node[above]{$G_2$};
\draw[very thick,-] (10,-0.3) -- (10,1.7) node[above]{$G_3$};
\end{tikzpicture}
\caption{The map $(y,z)\mapsto(y,\phi(y,z))$ in the proof of Proposition~\ref{p:k=1_new}.}
\label{fig1}
\end{center}
\end{figure}

We now give two examples corresponding to the two cases $\beta>0$ and $\beta=0$ in Proposition~\ref{p:k=1_new}. The first example shows that while $Z$ is continuous, its drift may still depend on the jump component $Y$. The second example shows that for $\beta=0$, the layers $F_0,\dots,F_N$ need not be equal, and the jumps of $Z$ cannot be eliminated by applying invertible affine transformations.

\begin{example}
Let $N\in\mathbb N$ be nonzero and set $E=\{0,\dots, N\}\times [0,1]$. Define 
\[
F((y,z),\cdot) = y\delta_{(-1,0)} + (N-y)\delta_{(1,0)} \qquad\text{and}\qquad b(y,z) = (0,y/N-z).
\]
For each $x=(y,z)\in E$, the law $\P_x$ of the canonical process $X=(Y,Z)$ is specified as follows. First, $Y$ is the continuous time Markov chain with state space $\{0,\dots,N\}$, jump intensity measure
\[
F^Y(y,\cdot) = y\delta_{-1} + (N-y)\delta_{1},
\]
and initial condition $Y_0=y$. Next, $Z$ is the solution of the equation
\[
dZ_t = b(Y_t,Z_t)dt
\]
with initial condition $Z_0=z$. Then $X=(Y,Z)$ is affine as in Proposition~\ref{p:k=1_new} with $\beta>0$.
\end{example}

\begin{example} \label{ex:k1}
Let $N\in\mathbb N$ be nonzero and set $E=\bigcup_{j=0}^N\{j\}\times [0,N-j]$. Define
\[
F((y,z),\cdot) = y(\delta_{-1}\otimes\lambda) \qquad\text{and}\qquad    b(y,z) = (0,-z),
\]
where $\lambda$ denotes Lebesgue measure on $[0,1]$. For each $x=(y,z)\in E$, the law $\P_x$ of the canonical process $X=(Y,Z)$ is specified as follows. Before the first jump and starting from $z\in[0,N-y]$, $Z$ satisfies the equation $dZ_t = -Z_tdt$. When the first jump occurs, which happens with intensity $y$, $Y$ jumps from $y$ to $y-1$ and $Z$ makes a positive jump of uniformly distributed size. The process lands in the layer $\{y-1\}\times[0,N-y+1]$, and $Z$ continues to perform its downward motion until the next jump, which happens with intensity $y-1$, and so on. The resulting process is affine as in Proposition~\ref{p:k=1_new} with $\beta=0$. Moreover, the set $S$ of possible jump sizes affinely spans $\R^2$, so it is clear that no invertible affine transformation can eliminate the jumps in one of the components. See Figure~\ref{fig2} for an illustration.
\end{example}

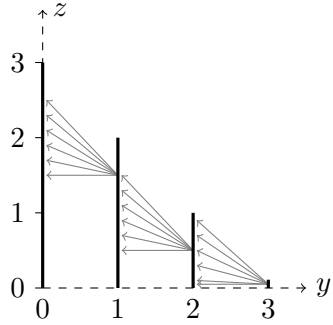
\begin{figure}
\begin{center}
\begin{tikzpicture}
\draw[thin,dashed,->] (0,0) -- (0,3.7) node[right] {$z$};
\draw[thin,dashed,->] (0,0) -- (3.5,0) node[right] {$y$};
\path[gray,->] (1,1.5) edge (0.05,2.5);
\path[gray,->] (1,1.5) edge (0.05,2.3);
\path[gray,->] (1,1.5) edge (0.05,2.1);
\path[gray,->] (1,1.5) edge (0.05,1.9);
\path[gray,->] (1,1.5) edge (0.05,1.7);
\path[gray,->] (1,1.5) edge (0.05,1.5);
\path[gray,->] (2,0.5) edge (1.05,1.5);
\path[gray,->] (2,0.5) edge (1.05,1.3);
\path[gray,->] (2,0.5) edge (1.05,1.1);
\path[gray,->] (2,0.5) edge (1.05,0.9);
\path[gray,->] (2,0.5) edge (1.05,0.7);
\path[gray,->] (2,0.5) edge (1.05,0.5);
\path[gray,->] (3,0.05) edge (2.05,0.9);
\path[gray,->] (3,0.05) edge (2.05,0.7);
\path[gray,->] (3,0.05) edge (2.05,0.5);
\path[gray,->] (3,0.05) edge (2.05,0.3);
\path[gray,->] (3,0.05) edge (2.05,0.1);
\path[gray,->] (3,0.05) edge (2.05,0.05);
\draw[thin,-] (0,0) -- (-0.1,0) node[left] {$0$};
\draw[thin,-] (0,1) -- (-0.1,1) node[left] {$1$};
\draw[thin,-] (0,2) -- (-0.1,2) node[left] {$2$};
\draw[thin,-] (0,3) -- (-0.1,3) node[left] {$3$};
\draw[very thick,-] (0,0) node[below]{$0$} -- (0,3) ;
\draw[very thick,-] (1,0) node[below]{$1$} -- (1,2) ;
\draw[very thick,-] (2,0) node[below]{$2$} -- (2,1) ;
\draw[very thick,-] (3,0) node[below]{$3$} -- (3,0.11) ;
\end{tikzpicture}
\caption{The process in Example~\ref{ex:k1} jumps from any layer $\{j\}\times[0,N-j]$ to the next layer to the left. The vertical component of the jump size is standard uniform. Within a layer, the process performs a downward linear drift motion.}
\label{fig2}
\end{center}
\end{figure}

\paragraph{Dimension $d=2$, case $k=2$.} It remains to consider the case where, up to an affine transformation, we have $E\subseteq\N^2$. That is, we assume $X=T(X)=Y$ in Theorem~\ref{T:main}, so that $k=2$ and the coordinate projections $\pi_j\colon x\mapsto x_j$, $j=1,2$, are normalized jump counters. The following result classifies this situation, and shows that there are three possibilities.

\begin{theorem}\label{t:jumps in 2d}
Assume $X=(X_1,X_2)$ is affine with state space $E\subseteq\N^2$, and that $\pi_1, \pi_2$ are normalized jump counters. Then $X$ is of one of the following three types:
\begin{enumerate}
  \item\label{t:jumps in 2d:1} Up to a further affine transformation, $E$ has a layer structure in the sense that $E_0=\{x\in E\colon x_2=0\}$ is stochastically invariant and $\pi_2(u)\leq 0$ for every $u\in S$ so that there are no upward jumps. Thus there are at most $N=\max\{x_2\colon x\in E\}$ downward jumps, after which the process arrives in $E_0$ and stays there. Moreover, we have
   $$ \{(-1,0)\}\subseteq S \subseteq \{(-1,0),(1,0)\}\cup\{(K,-1):K\in\mathbb N\}. $$
  \item\label{t:jumps in 2d:2} $X_1$ and $X_2$ are independent affine processes, and
   $$ \{(-1,0),(0,-1)\} \subseteq S \subseteq \{(-1,0),(0,-1),(1,0),(0,1)\}. $$
     All normalized jump counters apart from $\pi_1$ and $\pi_2$ are of the form $N-\pi_1$ or $K-\pi_2$ for some $N,K\in\mathbb N$.
  \item\label{t:jumps in 2d:3} The set of normalized jump counters is either $\{\pi_1,\pi_2\}$ or $\{\pi_0,\pi_1,\pi_2\}$, where $\pi_0=N-\pi_1-\pi_2$ for some $N\in\mathbb N$. The set of possible jump sizes $S$ satisfies 
  $$S\subseteq\{(-1,0),(-1,1),(0,-1),(0,1),(1,-1),(1,0)\}.$$
\end{enumerate}
\end{theorem}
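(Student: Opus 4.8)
The plan is to reduce the theorem to a finite combinatorial analysis driven by Proposition~\ref{P:JC repr}. Since $E\subseteq\N^2$ is compact it is finite, so $S$ is finite as well (for $x\in E$ one has $\supp F(x,\cdot)\subseteq E-x$). Because $\pi_1$ and $\pi_2$ are normalized jump counters, fix $u,v\in S$ with $\psi_u=\pi_1$ and $\psi_v=\pi_2$; the normalization condition forces the first coordinate of $u$ and the second coordinate of $v$ to equal $-1$. I would first run Proposition~\ref{P:JC repr} on the pairs $(u,w)$ and $(v,w)$ for arbitrary $w\in S$: since $\psi_u(w)-\psi_u(0)=\pi_1(w)-\pi_1(0)$ is just the first coordinate $w_1$, and similarly the other quantity is $w_2$, the three alternatives translate into: $w_1,w_2\ge-1$; $w_1$ and $w_2$ are not both $-1$ (this would give $\pi_1=\pi_2$); $w_1=-1$ forces $\psi_w=\pi_1$; and $w_2=-1$ forces $\psi_w=\pi_2$. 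Thus $S=S_1\sqcup S_2\sqcup S_0$, where $S_1=\{w\in S:w_1=-1\}$ (all elements having counter $\pi_1$), $S_2=\{w\in S:w_2=-1\}$ (counter $\pi_2$), and $S_0=\{w\in S:w_1,w_2\ge0,\ w\ne0\}$, with $S_1,S_2\ne\emptyset$.

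The second step pins down $S_1,S_2,S_0$ and the list of all normalized jump counters. Pairing a generic $w=(-1,m)\in S_1$ with a generic $w'=(m',-1)\in S_2$ (here $\alpha=m'$ and $\beta=m$ in the notation of the proposition) leaves only: $(m,m')=(1,1)$ with $\{w,w'\}=\{(-1,1),(1,-1)\}$; or $m=0$; or $m'=0$. Hence a ``diagonal'' jump can only be $(-1,1)$ or $(1,-1)$, and if one occurs then $S_1\subseteq\{(-1,0),(-1,1)\}$ and $S_2\subseteq\{(0,-1),(1,-1)\}$. For $S_0$, pairing an element $w$ with $u$ and with $v$, and combining the resulting identities with the fact that any normalized jump counter equals $c_0+c_1\pi_1+c_2\pi_2$ (Lemma~\ref{L:F1}\ref{L:F1:3}) and takes values in $\N$ on $E$ (Lemma~\ref{L:JC_prop}\ref{L:JC_prop:1}), forces $S_0\subseteq\{(1,0),(0,1)\}$ and pins every normalized jump counter other than $\pi_1,\pi_2$ to be of one of the forms $N-\pi_1$, $K-\pi_2$, or $N-\pi_1-\pi_2$ for suitable $N,K\in\N$. (For instance a jump with a coordinate $\ge2$ would force half-integer values of its counter on two adjacent points of $E$, contradicting integrality once one checks such adjacent points exist.)

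The third step assembles the trichotomy. If some diagonal jump is present, the above yields $S\subseteq\{(\pm1,0),(0,\pm1),(-1,1),(1,-1)\}$, and the counter list is $\{\pi_1,\pi_2\}$ or $\{\pi_0,\pi_1,\pi_2\}$ with $\pi_0=N-\pi_1-\pi_2$ --- this is case~\ref{t:jumps in 2d:3}. If no diagonal jump occurs and $S\subseteq\{(\pm1,0),(0,\pm1)\}$, write $F(x,\cdot)$ through~\eqref{P:JC repr:1} as a sum over jump directions whose coefficients are the corresponding normalized jump counters at $x$; since those counters are $\pi_1,N-\pi_1$ (functions of $x_1$ only) for the horizontal directions and $\pi_2,K-\pi_2$ (functions of $x_2$ only) for the vertical ones, the two coordinates decouple and $X_1,X_2$ are independent affine processes --- case~\ref{t:jumps in 2d:2}. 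Otherwise $S_1=\{(-1,0)\}$, $S_0\subseteq\{(1,0)\}$, and $S_2\subseteq\{(K,-1):K\in\N\}$; then $\pi_2(w)\le0$ for all $w\in S$, so $X_2$ is nonincreasing, makes at most $N=\max_{x\in E}x_2$ jumps, no jump can leave or enter $E_0=\{x\in E:x_2=0\}$, and $E_0$ is stochastically invariant --- case~\ref{t:jumps in 2d:1}. I expect the main obstacle to lie in the second step: ruling out all ``long'' jumps and all exotic intermediate configurations, so that $S$ is genuinely confined to the short finite lists above, requires a careful joint use of Proposition~\ref{P:JC repr}, the integrality of Lemma~\ref{L:JC_prop}, the ``no jump leaves $E$'' property, and compactness; once that bookkeeping is in place, the trichotomy and the auxiliary claims (invariance of $E_0$ in case~\ref{t:jumps in 2d:1}, independence in case~\ref{t:jumps in 2d:2}, the precise form of $\pi_0$ in case~\ref{t:jumps in 2d:3}) follow with relatively little extra work.
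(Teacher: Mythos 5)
Your step 1 is correct and matches the paper's starting point: pairing an arbitrary $w\in S$ with jumps whose counters are $\pi_1$ and $\pi_2$ does yield the decomposition $S=S_1\sqcup S_2\sqcup S_0$ with $w_1=-1\Rightarrow\psi_w=\pi_1$ and $w_2=-1\Rightarrow\psi_w=\pi_2$. The gap is in step 2. Alternative~\ref{P:JC repr:alt3} of Proposition~\ref{P:JC repr} only requires \emph{one} of $\alpha,\beta$ to vanish while the other may be \emph{any} natural number. So pairing $w=(-1,0)\in S_1$ with $w'=(K,-1)\in S_2$ gives $\alpha=K$, $\beta=0$, which is perfectly consistent for every $K\in\N$. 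Your inference ``hence a diagonal jump can only be $(-1,1)$ or $(1,-1)$'' is therefore a non sequitur: it only follows when \emph{both} members of the pair have nonzero off-coordinate. Indeed such long jumps genuinely occur --- Example~\ref{ex:d2k2:1} has $(2,-1),(3,-1)\in S$ with $\pi_1,\pi_2$ both normalized jump counters --- so no combinatorial argument can rule them out. Your parenthetical (a coordinate $\ge2$ forces half-integer counter values) only bites for jumps in $S_0$ whose \emph{own} counter would need a coefficient $-1/K$; it says nothing about $(K,-1)\in S_2$, whose counter is $\pi_2$ and is integer-valued everywhere. The same problem recurs inside $S_1$: Proposition~\ref{P:JC repr} does not prevent $(-1,0)$ and $(-1,5)$ from coexisting (both have counter $\pi_1$, giving $\alpha=\beta=-1$), nor does it force $S_1\subseteq\{(-1,0),(-1,1)\}$ even when a diagonal jump is present, since the constraining pair may fall into alternative~\ref{P:JC repr:alt3} with the zero on the other side.

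Your step 3 then quietly contradicts step 2 by admitting $S_2\subseteq\{(K,-1)\colon K\in\N\}$, and even there it only covers the configuration where the long jumps happen to sit in $S_2$ with $S_1=\{(-1,0)\}$; a jump such as $(-1,2)\in S_1$ (which also leads to case~\ref{t:jumps in 2d:1}, but only \emph{after a further affine transformation} swapping the roles of the coordinates) is not handled. What is missing is precisely the content of the paper's Lemma~\ref{l:large jumps}: if some $u\in S$ has $\max\{|u_1|,|u_2|\}\ge 2$, one shows (i) every $v\in S$ satisfies $\psi_u(v)-\psi_u(0)\in\{-1,0\}$, and (ii) at most one direction $\pm v$ has $\psi_u(v)-\psi_u(0)=0$; applying the invertible map $T=(\pi_1,\psi_u)$ or $(\pi_2,\psi_u)$ then produces the layer structure of case~\ref{t:jumps in 2d:1}. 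Only \emph{after} disposing of large jumps this way can one restrict to $\|u\|_\infty=1$ and run the finite check you describe (which is essentially the paper's Lemmas~\ref{l:two jump kernels} and~\ref{l:more jump kernels} and the final case analysis). As written, your argument both overclaims (ruling out configurations that exist) and underclaims (omitting the extra affine transformation that the statement of case~\ref{t:jumps in 2d:1} explicitly requires), so the trichotomy is not established.
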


Before giving the proof of Theorem~\ref{t:jumps in 2d} we give three examples to illustrate the three possibilities. First, Case~\ref{t:jumps in 2d:2} is easily constructed by taking two independent one-dimensional affine jump-processes as in Proposition~\ref{p:1d}. Next, we consider an example of Case~\ref{t:jumps in 2d:1}.

\begin{example} \label{ex:d2k2:1}
  We construct an affine process on a state space with three layers $\{x_2=2\}$, $\{x_2=1\}$, and $\{x_2=0\}$. The situation is illustrated in Figure~\ref{fig3}. Let
  \begin{align*} 
    E&=\{(0,2),(0,1),(1,1),(2,1),(3,1),(0,0),(1,0),(2,0),(3,0),(4,0),(5,0),(6,0),(7,0)\}, \\
    S&=\{(-1,0),(0,-1),(2,-1),(3,-1)\}, \\
    \psi_u&= \begin{cases} \pi_1, & u=(-1,0), \\ \pi_2, & \text{any other $u\in S$,} \end{cases} \\
    F(x,\cdot)&= \sum_{u\in S} \psi_u(x)\delta_u, \quad x\in\R^d.
  \end{align*}
 Then there is an affine process with triplet $(0,0,F)$ and state space $E$. Note that one could add points $(n,0)$, $n=8,9,\ldots$, to the state space and still obtain an affine process. Note also that the process eventually gets trapped in $E_0=\{x_2=0\}$, and in this particular example even in $(0,0)$.
\end{example}

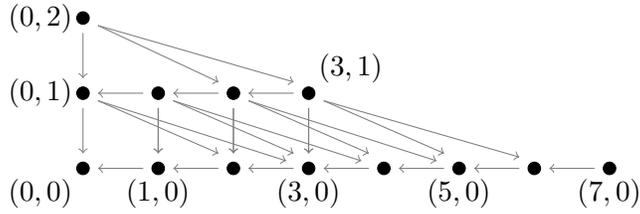
\begin{figure}
\begin{center}
\begin{tikzpicture}
\draw[black,fill=black] (0,2) circle (.5ex) node[left] {$(0,2)$};
\draw[black,fill=black] (0,1) circle (.5ex) node[left] {$(0,1)$};
\draw[black,fill=black] (1,1) circle (.5ex);
\draw[black,fill=black] (2,1) circle (.5ex);
\draw[black,fill=black] (3,1) circle (.5ex) node[above right] {$(3,1)$};
\draw[black,fill=black] (0,0) circle (.5ex) node[below left] {$(0,0)$};
\draw[black,fill=black] (1,0) circle (.5ex) node[below] {$(1,0)$};
\draw[black,fill=black] (2,0) circle (.5ex);
\draw[black,fill=black] (3,0) circle (.5ex) node[below] {$(3,0)$};
\draw[black,fill=black] (4,0) circle (.5ex);
\draw[black,fill=black] (5,0) circle (.5ex) node[below] {$(5,0)$};
\draw[black,fill=black] (6,0) circle (.5ex);
\draw[black,fill=black] (7,0) circle (.5ex) node[below] {$(7,0)$};

\path[gray,->] (0,2-.2) edge (0,1+.2);
\path[gray,->] (0,1-.2) edge (0,0+.2);
\path[gray,->] (1,1-.2) edge (1,0+.2);
\path[gray,->] (2,1-.2) edge (2,0+.2);
\path[gray,->] (3,1-.2) edge (3,0+.2);
\path[gray,->] (1,1-.2) edge (1,0+.2);
\path[gray,->] (2,1-.2) edge (2,0+.2);
\path[gray,->] (3,1-.2) edge (3,0+.2);

\path[gray,->] (1-.2,1) edge (0+.2,1);
\path[gray,->] (2-.2,1) edge (1+.2,1);
\path[gray,->] (3-.2,1) edge (2+.2,1);

\path[gray,->] (1-.2,0) edge (0+.2,0);
\path[gray,->] (2-.2,0) edge (1+.2,0);
\path[gray,->] (3-.2,0) edge (2+.2,0);
\path[gray,->] (4-.2,0) edge (3+.2,0);
\path[gray,->] (5-.2,0) edge (4+.2,0);
\path[gray,->] (6-.2,0) edge (5+.2,0);
\path[gray,->] (7-.2,0) edge (6+.2,0);

\path[gray,->] (0+.2,2-.1) edge (2-.2,1+.15);
\path[gray,->] (0+.2,2-.1) edge (3-.2,1+.15);

\path[gray,->] (3+.2,1-.1) edge (5-.2,0+.2);
\path[gray,->] (3+.2,1-.1) edge (6-.2,0+.15);

\path[gray,->] (2+.2,1-.1) edge (4-.2,0+.2);
\path[gray,->] (2+.2,1-.1) edge (5-.3,0+.1);

\path[gray,->] (1+.2,1-.1) edge (3-.2,0+.2);
\path[gray,->] (1+.2,1-.1) edge (4-.3,0+.1);

\path[gray,->] (0+.2,1-.1) edge (2-.2,0+.2);
\path[gray,->] (0+.2,1-.1) edge (3-.3,0+.1);
\end{tikzpicture}
\caption{The process in Example~\ref{ex:d2k2:1}.}
\label{fig3}
\end{center}
\end{figure}

Finally, we give an example of Case~\ref{t:jumps in 2d:3}. In fact, we provide a more general example of a $d$-dimensional affine process with compact and irreducible state space and no autonomous components. We say that a $d$-dimensional affine process $X$ has {\em no autonomous components} if there is no invertible affine map $T\colon\R^d\to\R^d$ such that $(T(X)_1,\ldots,T(X)_k)$ is itself an affine process for some $k\le d-1$.

\begin{example}\label{ex:ES}
  Let $d,N\in\mathbb N$ with $d,N\geq 1$ and define 
  \begin{align*}
    E&=\Big\{x\in\mathbb N^d\colon \sum_{j=1}^d x_j\leq N\Big\}, \\
    \pi_0 &= N - \sum_{j=1}^d \pi_j, \\
    S &= \{ e_j-e_k\colon j,k=0,\dots,d,j\neq k\}, \\
  \end{align*}
  where as usual $\pi_j\colon x\mapsto x_j$ are the coordinate projections, $e_0=0$, and $e_j$ is the $j$th canonical unit vector for $j=1,\ldots,d$. The state space $E$ consists of the points of the solid simplex with integer coordinates. For each element $u=e_j-e_k\in S$, define also $\psi_u=\pi_k$. Let $\lambda\colon S\to (0,\infty)$ be arbitrary, and define
 $$ F(x,\cdot) = \sum_{u\in S}  \lambda(u)\psi_u(x)\delta_u , \quad x\in\R^d. $$
Then $F(x,\cdot)$ is a finite measure concentrated on $E-x$ for any $x\in E$, and $x\mapsto F(x,\cdot)$ is affine. It is then straightforward to construct an affine process $X$ with triplet $(0,0,F)$ and state space $E$. The set $S$ is the set of its possibly jump sizes, and $\psi_u$ is the normalized jump-counter corresponding to $u\in S$. Moreover, $X$ has compact and irreducible state space and no autonomous components. With the special choices $d=2$ and $N=3$ we obtain the situation illustrated in Figure~\ref{fig4}, and the set of possible jump sizes is
\begin{equation} \label{eq:ES1}
S = \{(-1,0),(-1,1),(0,-1),(0,1),(1,-1),(1,0)\}.
\end{equation}
If we allow $\lambda(u)=0$ additionally for some $u$, then the corresponding jump does not occur, and the set of possible jump sizes becomes a subset of the one given in~\eqref{eq:ES1}.
\end{example}

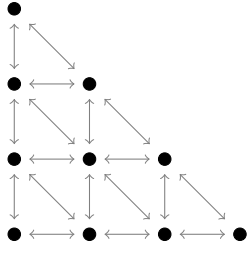
\begin{figure}
\begin{center}
\begin{tikzpicture}
\draw[black,fill=black] (0,3) circle (.5ex);
\draw[black,fill=black] (0,2) circle (.5ex);
\draw[black,fill=black] (0,1) circle (.5ex);
\draw[black,fill=black] (0,0) circle (.5ex);
\draw[black,fill=black] (1,2) circle (.5ex);
\draw[black,fill=black] (1,1) circle (.5ex);
\draw[black,fill=black] (1,0) circle (.5ex);
\draw[black,fill=black] (2,1) circle (.5ex);
\draw[black,fill=black] (2,0) circle (.5ex);
\draw[black,fill=black] (3,0) circle (.5ex);

\path[gray,<->] (0,3-.2) edge (0,2+.2);
\path[gray,<->] (0,2-.2) edge (0,1+.2);
\path[gray,<->] (0,1-.2) edge (0,0+.2);
\path[gray,<->] (1,2-.2) edge (1,1+.2);
\path[gray,<->] (1,1-.2) edge (1,0+.2);
\path[gray,<->] (2,1-.2) edge (2,0+.2);

\path[gray,<->] (3-.2,0) edge (2+.2,0);
\path[gray,<->] (2-.2,0) edge (1+.2,0);
\path[gray,<->] (1-.2,0) edge (0+.2,0);
\path[gray,<->] (2-.2,1) edge (1+.2,1);
\path[gray,<->] (1-.2,1) edge (0+.2,1);
\path[gray,<->] (1-.2,2) edge (0+.2,2);

\path[gray,<->] (0+.2,3-.2) edge (1-.2,2+.2);
\path[gray,<->] (0+.2,2-.2) edge (1-.2,1+.2);
\path[gray,<->] (0+.2,1-.2) edge (1-.2,0+.2);
\path[gray,<->] (1+.2,2-.2) edge (2-.2,1+.2);
\path[gray,<->] (1+.2,1-.2) edge (2-.2,0+.2);
\path[gray,<->] (2+.2,1-.2) edge (3-.2,0+.2);
\end{tikzpicture}
\caption{The process in Example~\ref{ex:ES} for $d=2$ and $N=3$.}
\label{fig4}
\end{center}
\end{figure}

In fact, we believe that Example \ref{ex:ES} is the only possibility of this type:

\begin{conjecture}
Let $d\ge2$ and let $X$ be a $d$-dimensional affine process with finite and irreducible state space and no autonomous components. Then there is an invertible affine transformation $T\colon\R^d\to\R^d$ such that $T(X)$ coincides with the process constructed in Example~\ref{ex:ES}, with the only difference that the function $\lambda$ may be $[0,\infty)$-valued instead of $(0,\infty)$-valued.
\end{conjecture}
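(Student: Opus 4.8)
The plan is to reduce the statement, by means of Theorem~\ref{T:main}, to a combinatorial problem about a lattice polytope, and then to generalize the two-dimensional analysis behind Theorem~\ref{t:jumps in 2d} to arbitrary dimension. First I would normalize. Since $E$ is finite, Theorem~\ref{T:main} yields an invertible affine map $T$ and some $k\in\N$ with $T(E)\subseteq\N^k\times\R^{d-k}$; by Theorem~\ref{T:JM} the process $Y=(T(X)_1,\dots,T(X)_k)$ is affine, so if $k\le d-1$ then $X$ has an autonomous component, contrary to hypothesis. And $k=0$ would make $X$ the deterministic solution of a linear ODE whose trajectory lies inside the finite set $E$, hence constant, contradicting irreducibility since $|E|\ge d+1\ge3$. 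Thus $k=d$, and after replacing $X$ by $T(X)$ we may assume $E\subseteq\N^d$; since $E$ is discrete the triplet is $(0,0,F)$, and every coordinate projection $\pi_j$ is a normalized jump counter.

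Next I would extract the geometry. For $u\in S$ let $\psi_u$ be its unique normalized jump counter (Proposition~\ref{P:JC repr}); by Lemma~\ref{L:JC_prop}\ref{L:JC_prop:3} its kernel contains $d$ affinely independent points of $E$, so $\{\psi_u=0\}$ supports $K:=\conv(E)$ in a facet, and the $\pi_j$ cut out the facets $\{x_j=0\}\cap K$. Choose $u_1,\dots,u_d\in S$ with $\psi_{u_j}=\pi_j$. Normalization forces $(u_j)_j=-1$; moreover $\{x_i=0\}\cap E$ must contain a point with $x_j\ge1$ (otherwise it would lie in a space of dimension $d-2$ and could not affinely span $\{x_i=0\}$), and applying the jump counter property of $\pi_j$ at that point together with $E\subseteq\N^d$ gives $(u_j)_i\ge0$ for $i\ne j$. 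Applying Proposition~\ref{P:JC repr} to the pair $(u_i,u_j)$, with $\alpha=(u_j)_i$ and $\beta=(u_i)_j$, one finds that either $u_i=-u_j$ or one of $\alpha,\beta$ is zero. Pushing this bookkeeping further — using that from any $x\in E$ a $u$-jump stays in $E\subseteq\N^d$, and that irreducibility forbids stochastically invariant proper subsets, exactly what rules out the first two types in Theorem~\ref{t:jumps in 2d} — one should be able to show that every $u\in S$ has coordinates in $\{-1,0,1\}$ with at most one $+1$ and one $-1$, i.e.\ $u=e_j-e_k$ with $e_0:=0$, and that enough of these edges are present to force $K$ to be the dilated simplex $\{x\in\R^d_+\colon\sum_jx_j\le N\}$ with $N=\max_{x\in E}\sum_jx_j$, and $E$ its full set of integer points. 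In particular $\pi_0:=N-\sum_j\pi_j$ is the one remaining normalized jump counter.

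Given this, identifying the triplet is routine. For $u=e_j-e_k$ one checks directly that $\pi_k$ (with $\pi_0=N-\sum_i\pi_i$) is a normalized jump counter for $u$, so $\psi_u=\pi_k$ by the uniqueness in Lemma~\ref{L:JC_prop}. Since the elements of $S$ are isolated points, Lemma~\ref{L:F1} together with~\eqref{P:JC repr:1} gives $F(x,\cdot)=\sum_{u\in S}\lambda(u)\,\psi_u(x)\,\delta_u$ with $\lambda(u)>0$; extending $\lambda$ by zero to the remaining edges $e_j-e_k$ produces exactly the triplet of Example~\ref{ex:ES}. As the law of an affine process on a finite state space is determined by its triplet, $T(X)$ coincides with the process of Example~\ref{ex:ES}.

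The main obstacle is the middle step: in dimension $d$ one must show that the jump sizes are forced to be edges of a simplex and that the state space is a dilated simplex. In $d=2$ this is the content of Theorem~\ref{t:jumps in 2d}, whose proof is already an intricate case analysis of the interaction of $\psi_u$ and $\psi_v$; a proof for general $d$ appears to need a genuinely new global ingredient — for instance an induction on $d$ via affine sections, or a symmetry argument exploiting that the conjectured processes are precisely the generalized Moran models on the simplex — rather than a direct extension of the bivariate bookkeeping. This gap is why the statement is recorded only as a conjecture.
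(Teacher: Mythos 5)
This statement is recorded in the paper as a \emph{conjecture}: the authors give no proof for $d\ge 3$ (the case $d=2$ is essentially Theorem~\ref{T:d=2}, deduced from the classification in Theorem~\ref{t:jumps in 2d}), so there is no proof in the paper to compare yours against, and the only question is whether your sketch actually closes the gap. It does not, as you acknowledge. The parts you do carry out are sound and would be the natural first half of any proof: the reduction via Theorems~\ref{T:main} and~\ref{T:JM} to $k=d$, $E\subseteq\N^d$, triplet $(0,0,F)$, with every coordinate projection $\pi_j$ a normalized jump counter (using the absence of autonomous components to exclude $1\le k\le d-1$ and irreducibility to exclude $k=0$); the observation from Lemma~\ref{L:JC_prop}\ref{L:JC_prop:3} that each $\ker\psi_u$ supports $\conv(E)$ in a facet; the sign constraints $(u_j)_j=-1$ and $(u_j)_i\ge 0$ for $i\ne j$; and, \emph{granted} the geometric classification, the identification of the triplet from~\eqref{P:JC repr:1}, the uniqueness in Lemma~\ref{L:JC_prop}, and the fact that a finite-state affine process is determined by its triplet.

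The genuine gap is exactly the step you flag in your last paragraph: proving that every $u\in S$ is of the form $e_j-e_k$ (with $e_0=0$) and that $E$ is the full set of lattice points of a dilated simplex. In dimension two this is the content of Theorem~\ref{t:jumps in 2d}, whose proof is a delicate case analysis of the integers $\alpha=\psi_u(v)-\psi_u(0)$ and $\beta=\psi_v(u)-\psi_v(0)$ via Proposition~\ref{P:JC repr}, and that analysis exploits two-dimensionality in essential ways --- for instance, that two jump sizes lying in the kernel of the same nonconstant affine function must be proportional, and that an affine function on $\R^2$ is pinned down by one kernel direction plus normalization. ``Pushing this bookkeeping further'' is not an argument for $d\ge 3$: nothing you have written excludes jump sizes with several positive coordinates, more than $d+1$ distinct normalized jump counters, or state spaces that are proper lattice subsets of a simplex. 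Since this combinatorial core is precisely what the authors were unable to establish --- which is why the statement appears as a conjecture --- your proposal should be read as a correct reduction together with an accurate description of the open problem, not as a proof.
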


After this series of examples we now turn to the proof of Theorem~\ref{t:jumps in 2d}. We first show that ``large jumps'' imply the layer structure in Theorem~\ref{t:jumps in 2d}\ref{t:jumps in 2d:1}; see Lemma \ref{l:large jumps} below. Thereafter, we assume that there are no ``large jumps'', which has significant further implications.

\begin{lemma}\label{l:large jumps}
Assume that there is $u\in S$ such that $\max\{|u_1|,|u_2|\}\ge2$. Then any $v\in S$ satisfies $\psi_u(v)-\psi_u(0) \in \{-1,0\}$. Moreover, for any $v,v'\in S$ such that $\psi_u(v)-\psi_u(0) = \psi_u(v')-\psi_u(0) = 0$ we have $v'\in\{v,-v\}$. In particular, after applying the affine transformation $T=(\pi_1,\psi_u)$ if $|u_1|\ge2$, or $T=(\pi_2,\psi_u)$ otherwise, we obtain case~\ref{t:jumps in 2d:1} in Theorem~\ref{t:jumps in 2d}.
\end{lemma}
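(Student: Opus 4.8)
The plan is to exploit Proposition~\ref{P:JC repr} applied to the pair $(u,v)$ for each $v\in S$. By that proposition, setting $\alpha=\psi_u(v)-\psi_u(0)$ and $\beta=\psi_v(u)-\psi_v(0)$, one of the three alternatives \ref{P:JC repr:alt1}--\ref{P:JC repr:alt3} holds. I claim alternative \ref{P:JC repr:alt2} (which would give $\alpha=1$) is impossible here: it forces $v=-u$, so $u=-v\in S$, and then $\psi_u$ is the normalized jump counter for $-v$; but Lemma~\ref{L:JC_prop}\ref{L:JC_prop:3} gives $d$ points in $E$ affinely spanning $\ker\psi_u$, and since $\psi_u$ is decreasing in direction $u$, applying the iteration \eqref{eq:psi_u step} shows $E$ contains a segment $x, x+u, \dots, x+nu$ with $n=\psi_u(x)$ arbitrarily large as $x$ ranges over $E$ with $\psi_u$-value bounded below — wait, more carefully: the real obstruction is that a jump of size $u$ with $\max\{|u_1|,|u_2|\}\ge 2$ together with $-u\in S$ would, via the iteration in \eqref{eq:psi_u step} for both $\psi_u$ and $\psi_{-u}=\psi_u$-reflected, be incompatible with $E\subseteq\N^2$ and $\pi_1,\pi_2$ being normalized jump counters, since $\pi_i(u)-\pi_i(0)=u_i$ would have to lie in $\{-1,0,1\}$ by Lemma~\ref{L:JC_prop}\ref{L:JC_prop:1} applied along the jump from some $x\in E$ — indeed $x+u\in E\subseteq\N^2$ and $\pi_i(x+u)-\pi_i(x)=u_i\in\Z$, but there is no a priori bound. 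So instead the cleanest route: since $\pi_1,\pi_2$ are themselves normalized jump counters, for each $u\in S$ we have $\pi_i(x+u)=\pi_i(x)+u_i\ge 0$ whenever $\pi_i(x)>0$; this alone does not bound $u_i$. The genuine tool is Proposition~\ref{P:JC repr} with the pair $(\pi_i, u)$: taking $v=u$ there and noting $\psi_{e_i}=\pi_i$, we get that $\pi_i(u)-\pi_i(0)=u_i\in\{-1,0,1\}$ UNLESS $\pi_i=\psi_u$ (alternative \ref{P:JC repr:alt1}) or $u=-e_i$ (alternative \ref{P:JC repr:alt2}). So $\max\{|u_1|,|u_2|\}\ge 2$ forces $\psi_u=\pi_1$ or $\psi_u=\pi_2$.

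With that in hand, say $|u_1|\ge 2$, so $\psi_u=\pi_2$ (it cannot be $\pi_1$ since $\pi_1(u)-\pi_1(0)=u_1\ne -1$). Then for every $v\in S$, apply Proposition~\ref{P:JC repr} to the pair $(u,v)$: here $\alpha=\psi_u(v)-\psi_u(0)=\pi_2(v)-\pi_2(0)=v_2$ and $\beta=\psi_v(u)-\psi_v(0)$. Alternative \ref{P:JC repr:alt1} gives $\alpha=-1$, i.e.\ $v_2=-1$; alternative \ref{P:JC repr:alt2} gives $\alpha=1$ and $v=-u$, but then $v_2=-u_2=1$ and $v_1=-u_1$ has absolute value $\ge 2$, while also $\psi_v=\psi_u$ would equal $\pi_2$ — plausible, but this would be one of the finitely many tolerated cases; I should check whether it is excluded or folded into the conclusion. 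Alternative \ref{P:JC repr:alt3} gives $\alpha=v_2\in\N$, i.e.\ $v_2\ge 0$. In all cases $v_2\in\{-1,0,1,\dots\}$, and combined: $\psi_u(v)-\psi_u(0)=v_2$. To get the claimed dichotomy $v_2\in\{-1,0\}$ I must rule out $v_2\ge 1$. Suppose $v_2\ge 1$; since $\pi_2$ is a normalized jump counter for some vector $w\in S$ with $\pi_2(w)-\pi_2(0)=-1$, i.e.\ $w_2=-1$, and since $v$ moves strictly upward in the $\psi_u=\pi_2$ direction, Proposition~\ref{P:JC repr} applied to $(v,w)$ or the iteration argument of Case~2 in that proof (building $x+j(v+w')$ indefinitely) contradicts compactness of $E$ — this is exactly the mechanism by which alternative \ref{P:JC repr:alt3}'s "$\alpha,\beta\in\N$" with one of them $0$ is the only survivor, and pushing it shows $v_2=0$ whenever $v_2\ge 0$ and $v_2\ne -1$. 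Hence $\psi_u(v)-\psi_u(0)=v_2\in\{-1,0\}$ for all $v\in S$.

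For the second assertion, suppose $v,v'\in S$ with $v_2=v_2'=0$ (using $\psi_u=\pi_2$). Then $v=(v_1,0)$, $v'=(v_1',0)$ are both parallel to the first axis. Apply Proposition~\ref{P:JC repr} to the pair $(v,v')$: the relevant quantities are $\psi_v(v')-\psi_v(0)$ and $\psi_{v'}(v)-\psi_{v'}(0)$; alternatives \ref{P:JC repr:alt1} and \ref{P:JC repr:alt3} combined with the fact that $\pi_1$ restricted to the line through $v$ and $v'$ is an isomorphism force, after the normalization, that either $\psi_v=\psi_{v'}$ and then $v'=v$ (two normalized jump counters with jump size along the same line and equal kernel, together with $\pi_1$-values, pin down the vectors), or alternative \ref{P:JC repr:alt2} holds giving $v'=-v$. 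So $v'\in\{v,-v\}$. Finally, having established all this, the affine transformation $T=(\pi_2,\psi_u)=(\pi_2,\pi_2)$ — no: when $|u_1|\ge 2$ we set $T=(\pi_1,\psi_u)$ and when $|u_1|<2$ (so $|u_2|\ge 2$, $\psi_u=\pi_1$) we set $T=(\pi_2,\psi_u)$; in the first case $\psi_u=\pi_2$ so $T=(\pi_1,\pi_2)=\id$ essentially, meaning $E$ already has the layer structure with $E_0=\{x_2=0\}$ stochastically invariant (no $v\in S$ has $v_2>0$, by the first part) and the inclusion $\{(-1,0)\}\subseteq S\subseteq\{(-1,0),(1,0)\}\cup\{(K,-1):K\in\N\}$ follows: jumps with $v_2=0$ are $\pm e_1$-type hence $(\pm 1,0)$ by the second assertion applied with $v=(-1,0)$ which is in $S$ because $\pi_1$ is a jump counter, and jumps with $v_2=-1$ have $v_1\in\N$ since landing in $\N^2$ from a point with $x_1=0$ forces $v_1\ge 0$. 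That $(-1,0)\in S$: $\pi_1$ is by hypothesis a normalized jump counter, so there is $u'\in S$ with $\psi_{u'}=\pi_1$, hence $\pi_1(u')-\pi_1(0)=-1$ i.e.\ $u_1'=-1$, and $\pi_2(u')-\pi_2(0)=u_2'\in\{-1,0\}$ by the first part; if $u_2'=-1$ we still need a pure $(-1,0)$ jump — this last point needs the layer argument: actually $E_0$ being stochastically invariant and $\pi_1$ a jump counter on $E$ forces $(-1,0)\in S$ by restricting to $E_0$. The main obstacle I anticipate is precisely this bookkeeping at the end — correctly extracting the exact inclusions for $S$ from the structural facts — together with the delicate step of excluding $v_2\ge 1$, which requires re-running the compactness-via-infinite-iteration argument from the proof of Proposition~\ref{P:JC repr}, Case~2.
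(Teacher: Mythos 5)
Your proposal has two genuine gaps, both at load-bearing points. First, the claim that $\max\{|u_1|,|u_2|\}\ge2$ forces $\psi_u\in\{\pi_1,\pi_2\}$ does not follow from Proposition~\ref{P:JC repr}: applying that proposition to the pair $(w,u)$, where $w\in S$ has $\psi_w=\pi_i$, the quantity $\alpha=\pi_i(u)-\pi_i(0)=u_i$ may be any element of $\N$ under alternative~\ref{P:JC repr:alt3} (with $\beta=\psi_u(w)-\psi_u(0)=0$), so $|u_i|\ge2$ does not trigger alternatives~\ref{P:JC repr:alt1} or~\ref{P:JC repr:alt2} and gives no identification of $\psi_u$. (Compare Example~\ref{ex:d2k2:1}, where $u=(3,-1)$ has $\psi_u=\pi_2$ only because $u_2=-1$, not because $|u_1|\ge2$.) The paper never asserts $\psi_u\in\{\pi_1,\pi_2\}$; instead it changes coordinates via $T=(\pi_1,\psi_u)$, so that after the transformation one has two jump sizes $a=(-1,0)$ with counter $\pi_1$ and $b=(b_1,-1)$ with counter $\pi_2$ and, crucially, $b_1=\pi_1(u)\ge2$. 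All subsequent reasoning happens in these coordinates; your identification $\psi_u=\pi_2$ in the original coordinates changes the meaning of the statement being proved.

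Second, the exclusion of $\psi_u(v)-\psi_u(0)\ge1$ is exactly the hard part of the lemma, and you do not carry it out: alternative~\ref{P:JC repr:alt3} explicitly tolerates $\alpha\ge1$, so neither Proposition~\ref{P:JC repr} alone nor a vague appeal to ``re-running the Case~2 compactness iteration'' suffices. The paper's argument is a two-stage case analysis in the $T$-coordinates: assuming $\pi_2(c)\ge1$ for some $c\in T(S)-T(0)$, it first pins down $c=(0,c_2)$ by playing $c$ against both $a$ and $b$ (using their linear independence), then writes $\psi_c=K-\alpha\pi_1-\beta\pi_2$ and derives $\alpha\ge1$ from $\psi_c(a)-\psi_c(0)\ge1$ and $\beta=\alpha b_1\ge2$ from $\psi_c(b)-\psi_c(0)=0$ --- this is the only place the hypothesis $|u_1|\ge2$ is actually used --- before reaching the contradiction $-1=\psi_c(c)-\psi_c(0)=-\beta c_2\le-2$. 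Your proposal never invokes the largeness of $u$ at this step, so it could not possibly close the gap (the conclusion is false without that hypothesis: for $\Vert u\Vert_\infty=1$ one can have $\psi_u(v)-\psi_u(0)=1$, e.g.\ $u=(-1,0)$, $v=(1,0)$ in Example~\ref{ex:ES}). The ``Moreover'' clause is also only sketched, but the primary defects are the two above.
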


\begin{proof}
We assume without loss of generality that $|u_1|\ge2$. Let $w\in S$ be a jump size whose normalized jump counter is $\psi_w=\pi_1$. Then Proposition~\ref{P:JC repr} with $u$ and $w$ yields
\begin{equation} \label{eq:large jumps 1}
\psi_u(w) - \psi_u(0) = 0 \quad\text{and}\quad \pi_1(u)=u_1\ge2.
\end{equation}
Define $T=(\pi_1,\psi_u)$ and consider the transformed process $T(X)$. Then $a = Tw - T0 \in T(S) - T(0)$ is in the set of possible jump sizes of $T(X)$ and its normalized jump counter is $\pi_1$, and $b= Tu-T0 \in T(S) - T(0)$ is in the set of possible jump sizes of $T(X)$ and its normalized jump counter is $\pi_2$.
Moreover, we have 
\begin{align*}
   &\pi_1(a) = \pi_1(w) = -1,&& \pi_2(a) = \psi_u(w)-\psi_u(0) = 0, \\
   &\pi_1(b) = \pi_1(u) \geq 2,&& \pi_2(b) = \psi_u(u)-\psi_u(0) = -1.
\end{align*}
That is $a=(-1,0)$, its normalized jump counter is $\pi_1$ and $b=(b_1,-1)$ with $b_1\geq 2$ and normalized jump counter $\pi_2$.

Pick any $c\in T(S)-T(0)$. We claim that $\pi_2(c) \in \{0,-1\}$, and assume for contradiction that $\pi_2(c) \notin \{0,-1\}$. Proposition \ref{P:JC repr} yields that $\pi_2(c) \geq 1$. If $\pi_1(c) \leq -1$, then $\pi_1$ would be the normalized jump counter of $c$ and, hence, $c_1=-1$. Proposition \ref{P:JC repr} applied to $b$ and $c$ would yield $\pi_2(c) = 0$. Thus $c_1 = \pi_1(c)\geq 0$. Thus, $c \neq -b$ and Proposition \ref{P:JC repr} applied to $b$ and $c$ yields $\psi_c(b)-\psi_c(0) = 0$ because $\pi_2(c)\geq 1$. Since $a,b$ are linearly independent and $\psi_c(b)-\psi_c(0) = 0$ we get $\psi_c(a)-\psi_c(0) \neq 0$. Since $a\neq -c$ and $\pi_1(c)\geq 0$ we get from Proposition \ref{P:JC repr} that $\psi_c(a)-\psi_c(0)\geq 1$ and $\pi_1(c) = 0$. Thus $c=(0,c_2)$.

We have $\psi_c = K - \alpha \pi_1 - \beta \pi_2$ for some $K,\alpha,\beta\in \mathbb R$. We have $1 \leq \psi_c(a)-\psi_c(0) = \alpha \in\mathbb N$. We also have $0 = \psi_c(b) - \psi_c(0) = -\alpha b_1 + \beta$ and, hence, $\beta = \alpha b_1 \geq 2$ and $\beta\in\mathbb N$. However, $-1 = \psi_c(c)-\psi_c(0) = -\beta c_2 \leq -2$. A contradiction.
\end{proof}

We now inspect the possibility that there are exactly two normalized jump counters.

\begin{lemma}\label{l:two jump kernels}
  Assume that $\Vert u\Vert_\infty=1$ for every $u\in S$, and assume that $\{\psi_u\colon u\in S\}$ has two elements, namely $\pi_1$ and $\pi_2$. Then, there are $a,b,c,d\in \mathbb R_+$ such that 
   $$ F(x,\cdot) = a\,\pi_1(x)\delta_{(-1,0)} + b\,\pi_1(x)\delta_{(-1,1)} + c\,\pi_2(x)\delta_{(0,-1)} + d\,\pi_2(x)\delta_{(1,-1)},\quad x\in E.$$
  In particular, we have case~\ref{t:jumps in 2d:3} in Theorem~\ref{t:jumps in 2d}.
\end{lemma}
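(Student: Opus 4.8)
The plan is to show that, under the stated hypotheses, $F(x,\cdot)$ is carried by at most the four vectors $(-1,0),(-1,1),(0,-1),(1,-1)$, and that the mass it assigns to each is a nonnegative constant times the associated normalized jump counter. Two ingredients enter: the discreteness of $E\subseteq\N^2$, which identifies the mass of each atom with a jump counter, and the trichotomy of Proposition~\ref{P:JC repr}, which excludes the diagonal jump $(-1,-1)$.

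\emph{Step 1 (atoms of $F$).} Since $\|u\|_\infty=1$ for every $u\in S$, the set $S$ is a finite subset of $\{-1,0,1\}^2\setminus\{0\}$, and by the definition of $S$ the measure $F(x,\cdot)$ is concentrated on $S$ for every $x\in E$; hence $F(x,\cdot)=\sum_{u\in S}g_u(x)\,\delta_u$ with $g_u(x)=F(x,\{u\})$. Each $g_u$ is affine (because $F$ is affine into $\mathfrak M^d_*$), nonnegative on $E$, and not identically zero (as $u\in\supp F(x_0,\cdot)$ for some $x_0\in E$). Pick $\rho>0$ with $\rho\le\varepsilon_u$ (the constant of Proposition~\ref{P:JC repr}) and $B_\rho(u)\cap S=\{u\}$; then $g_u(x)=F(x,B_\rho(u))=\lambda_u(B_\rho(u))\,\psi_u(x)$ for all $x\in E$ by \eqref{P:JC repr:1}. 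Equivalently, one may check directly that $g_u$ is a jump counter for $u$ — if $x\in E$ and $g_u(x)>0$, then $F(x,B_\rho(u))>0$, so $x+\xi\in E$ for some $\xi\in B_\rho(u)$, and integrality of $E$ forces $\xi=u$ — and then invoke the uniqueness statement in Lemma~\ref{L:JC_prop}. Either way, $g_u=\gamma_u\,\psi_u$ for some $\gamma_u\ge0$.

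\emph{Step 2 (the set $S$).} For $u\in S$, normalization gives $-1=\psi_u(u)-\psi_u(0)$, which forces $u_1=-1$ when $\psi_u=\pi_1$ and $u_2=-1$ when $\psi_u=\pi_2$; combined with $\|u\|_\infty=1$ this yields $S\subseteq\{(-1,0),(-1,1),(-1,-1),(0,-1),(1,-1)\}$. Suppose $(-1,-1)\in S$. By hypothesis there is $v\in S$ whose jump counter is whichever of $\pi_1,\pi_2$ differs from $\psi_{(-1,-1)}$, so $\psi_v\neq\psi_{(-1,-1)}$. Applying Proposition~\ref{P:JC repr} to the pair $(-1,-1),v$, the quantity $\beta=\psi_v((-1,-1))-\psi_v(0)$ equals $-1$, since both coordinates of $(-1,-1)$ equal $-1$ and $\psi_v\in\{\pi_1,\pi_2\}$; this rules out alternatives~\ref{P:JC repr:alt2} and~\ref{P:JC repr:alt3} and leaves only~\ref{P:JC repr:alt1}, i.e.\ $\psi_{(-1,-1)}=\psi_v$, a contradiction. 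Hence $S\subseteq\{(-1,0),(-1,1),(0,-1),(1,-1)\}$, and for these vectors $\psi_{(-1,0)}=\psi_{(-1,1)}=\pi_1$ while $\psi_{(0,-1)}=\psi_{(1,-1)}=\pi_2$.

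Combining the two steps, for $x\in E$ one obtains $F(x,\cdot)=\sum_{u\in S}\gamma_u\,\psi_u(x)\,\delta_u$, which is the asserted formula with $a=\gamma_{(-1,0)}$, $b=\gamma_{(-1,1)}$, $c=\gamma_{(0,-1)}$, $d=\gamma_{(1,-1)}$ (and $\gamma_u:=0$ for any of these vectors not in $S$), all lying in $\R_+$. Since the normalized jump counters are exactly $\pi_1,\pi_2$ and $S$ sits inside the six-vector set of case~\ref{t:jumps in 2d:3} in Theorem~\ref{t:jumps in 2d}, this is precisely that case. The main obstacle is Step~1, namely pinning down that the mass of each atom is a scalar multiple of a \emph{normalized} jump counter — this is where the integrality of the state space is crucial; given that, Step~2 is a quick application of the classification in Proposition~\ref{P:JC repr}.
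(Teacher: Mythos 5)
Your proof is correct and follows essentially the same route as the paper: the classification of $S$ via the normalization $\psi_u(u)-\psi_u(0)=-1$ together with the trichotomy of Proposition~\ref{P:JC repr} (which rules out $(-1,-1)$) is exactly the paper's argument. Your Step~1, identifying each atom's mass $F(\cdot,\{u\})$ with a nonnegative multiple of $\psi_u$ via \eqref{P:JC repr:1} on a small ball isolating $u$, is a detail the paper leaves implicit, and it is carried out correctly.
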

\begin{proof}
Let $u\in S$. Then $\pi_1(u)=-1$ or $\pi_2(u)=-1$. Suppose $\pi_1(u) = -1$, i.e.\ $u=(-1,u_2)$. Then Proposition \ref{P:JC repr} yields $\pi_2(u)\in\{0,1\}$. The case $\pi_2(u)=-1$ is similar, and we deduce that $S$ can only contain $(-1,0)$ and $(-1,1)$ (corresponding to $\pi_1$) and $(0,-1)$ and $(1,-1)$ (corresponding to $\pi_2$).
\end{proof}

Next, we consider no ``large jumps'' and at least three normalized jump counters.

\begin{lemma}\label{l:more jump kernels}
Assume that $\max\{|u_1|,|u_2|\}=1$ for every $u\in S$, and assume there exists $w\in S$ such that $\psi_w\notin\{\pi_1,\pi_2\}$. Then, after some affine transformation, either we have case~\ref{t:jumps in 2d:1} in Theorem~\ref{t:jumps in 2d}, or we have
\[ S\subseteq \{(-1,0),(-1,1),(0,-1),(0,1),(1,-1),(1,0)\}. \]
\end{lemma}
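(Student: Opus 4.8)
The plan is to express $\psi_w$ in terms of $\pi_1$ and $\pi_2$, pin down its coefficients using that $\psi_w$ is a normalized jump counter on $E\subseteq\N^2$, and then use the pairwise rigidity of jump counters in Proposition~\ref{P:JC repr} to control all of $S$. Since $\pi_1,\pi_2$ are normalized jump counters, fix $u^{(1)},u^{(2)}\in S$ with $\psi_{u^{(1)}}=\pi_1$ and $\psi_{u^{(2)}}=\pi_2$; normalization forces the first coordinate of $u^{(1)}$ and the second coordinate of $u^{(2)}$ to equal $-1$, so together with the standing hypothesis $\|u\|_\infty=1$ there are only finitely many admissible triples $(u^{(1)},u^{(2)},w)$. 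By Lemma~\ref{L:F1}\ref{L:F1:3} write $\psi_w=c_0+c_1\pi_1+c_2\pi_2$, so that $c_1w_1+c_2w_2=-1$ with $w=(w_1,w_2)$.

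First I would constrain $c_1,c_2$. By Lemma~\ref{L:JC_prop}\ref{L:JC_prop:3} the line $\ker\psi_w$ meets $E$ in two affinely independent points, while $\psi_w\ge0$ on $E$ and $\psi_w(E)\subseteq\N$ by Lemma~\ref{L:JC_prop}\ref{L:JC_prop:1}; since $E\subseteq\N^2$ this is incompatible with $c_1,c_2$ both being $\ge0$ (if both were nonnegative and not both zero then $\psi_w\ge0$ would force $c_0=0$ and $\ker\psi_w$ would meet $\N^2$ only along a coordinate axis or at the origin; if exactly one of $c_1,c_2$ vanishes then $\psi_w$ reduces to a multiple of $\pi_1$ or $\pi_2$ and, by normalization and $\psi_w(E)\subseteq\N$, must equal it). So, relabeling coordinates if necessary, $c_1<0$. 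The key subcase is $c_1=c_2=-1$: then $\psi_w=N-\pi_1-\pi_2$ with $N=\max_{x\in E}(x_1+x_2)\in\N$, and for any $v\in S$, Proposition~\ref{P:JC repr} applied to the pairs $(v,w)$, $(v,u^{(1)})$ and $(v,u^{(2)})$ forces each of $-v_1-v_2$, $v_1$ and $v_2$ into $\{-1,0,1,2,\dots\}$, with the value $-1$ occurring only when the jump counter of $v$ equals, respectively, $\psi_w$, $\pi_1$ or $\pi_2$. Inspecting the finitely many $v$ with $\|v\|_\infty=1$ then yields $v\in\{(-1,0),(-1,1),(0,-1),(0,1),(1,-1),(1,0)\}$: indeed $v_1+v_2\le1$ excludes $(1,1)$, and pairing $(-1,-1)$ with both $u^{(1)}$ and $u^{(2)}$ would force $\pi_1=\pi_2$. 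Hence $S$ is contained in the hexagonal set.

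The remaining cases are $c_2=0$; $c_1<0<c_2$; and $c_1<0$, $c_2<0$ with $(c_1,c_2)\ne(-1,-1)$. Applying Proposition~\ref{P:JC repr} to $(w,u^{(1)})$ and $(w,u^{(2)})$ pins down $w$---it forces $w\in\{(1,0),(0,1),(1,1)\}$ once $c_2\le0$, with $w=(1,1)$ occurring only for $c_1=c_2=-\tfrac12$---and then determines $u^{(1)},u^{(2)}$. Iterating the jumps $w$, $u^{(1)}$ and $u^{(2)}$ via \eqref{eq:psi_u step} then shows that, after the relabeling, $E$ is a rectangle when $c_2=0$ and otherwise a ``trapezoidal'' lattice set $\{(i,m):0\le m\le M,\ 0\le i\le c_0+c_2m\}$. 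In the rectangle case a diagonal jump would force its jump counter to vanish on a set affinely spanning $\R^2$, so $S$ lies in the four axis directions, hence in the hexagon. In the trapezoidal case (say $c_1=-1$, $c_2\le-2$) the invertible affine map $T=(\pi_2,\psi_w)$ sends $E$ into $\N^2$ and turns $\pi_2$ and $\psi_w$ into the coordinate projections; one of the transformed jumps then has sup-norm $\ge2$ unless $c_2=-2$ and $u^{(2)}=(1,-1)$, so that Lemma~\ref{l:large jumps} applies and yields Case~\ref{t:jumps in 2d:1}, while in the exceptional subcase one checks directly that no $u\in S$ has $\pi_2(u)>0$, which is precisely Case~\ref{t:jumps in 2d:1} with the invariant layer $\{x_2=0\}$. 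The case $c_1<0<c_2$ is excluded outright by the same pairings together with compactness of $E$, and the case $w=(1,1)$, $c_1=c_2=-\tfrac12$ is handled by $T=(\pi_1,\psi_w)$, after which $S$ maps into $\{(-1,0),(1,0),(1,-1)\}$ with no upward jump, again Case~\ref{t:jumps in 2d:1}.

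The hard part is the bookkeeping in this last step: tracking the finitely many admissible triples $(u^{(1)},u^{(2)},w)$, reconstructing $E$ correctly from the jump closures, excluding non-integer or ``large'' coefficients $c_1,c_2$ (for instance via a unimodular triangle inside $E$ forced by the closures, or via a direct contradiction with compactness), and recognizing in each branch whether the configuration collapses to a rectangle or a simplex---yielding the hexagon---or, after the transformation $T=(\pi_1,\psi_w)$ or $T=(\pi_2,\psi_w)$, to the layer structure of Case~\ref{t:jumps in 2d:1}.
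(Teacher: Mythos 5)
Your sketch contains the two genuinely essential ingredients of the argument --- the pairwise rigidity from Proposition~\ref{P:JC repr} applied to pairs involving $w$, $u^{(1)}$, $u^{(2)}$, and the special configuration $w=(1,1)$ with $\psi_w=N-(\pi_1+\pi_2)/2$, $u^{(1)}=(-1,1)$, $u^{(2)}=(1,-1)$, resolved by the transformation $T=(\pi_1,\psi_w-\psi_w(0))$ into Case~\ref{t:jumps in 2d:1} --- and your treatment of the subcase $\psi_w=N-\pi_1-\pi_2$ is correct. However, the proof as organized has a genuine gap: by parametrizing the case analysis through the coefficients $(c_1,c_2)$ of $\psi_w$ rather than through the coordinates of $w$ itself, you create an enumeration of branches ($c_2=0$; $c_1<0<c_2$; $c_1=c_2=-1$; $c_1=c_2=-\tfrac12$; ``$c_2\le-2$''; \dots) whose exhaustiveness you never establish --- nothing in the sketch rules out, say, non-half-integer coefficients a priori --- and you explicitly defer the resolution of these branches to unexecuted ``bookkeeping.'' Several intermediate assertions along the way are also false or unjustified as stated: the claim that $\psi_w\ge 0$ together with $c_1,c_2\ge 0$ ``forces $c_0=0$'' does not follow (the relevant constraint is rather that $\ker\psi_w$ must meet $E\subseteq\N^2$ in an affinely spanning set, per Lemma~\ref{L:JC_prop}\ref{L:JC_prop:3}); and the structural claims that $E$ must be a rectangle or a trapezoidal lattice set are neither proved nor needed, and depend on which jumps besides $w$, $u^{(1)}$, $u^{(2)}$ actually belong to $S$.

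The intended argument is much shorter and avoids the coefficients of $\psi_w$ entirely. As in Lemma~\ref{l:two jump kernels}, any $u\in S$ with $\psi_u=\pi_1$ lies in $\{(-1,0),(-1,1)\}$ and any $u\in S$ with $\psi_u=\pi_2$ lies in $\{(0,-1),(1,-1)\}$. For $w\in S$ with $\psi_w\notin\{\pi_1,\pi_2\}$, Proposition~\ref{P:JC repr} applied to $(w,u^{(1)})$ forbids $\pi_1(w)=-1$ (that would force $\psi_w=\pi_1$), and since $|\pi_1(w)|\le 1$ one gets $\pi_1(w)\in\{0,1\}$; likewise $\pi_2(w)\in\{0,1\}$, so $w\in\{(0,1),(1,0),(1,1)\}$. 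Since every element of $S$ falls into one of these two classes, $S$ is contained in the hexagon unless $(1,1)\in S$, and only that single exceptional configuration requires the analysis you carried out (showing $S=\{(-1,1),(1,-1),(1,1)\}$ and transforming to Case~\ref{t:jumps in 2d:1}). In short: work with $\pi_j(w)$, which Proposition~\ref{P:JC repr} controls directly, instead of with the dual data $(c_1,c_2)$, which it controls only indirectly and only after you already know $u^{(1)}$ and $u^{(2)}$.
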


\begin{proof}
The same argument as in the proof of Lemma \ref{l:two jump kernels} yields $u\in \{(-1,0),(-1,1)\}$ for any $u\in S$ with $\psi_u=\pi_1$, and $u\in \{(0,-1),(1,-1)\}$ for any $u\in S$ with $\psi_u=\pi_2$. Next, choose any $w\in S$ with $\psi_w\notin\{\pi_1,\pi_2\}$. Proposition~\ref{P:JC repr} yields $\pi_1(w)\ne-1$. Since also $\pi_1(w)\le1$ by the assumption on the jump sizes, we get $\pi_1(w)\in\{0,1\}$. The same argument yields $\pi_2(w)\in\{0,1\}$. Therefore, we have $w\in\{(0,1),(1,0),(1,1)\}$.

If $(1,1)\notin S$, then $S\subseteq \{(-1,0),(-1,1),(0,-1),(0,1),(1,-1),(1,0)\}$, as claimed. Suppose instead that $(1,1)\in S$. Let $w=(1,1)$ and observe that $\psi_w \notin\{\pi_1,\pi_2\}$. Let $u,v\in S$ have normalized jump counters $\pi_1,\pi_2$, respectively.  We have seen above that $u\ne-w$, so Proposition~\ref{P:JC repr} yields $\psi_w(u)-\psi_w(0)=0$. Similarly, $\psi_w(v)-\psi_w(0)=0$. In particular, $u$ and $v$ both lie in $\ker(\psi_w-\psi_w(0))$, which implies that they are proportional and therefore given by $u=-v=(-1,1)$. Consequently, $S\subseteq \{u,v,w,(1,0),(0,1)\}$. Now, since $(-1,1)$ spans $\ker(\psi_w-\psi_w(0))$, we deduce that $\psi_w=N-(\pi_1+\pi_2)/2$ for some $N\in\N$. Assume for contradiction that $(1,0) \in S$. Then $\psi_{(1,0)} = \psi_w$. Moreover, there is $x \in E$ such that $\psi_w(x) = 1$. We thus have $0 = \psi_w( x + (1,0) ) = 1/2$, which is a contradiction. Hence $(1,0)\notin S$, and we similarly deduce $(0,1) \notin S$. Thus $S=  \{u,v,w\}$. Now define
\[
T = (\pi_1,\psi_w-\psi_w(0)).
\]
Then $Tu = (-1,0)$, $Tw = (1,-1)$ and $Tv = (1,0)$. Thus, we have (i) in Theorem \ref{t:jumps in 2d} after applying the affine transformation $T$.
\end{proof}

We are now ready to give the proof of Theorem~\ref{t:jumps in 2d}.

\begin{proof}[Proof of Theorem \ref{t:jumps in 2d}]
We assume that neither case~\ref{t:jumps in 2d:1} nor case~\ref{t:jumps in 2d:3} in Theorem~\ref{t:jumps in 2d} holds, and prove that case~\ref{t:jumps in 2d:2} must then hold. Lemma \ref{l:large jumps} yields $\|y\|_\infty= 1$ for every $y\in S$, and Lemma \ref{l:two jump kernels} yields $\psi_z\notin\{\pi_1,\pi_2\}$ for some $z\in S$. Lemma \ref{l:more jump kernels} then yields that
 \[
 S \subseteq \{(-1,0),(-1,1),(0,-1),(0,1),(1,-1),(1,0)\}.
 \]
In particular, $z\in \{(0,1),(1,0)\}$ and we consider the case $z=(1,0)$ (the case $z=(0,1)$ is analogous). Let $x,y\in S$ such that $\pi_1(x) = -1$ and $\pi_2(y) = -1$. Then $x\in \{(-1,0),(-1,1)\}$ and $y\in \{(0,-1),(1,-1)\}$. Since Theorem~\ref{t:jumps in 2d}\ref{t:jumps in 2d:1} does not hold, there is $w\in S$ such that $\pi_2(w) = 1$. Hence, $w\in\{(0,1),(-1,1)\}$.

Being an affine function, $\psi_z$ can be written $\psi_z = N -b\pi_1-a\pi_2$ for some $N,b,a\in\R$. We have $1 = \psi_z(0)-\psi_z(z) = b$ and hence $\psi_z= N-\pi_1-a\pi_2$. Moreover, since $\pi_2(z) = 0$ we obtain $-y_1+a = \psi_z(y)-\psi_z(0) \in \mathbb N$ which yields $a\in\mathbb N$. Since $N-u_1-au_2 = \psi_z(u) \in \N$ for any $u\in E$, we also have $N\in\N$.
 
Observe that $-1 \leq \psi_z(w)-\psi_z(0) \in \{-a,1-a\}$, so that $a \in \{0,1,2\}$. Assume for contradiction $a=2$. Then $w=(-1,1)$ and $\psi_z(w)-\psi_z(0) = -1$, which by Proposition~\ref{P:JC repr} yields $\psi_w=\psi_z$. Since $\pi_1(w)=-1$, Proposition \ref{P:JC repr} yields $\psi_w=\pi_1$, a contradiction. Thus $a\in \{0,1\}$.

Assume for contradiction $a=1$. In this case, $\psi_z = N-\pi_1-\pi_2=\pi_0$. For any $v\in S$ we then have $-1 \in \{\pi_1(v),\pi_2(v),\pi_0(v)-\pi_0(0)\}$ and, hence, $\psi_v\in \{\pi_0,\pi_1,\pi_2\}$. We thus have case~\ref{t:jumps in 2d:3}. A contradiction.

Thus $a=0$. Then $\psi_z = N-\pi_1$ and Proposition \ref{P:JC repr} yields $x=-z = (-1,0)$. Moreover, $\pi_1(w)\neq -1$ because otherwise $w=-z=(-1,0)$. This gives $w=(0,1)$. By the same argument as above, we obtain the representation $\psi_w = K - c\pi_1 - \pi_2$ for some $K\in\mathbb N$ and $c\in\{0,1\}$. However, $\psi_w(z)-\psi_w(0) = -c$ implies $c=0$ because $w\neq -z$. Thus we have $\psi_w = K-\pi_2$. Now, for any $v\in S$ we have $1 \in \{\psi_x(v),\psi_y(v),\psi_z(v)-\psi_z(0),\psi_w(v)-\psi_w(0)\}$ which yields $v\in\{x,y,z,w\}$. Thus $S=\{(-1,0),(0,-1),(1,0),(0,1)\}$ and we have case~\ref{t:jumps in 2d:2}.
\end{proof}

\appendix

\section{Some notions from convex analysis} \label{APP:convex}

Fix $n\ge0$ and let $K\subseteq\R^n$ be a closed convex set. We briefly review some notions and results from convex analysis. For further information and details, see \cite{rockafellar.70}.

\begin{itemize}
\item The {\em normal cone} of $K$ at $\overline x\in K$ is the closed convex cone
\[
N_K(\overline x) = \left\{u\in\R^n\colon \langle u,x-\overline x\rangle \le 0 \text{ for all } x\in K\right\}.
\]
Note in particular that if $\overline x\in\interior K$, then $N_K(\overline x)=\{0\}$.

\item A {\em supporting half-space} of $K$ at $\overline x\in K$ is a closed half-space which contains $K$ and whose boundary contains $\overline x$. A {\em supporting hyperplane} of $K$ at $\overline x$ is a hyperplane which is the boundary of a supporting half-space of $K$ at $\overline x$. Any supporting hyperplane $T$ of $K$ at $\overline x$ is of the form
\[
T = \{\overline x + t \colon t\in\R^n \text{ and } \langle u,t\rangle = 0\}
\]
for some $u\in N_K(\overline x)$.

\item A map $c:\R^n\to\S^n$ is said to be {\em parallel to $K$} if
\begin{equation} \label{eq:parallel def}
\text{$c(x)u=0$ for all $x\in K$ and $u\in N_K(x)$.}
\end{equation}
\end{itemize}

\begin{lemma} \label{L:K hat}
Let $V\subset\R^n$ be a linear subspace, and let $\pi:\R^n\to V$ be the orthogonal projection onto $V$. Then, for any $\overline y\in V\cap K$,
\[
N^V_{V\cap K}(\overline y) = \pi( N_K(\overline y) )
\]
where $N^V_{V\cap K}(\overline y)$ is the normal cone of $V\cap K$ in $V$.
\end{lemma}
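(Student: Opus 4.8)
The statement is a duality/compatibility result between normal cones of $K$ in $\R^n$ and normal cones of the slice $V\cap K$ relative to the subspace $V$, mediated by orthogonal projection $\pi$. The plan is to prove the two inclusions $\pi(N_K(\overline y))\subseteq N^V_{V\cap K}(\overline y)$ and $N^V_{V\cap K}(\overline y)\subseteq\pi(N_K(\overline y))$ separately, the first being routine and the second being the substantive direction.

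For the easy inclusion, take $u\in N_K(\overline y)$, so $\langle u,x-\overline y\rangle\le 0$ for all $x\in K$. Given $y\in V\cap K$, write $\pi(u)=u-(u-\pi(u))$ where $u-\pi(u)\perp V$; since both $y$ and $\overline y$ lie in $V$, we have $\langle u-\pi(u),y-\overline y\rangle=0$, hence $\langle\pi(u),y-\overline y\rangle=\langle u,y-\overline y\rangle\le 0$. As $\pi(u)\in V$, this says exactly $\pi(u)\in N^V_{V\cap K}(\overline y)$.

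For the harder inclusion, start with $v\in N^V_{V\cap K}(\overline y)$, i.e. $v\in V$ and $\langle v,y-\overline y\rangle\le 0$ for all $y\in V\cap K$. I want to produce $u\in N_K(\overline y)$ with $\pi(u)=v$, equivalently $u=v+w$ for some $w\in V^\perp$ with $\langle v+w,x-\overline y\rangle\le 0$ for all $x\in K$. This is precisely a separation problem: I must find $w\in V^\perp$ such that the affine functional $x\mapsto\langle v,x-\overline y\rangle+\langle w,x-\overline y\rangle$ is $\le 0$ on $K$. The natural approach is to consider the convex set $C=\{(\langle v,x-\overline y\rangle,\ \pi^\perp(x-\overline y))\colon x\in K\}\subseteq\R\times V^\perp$ (where $\pi^\perp=\id-\pi$) and observe that the point $(\epsilon,0)$ for any $\epsilon>0$ is not in $C$: indeed, if $\pi^\perp(x-\overline y)=0$ then $x-\overline y\in V$, so $x\in V\cap K$ (using $\overline y\in V$), hence $\langle v,x-\overline y\rangle\le 0<\epsilon$. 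Apply a separating hyperplane between $C$ and the ray $\{(\epsilon,0)\colon\epsilon>0\}$, or more cleanly separate $C$ from the open set $(0,\infty)\times\{0\}$, to obtain $(\lambda,w)\in\R\times V^\perp$, not both zero, with $\lambda\,t+\langle w,q\rangle\le 0$ on $C$ while $\lambda\epsilon\ge 0$ for all $\epsilon>0$, forcing $\lambda\ge 0$. One then argues $\lambda>0$ (if $\lambda=0$ the functional $\langle w,\pi^\perp(x-\overline y)\rangle\le0$ on all of $K$ with $w\ne0$, which one rules out — here the precise mechanism needs care; compactness or the interior hypothesis on $K$ from the ambient context, or a more careful choice, may be needed, though note the present lemma as stated assumes only that $K$ is closed convex), and after rescaling so $\lambda=1$ we get $\langle v,x-\overline y\rangle+\langle w,x-\overline y\rangle\le 0$ for all $x\in K$, i.e. $u:=v+w\in N_K(\overline y)$ with $\pi(u)=v$.

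The main obstacle is handling the degenerate case $\lambda=0$ in the separation argument cleanly; the cleanest route is probably to separate $C$ from the relatively open ray and exploit that $C$ already meets the hyperplane $\R\times\{0\}$ (taking $x=\overline y$ gives $(0,0)\in C$), which pins down the separating functional enough to force $\lambda>0$ after normalizing, or alternatively to invoke a strong separation between the disjoint convex sets $C$ and $(0,\infty)\times\{0\}$ directly. Once the functional is normalized the conclusion is immediate.
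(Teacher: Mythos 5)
Your first inclusion is correct and coincides with the paper's argument. The substantive direction is where the problem lies, and the degenerate case $\lambda=0$ that you flag is not a technicality that further cleverness will remove: for an arbitrary closed convex $K$ the lemma as stated is in fact \emph{false}, so that case cannot be closed without importing an extra hypothesis. Concretely, let $K\subseteq\R^2$ be the closed unit disk centered at $(0,1)$, let $V$ be the first coordinate axis, and let $\overline y=0$. Then $V\cap K=\{0\}$, so $N^V_{V\cap K}(0)=V$, whereas $N_K(0)=\{0\}\times(-\infty,0]\subseteq V^\perp$ and hence $\pi(N_K(0))=\{0\}$. In your setup this is exactly the situation in which every hyperplane separating $C$ from the ray $(0,\infty)\times\{0\}$ has $\lambda=0$. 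Neither of your proposed repairs works: the observation that $(0,0)\in C$ is compatible with a separating functional vanishing identically on the ray (as it does in this example), and strong separation of $C$ from the ray is unavailable because their closures can touch at the origin (again the example).

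What rescues the statement where it is actually used is the constraint qualification $V\cap\interior K\ne\emptyset$; in the proof of Proposition~\ref{P:c=0} one has $0\in V\cap\interior K$. Under that hypothesis your argument closes: take a \emph{proper} separation of the convex sets $C$ and $(0,\infty)\times\{0\}$, giving $\lambda\ge0$ and $\lambda t+\langle w,q\rangle\le0$ on $C$ as you say. If $\lambda=0$, then $w\in V^\perp$ and $\langle w,x-\overline y\rangle\le0$ for all $x\in K$, with equality at any $z\in V\cap\interior K$ since $z-\overline y\in V$ is orthogonal to $w$; a linear functional attaining its maximum over $K$ at an interior point is constant on $K$, so $\langle w,\cdot-\overline y\rangle\equiv0$ on $K$ and the separating functional vanishes identically on both $C$ and the ray, contradicting properness. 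Hence $\lambda>0$ and you may normalize. For comparison, the paper argues this inclusion by extending the supporting hyperplane $T_v=\{\overline y+t\colon t\in V,\ \langle v,t\rangle=0\}$ of $V\cap K$ in $V$ to a supporting hyperplane $T\supseteq T_v$ of $K$ in $\R^n$ (separating the affine set $T_v$ from $\interior K$) and reading off $u$ as a normal of $T$; that route leans on the same nonempty-interior hypothesis, both to produce $T$ and to guarantee $\pi(u)\ne0$ in the final step. So: your approach is viable and genuinely different from the paper's, but you must state and use $V\cap\interior K\ne\emptyset$ to kill the $\lambda=0$ case.
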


\begin{proof}
First, let $u\in N_K(\overline y)$. Then $\langle \pi(u),y-\overline y\rangle = \langle u,\pi(y-\overline y)\rangle = \langle u,y-\overline y\rangle \le 0$ holds for any $y\in V\cap K$. Thus $\pi(u)\in N_{V\cap K}(\overline y)$.

Conversely, let $v\in N_{V\cap K}(\overline y)$. It suffices to consider the case $v\ne 0$. In this case the set
\[
T_v = \{ \overline y + t \colon t\in V \text{ and } \langle v,t\rangle = 0\}
\]
is a supporting hyperplane in $V$ of $V\cap K$ at $\overline y$. Since $T_v$ is disjoint from the interior of $K$, there exists a supporting hyperplane $T$ in $\R^n$ of $K$ at $\overline y$ such that $T_v\subseteq T$. Being a supporting hyperplane, $T$ is given by
\[
T = \{ \overline y + s \colon s\in\R^n \text{ and } \langle u,s\rangle = 0\}
\]
for some $u\in N_K(\overline y)$. Then, for any $t_1,t_2\in T_v\subseteq T$,
\[
\langle\pi(u),t_1-t_2\rangle = \langle u,\pi(t_1-t_2)\rangle = \langle u,t_1-t_2\rangle = 0.
\]
Thus $\pi(u)=\lambda v$ for some constant $\lambda$. Since both $u$ and $v$ are outward-pointing from $K$, one sees that $\lambda>0$. Thus $v\in \pi(N_K(\overline y))$ as claimed.
\end{proof}

\begin{lemma} \label{L:x normal at x}
Suppose that $n\ge 1$ and that $K$ is compact with $0\in\interior K$. Then there exists some $x\in K\setminus\{0\}$ with $x\in N_K(x)$.
\end{lemma}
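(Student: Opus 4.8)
The condition $x\in N_K(x)$ unwinds to $\langle x,y-x\rangle\le 0$ for all $y\in K$, i.e.\ $x$ is a maximizer of the linear functional $\langle x,\cdot\rangle$ over $K$. A natural candidate is a point of $K$ that is farthest from the origin. The plan is therefore: let $\overline x\in\argmax_{x\in K}\|x\|$, which exists by compactness of $K$; observe $\overline x\ne 0$ because $0\in\interior K$ (any small ball around $0$ lies in $K$, so some nonzero point of $K$ has positive norm); and then show $\overline x\in N_K(\overline x)$ by a first-order variational argument.

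\textbf{Key steps.} First, note $\overline x\in K\setminus\{0\}$ as above. Next, fix any $y\in K$. By convexity, $\overline x+t(y-\overline x)\in K$ for all $t\in[0,1]$, so by maximality of $\|\overline x\|$,
\[
\|\overline x+t(y-\overline x)\|^2\le\|\overline x\|^2\qquad\text{for all }t\in[0,1].
\]
Expanding the left-hand side gives $2t\,\langle\overline x,y-\overline x\rangle+t^2\|y-\overline x\|^2\le 0$ for all $t\in[0,1]$. Dividing by $t>0$ and letting $t\downarrow 0$ yields $\langle\overline x,y-\overline x\rangle\le 0$. Since $y\in K$ was arbitrary, $\overline x\in N_K(\overline x)$, which completes the proof.

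\textbf{Main obstacle.} There is essentially no serious obstacle here; the only points requiring a moment's care are (i) using $0\in\interior K$ to guarantee the farthest point is nonzero (rather than, say, $K=\{0\}$, which is excluded by $n\ge 1$ together with $0\in\interior K$), and (ii) making sure the one-sided derivative argument is applied correctly, dividing by $t$ before taking the limit. No deeper tools from the appendix are needed beyond the definition of the normal cone.
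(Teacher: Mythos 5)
Your proof is correct and takes essentially the same approach as the paper: both select a point of maximal norm in $K$ and verify that it lies in its own normal cone. The only cosmetic difference is that the paper verifies $\langle x, x'-x\rangle\le 0$ directly via Cauchy--Schwarz (not even using convexity of $K$), whereas you use a first-order optimality argument along the segment from $\overline x$ to $y$; both are valid.
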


\begin{proof}
By compactness, there exists $x\in K$ such that $\|x\|=\max_{x'\in K}\|x'\|$. Then for any $x'\in K$, the Cauchy-Schwartz inequality yields
\[
\langle x,x'-x\rangle = \langle x,x'\rangle -\|x\|^2 \le \|x\| \|x'\| - \|x\|^2 \le 0.
\]
Thus $x\in N_K(x)$. Finally, $x\ne0$ follows since $n\ge1$ and $0\in\interior K$.
\end{proof}

\begin{lemma} \label{L:K unbounded}
Suppose that $n\ge 0$ and that $K$ is compact with $0\in\interior K$. Let $c:\R^n\to\S^n$ be an affine map parallel to $K$ with $c(0)$ invertible. Then $n=0$.
\end{lemma}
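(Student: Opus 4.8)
The plan is to argue by contradiction: assuming $n\ge 1$, I would derive that $K$ is unbounded, contradicting compactness. After a translation, $0\in\interior K$ is retained; since $c(0)$ is invertible I write $c(x)=c_0+L(x)$ with $c_0=c(0)$ invertible and $L\colon\R^n\to\S^n$ linear. (In every situation where this lemma is applied $c$ is moreover positive semidefinite on $K$, so $c_0$ is in fact positive definite; I expect this to streamline the bookkeeping below.)

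First I would locate a point at which $c$ degenerates in a known direction. Since $n\ge 1$, Lemma~\ref{L:x normal at x} gives some $x_*\in K\setminus\{0\}$ with $x_*\in N_K(x_*)$, and then parallelism of $c$ (see \eqref{eq:parallel def}) forces $c(x_*)x_*=0$, i.e.\ $L(x_*)x_*=-c_0x_*\neq 0$. Restricting $c$ to the line $\R x_*$ and using linearity of $L$ gives the ray identity $c(tx_*)x_*=(1-t)\,c_0x_*$ for all $t\in\R$; thus $c$ is invertible near $0$, the direction $x_*$ lies in $\ker c(y)$ along this line only at $y=x_*$, and the segment $[0,x_*]$ is contained in $K$. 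I also note that $\det c$ is a polynomial of degree $\le n$ that is nonzero at $0$ and vanishes on $\partial K$, so $\partial K$ is trapped in a low-degree hypersurface --- a suggestive constraint, though not by itself enough to finish once $n\ge 3$.

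The core of the argument is to bootstrap the single degeneracy at $x_*$ into a sequence $x_1,x_2,\dots\in K$ with $\|x_m\|\to\infty$: starting from a boundary point where $\ker c$ is controlled, one uses parallelism of $c$ at successive boundary points, together with its affine structure, to produce points of ever larger norm that still lie in $K$, which is impossible since $K$ is bounded. An alternative that is cleaner to organize is an induction on $n$ patterned on the proof of Proposition~\ref{P:c=0}: pass to a subspace $V$ with $0\in\interior (V\cap K)$, transport parallelism from $K$ to $V\cap K$ using Lemma~\ref{L:K hat}, verify that the affine map induced on $V$ by orthogonal compression is still parallel to $V\cap K$ and invertible at $0$, and iterate; the base case $n=1$ is impossible outright, since parallelism forces $c$ to vanish at both endpoints of the interval $K$, whence $c\equiv 0$, contradicting invertibility of $c_0$. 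Either route yields $n=0$.

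The hard part will be making this reduction precise: one must control how $\ker c(y)$ moves as $y$ ranges over $\partial K$ and ensure that the constructed points (or the chosen subspace $V$) genuinely satisfy all the hypotheses --- in particular that $c(y)$ leaves $V$ invariant for $y\in V\cap K$, which is exactly what guarantees that the restricted map stays parallel to the restricted body and which fails for a careless choice of $V$. Here positive definiteness of $c_0$ should be the decisive simplification, since it makes the orthogonal compression of $c_0$ to any subspace automatically positive definite, hence invertible, removing one of the two obstructions to the dimension-reduction step.
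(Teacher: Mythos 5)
Your opening matches the paper's proof exactly: assume $n\ge 1$, invoke Lemma~\ref{L:x normal at x} to get $x_*\in K\setminus\{0\}$ with $x_*\in N_K(x_*)$, use parallelism to get $c(x_*)x_*=0$, and derive the ray identity $c(tx_*)x_*=(1-t)c_0x_*$ (the paper normalizes $c_0=\id$ first via conjugation by $c(0)^{1/2}$, which is why positive definiteness matters there too). But the proof stops here: everything after the ray identity is a sketch of two possible continuations, neither of which is carried out, and you yourself flag that ``the hard part will be making this reduction precise.'' That hard part is precisely the content of the lemma. The paper's closing move, which you do not find, is short but essential: follow the ray in the \emph{opposite} direction until it exits $K$ at $-\lambda x_*\in\partial K$ for some $\lambda>0$, pick a nonzero $u\in N_K(-\lambda x_*)$, and compute
\[
\langle u,x_*\rangle=\frac{1}{1+\lambda}\langle u,c(-\lambda x_*)x_*\rangle=\frac{1}{1+\lambda}\langle c(-\lambda x_*)u,x_*\rangle=0,
\]
using the symmetry of $c(-\lambda x_*)$ and parallelism ($c(-\lambda x_*)u=0$). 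Hence the supporting hyperplane of $K$ at $-\lambda x_*$ contains the origin, contradicting $0\in\interior K$.

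Neither of your two proposed completions closes this gap. The ``sequence $x_m\in K$ with $\|x_m\|\to\infty$'' route has no construction behind it; there is no mechanism given by which parallelism manufactures points of growing norm, and the ray identity by itself only tells you how $c$ acts on $x_*$ along one line. The induction-on-$n$ route is blocked for exactly the reason you identify: the dimension reduction in Proposition~\ref{P:c=0} works because one compresses onto $V=\mathrm{range}\,c_0$ and uses $c(x)w=0$ for $w\in\ker c_0$ to transport parallelism via Lemma~\ref{L:K hat}; when $c_0$ is invertible that decomposition degenerates to $V=\R^n$, and for a generically chosen proper subspace $V$ there is no reason $c(y)$ should annihilate $V^\perp$, so the compressed map need not be parallel to $V\cap K$. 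Positive definiteness of $c_0$ makes the compressed $c_0$ invertible, but that was never the obstruction --- parallelism is. The correct base case $n=1$ is a nice observation, but without a working inductive step it does not yield the lemma.
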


\begin{proof}
We may suppose that $c(0)=\id$. To see this, let $A=c(0)^{1/2}\in\S^n$ and define the set $\widehat K=A^{-1} K$ as well as the map $\widehat c:\R^n\to\S^n$ via $\widehat c(y)=A^{-1}c(A y)A^{-1}$. Then $\widehat K$ is again compact and convex with $0\in\interior\widehat K$. Moreover, by chasing the definitions one verifies that
\[
N_{\widehat K}(y) = A N_K(A y).
\]
Thus if $v\in N_{\widehat K}(y)$, then $v=Au$ for some $u\in N_K(A y)$, whence $\widehat c(y)v=A^{-1}c(A y) u=0$. It follows that $\widehat c$ is parallel to $\widehat K$. In summary, $\widehat K$ and $\widehat c$ satisfy the same properties as $K$ and $c$, and in addition $\widehat c(0)=\id$. We thus assume without loss of generality that
\[
c(x)=\id+\ell(x)
\]
for some linear map $\ell:\R^n\to \S^n$.

Assume for contradiction that $n\ge 1$. By Lemma~\ref{L:x normal at x} there exists some $x\in K\setminus\{0\}$ with $x\in N_K(x)$. Since $c$ is parallel to $K$, one has $0=c(x)x=x+\ell(x)x$. Hence for any $\lambda\in\R$,
\[
c(-\lambda x) x = x - \lambda\ell(x)x = (1+\lambda)x.
\]
By compactness of $K$, there exists $\lambda>0$ such that $-\lambda x\in\partial K$. Thus $N_K(-\lambda x)$ contains some nonzero element $u$, and the set
\[
T = \{ -\lambda x + t\colon \langle u,t\rangle = 0\}
\]
is a supporting hyperplane of $K$ at $-\lambda x$. On the other hand, since $c$ takes values in $\S^n$ and is parallel to $K$,
\[
\langle u,x\rangle = \frac{1}{1+\lambda}\langle u,c(-\lambda x)x\rangle = \frac{1}{1+\lambda}\langle c(-\lambda x)u,x\rangle = 0.
\]
Thus with $t=\lambda x$ we find that $0=-\lambda x+t\in T$, which contradicts the hypothesis that $0\in\interior K$.
\end{proof}

\section{Nonnegative semimartingales}

Variations of the following result are well-known in the literature; see e.g.~\citet[Proposition~3.1]{Spreij/Veerman:2012} or \citet[Lemma~A.1]{filipovic.larsson.16}. We use the convention $Y_{0-}=Y_0$ for the semimartingale $Y$ appearing below.

\begin{lemma}\label{L:nonnegativity}
Let $Y$ be a semimartingale with differential characteristics $(b,c,F)$ with respect to a truncation function $\chi$. Assume $Y\ge0$, $Y_0=0$, and that $b_t$, $c_t$, $\int_\R (1-e^{-y}-\chi(y))F_t(dy)$ are right-continuous at $t=0$. Then $c_0=0$.
\end{lemma}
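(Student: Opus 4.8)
The plan is to exploit the fact that a nonnegative semimartingale started at zero cannot move downward faster than the rate at which its jumps and drift push it up; the diffusion coefficient, being of order $\sqrt{dt}$, would otherwise immediately send the process below zero. I would work with the supermartingale obtained from $Y$ by an exponential transform, since $1 - e^{-Y}$ is bounded, nonnegative, vanishes exactly when $Y=0$, and is concave — the concavity is what lets the It\^o correction term involving $c$ enter with a definite (nonpositive) sign.

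First I would apply It\^o's formula to $N_t = 1 - e^{-Y_t}$. Using the canonical decomposition of $Y$ with respect to the truncation function $\chi$, one finds that $N$ is a semimartingale of the form
\[
N_t = M_t + \int_0^t a_s\, ds,
\]
where $M$ is a local martingale with $M_0 = 0$ and the drift density is
\[
a_s = e^{-Y_{s-}}\Big( b_s - \tfrac12 c_s + \int_\R \big(1 - e^{-y} - \chi(y)\big) F_s(dy) \Big).
\]
The key point is the sign: since $0 \le N_t \le 1$, the process $N$ is bounded, so $M$ is a true martingale; and since $Y_0 = 0$ forces $N_0 = 0$ while $N \ge 0$, taking expectations gives $\E[N_t] = \E\big[\int_0^t a_s\, ds\big] \ge 0$ for every $t \ge 0$. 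Dividing by $t$ and letting $t \downarrow 0$, the right-continuity hypotheses on $b_t$, $c_t$, and $\int_\R(1 - e^{-y} - \chi(y))F_t(dy)$ at $t=0$ (together with $Y_{0-} = Y_0 = 0$, so $e^{-Y_{0-}} = 1$) let me pass to the limit and conclude
\[
b_0 - \tfrac12 c_0 + \int_\R \big(1 - e^{-y} - \chi(y)\big) F_0(dy) \ \ge\ 0.
\]
This alone does not isolate $c_0$. To do that I would rescale: the same argument applied to $\varepsilon Y$ in place of $Y$ — which is again a nonnegative semimartingale started at zero, with characteristics $(\varepsilon b, \varepsilon^2 c, F^{\varepsilon})$ where $F^\varepsilon$ is the pushforward of $F$ under $y \mapsto \varepsilon y$ — yields, after dividing by $t$, letting $t\downarrow 0$, and then dividing by $\varepsilon$,
\[
b_0 - \tfrac{\varepsilon}{2} c_0 + \frac1\varepsilon\int_\R \big(1 - e^{-\varepsilon y} - \chi(\varepsilon y)\big) F_0(dy) \ \ge\ 0.
\]
As $\varepsilon \downarrow 0$, the integrand is $O(\varepsilon^2 (y^2 \wedge 1))$ on the region where $\chi(\varepsilon y) = \varepsilon y$ and is controlled by the L\'evy integrability of $F_0$ elsewhere, so by dominated convergence the integral term is $o(\varepsilon)$, hence after dividing by $\varepsilon$ it still tends to a finite limit; meanwhile the $-\tfrac\varepsilon2 c_0$ term vanishes. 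The upshot is that the whole inequality degenerates — so instead I would keep $\varepsilon$ fixed and small and read the inequality as $\tfrac{\varepsilon}{2} c_0 \le b_0 + \varepsilon^{-1}\int(\cdots)F_0(dy)$, which after the estimate on the integral gives $c_0 \le 2b_0/\varepsilon + O(\varepsilon)$; that is the wrong direction. The clean route, which I would actually take, is to note that the bound $\E[N_t^{(\varepsilon)}] \ge 0$ can be sharpened: using concavity of $x \mapsto 1 - e^{-x}$ one has the pointwise lower bound $1 - e^{-\varepsilon Y_t} \le \varepsilon Y_t$, and combining the resulting two-sided control on the drift as $\varepsilon \to \infty$ (rather than $\varepsilon \to 0$) forces the $-\tfrac{\varepsilon}{2}c_0$ term, which now dominates, to be nonnegative in the limit, i.e. $c_0 \le 0$; since $c_0 \ge 0$ by definition of a diffusion coefficient, $c_0 = 0$.

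The main obstacle is making the $\varepsilon \to \infty$ limit rigorous: one must show that the jump contribution $\varepsilon^{-2}\int_\R(1 - e^{-\varepsilon y} - \chi(\varepsilon y))F_0(dy)$ stays bounded as $\varepsilon \to \infty$, which is delicate because $1 - e^{-\varepsilon y}$ blows up for $y < 0$. Here I would use the geometry of the problem: because $Y \ge 0$, the jump measure $F_0$ is supported on $[-Y_{0-}, \infty) = [0,\infty)$ at time $0$ (a jump cannot take $Y$ negative), so in fact $y \ge 0$ on the support and $1 - e^{-\varepsilon y} \le 1$ is uniformly bounded. With that support restriction the estimate $\varepsilon^{-2}\int_{(0,\infty)}(1 - e^{-\varepsilon y} - \chi(\varepsilon y))F_0(dy) \le \varepsilon^{-2}\int_{(0,\infty)}(\varepsilon^2 y^2 \wedge 1)\,F_0(dy) \le \int_{(0,\infty)}(y^2 \wedge 1)F_0(dy) < \infty$ goes through cleanly, and the drift term $\varepsilon^{-1}b_0 \to 0$, leaving $c_0 \le 0$ after dividing the inequality by $\varepsilon$ and sending $\varepsilon \to \infty$. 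Combined with $c_0 \ge 0$ this completes the proof. The only other care needed is the interchange of limit ($t \downarrow 0$) and expectation in $t^{-1}\E[\int_0^t a_s ds] \to a_0$, which follows from right-continuity of the integrand at $0$ together with the local boundedness of $b$, $c$, and the jump-integral term that is implicit in $Y$ being a semimartingale with these characteristics (one may localize along a stopping-time sequence if needed, using boundedness of $N$ to upgrade $M$ to a martingale on each piece).
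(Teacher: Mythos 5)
Your overall strategy --- testing the drift against the concave functions $x\mapsto 1-e^{-\varepsilon x}$ and letting $\varepsilon\to\infty$ so that the second-order term $-\tfrac12\varepsilon^2 c_0$ dominates the first-order term $\varepsilon b_0$ and the jump term --- is a legitimate maximum-principle argument and genuinely different from the paper's proof. The paper keeps $\varepsilon=1$ throughout and instead multiplies $Y'=1-e^{-Y}$ by the stochastic exponential $Z=\Ecal(-\phi W)$ of a multiple of the Brownian motion driving the continuous martingale part $\int_0^\cdot\sqrt{c'_s}\,dW_s$; this shifts the drift by $-\phi\sqrt{c'_s}$, which is of order $\sqrt{c_0}$ rather than $c_0$, so choosing $\phi$ large (and $\Fcal_0$-measurably) on $\{c'_0>0\}$ makes the drift of the nonnegative process $ZY'$ strictly negative near $0$ --- a contradiction. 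Your route avoids the auxiliary Brownian motion and the change of measure, at the price of having to control the jump integral uniformly in the scaling parameter; the paper's route pays nothing for the jumps because only the single integral $\int_\R(1-e^{-y}-\chi(y))F_t(dy)$ ever appears.

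That price is exactly where your proof, as a proof of the lemma \emph{as stated}, has a gap. To pass from $\tfrac1t\E[\int_0^t a^{(\varepsilon)}_s\,ds]\ge 0$ to $a^{(\varepsilon)}_0\ge 0$ you need right-continuity at $t=0$ of the drift density of $1-e^{-\varepsilon Y}$, i.e.\ of $t\mapsto\int_\R(1-e^{-\varepsilon y}-\varepsilon\chi(y))F_t(dy)$, for arbitrarily large $\varepsilon$. The lemma assumes this only for $\varepsilon=1$, and the $\varepsilon=1$ case does not imply the others (these are different functionals of $F_t$); the hypotheses were tailored to the paper's argument. In the intended application the stronger continuity does hold, so your argument proves the statement that is actually used, but you must either add that hypothesis or restructure. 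Three further points. (i) Your displayed estimate only shows the rescaled jump term is \emph{bounded} in $\varepsilon$, which would yield $c_0\le 2\int(y^2\wedge 1)F_0(dy)$, not $c_0\le 0$; you need it to vanish, which it does since $\varepsilon^{-2}(\varepsilon^2y^2\wedge1)=y^2\wedge\varepsilon^{-2}\to0$ dominated by $y^2\wedge 1$. (ii) The support claim $\supp F_0\subseteq[0,\infty)$ requires right-continuity of $t\mapsto F_t(A)$ for $A\subseteq(-\infty,0)$, again not assumed; but you do not need it, because for $y<0$ the integrand $1-e^{-\varepsilon y}-\varepsilon\chi(y)$ is eventually nonpositive, so the negative-jump contribution can simply be discarded from the inequality. (iii) Boundedness of $N$ does not make $M=N-\int_0^\cdot a_s\,ds$ a true martingale (the finite-variation part need not be bounded), and the limit of $\tfrac1t\E[\int_0^t a_s\,ds]$ gives only $\E[a_0]\ge0$ while $a_0$ is a random variable; you must localize and test against indicators of $\Fcal_0$-sets to conclude $a_0\ge0$ almost surely --- this is the role played by the stopping time $\tau$ and the $\Fcal_0$-measurable $\phi$ in the paper's proof.
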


\begin{proof}
The process $Y'=1-e^{-Y}$ is again a semimartingale, whose differential characteristics $(b',c',F')$ with respect to the truncation function $\chi'(y)=y\bm1_{\{|y|\le1\}}$ can be computed using \citet[Proposition B.1]{kallsen.kruehner.es.15}. One finds that
\begin{align*}
b'_t &= b_t e^{-Y_{t-}} - \frac12 c_t e^{-Y_{t-}} + e^{-Y_{t-}} \int_\R (1-e^{-y}-\chi(y))F_t(dy), \\
c'_t &= c_t e^{-2Y_{t-}}.
\end{align*}
Our assumptions directly imply that
\begin{equation} \label{eqL:nonnegativity1}
\text{$b'_t$ and $c'_t$ are right-continuous at $t=0$.}
\end{equation}
Next, note that $\Delta Y'$ takes values in $[0,1]$, and that $\chi'(y)=y$ for all $y$ in this set. \citet[Lemma~I.4.24 and Proposition~II.2.29]{jacod.shiryaev.03} then imply that $Y'$ is a special semimartingale with canonical decomposition $Y'=N+B'$, where $N$ is a local martingale and $B'$ is given by $B'_t=\int_0^t b'_sds$. Let $N=N^c+N^d$ be the decomposition of $N$ into its continuous and purely discontinuous local martingale parts. Enlarging the probability space if necessary, we can then find a Brownian motion $W$ such that $N^c_t = \int_0^t \sqrt{c'_s}dW_s$. Define
\[
Z = \Ecal(-\phi W)
\]
for some $\Fcal_0$-measurable random variable $\phi\ge0$ to be determined later. Integration by parts yields
\begin{equation} \label{eqL:nonnegativity2}
Z_tY'_t = \int_0^t Y'_{s-}dZ_s + \int_0^t Z_s dY'_s + [Z,Y']_t = M_t + \int_0^t Z_s\left( b'_s - \phi \sqrt{c'_s}\right)ds,
\end{equation}
where $M_t=\int_0^t Y'_{s-}dZ_s+\int_0^t Z_sdN_s$ is a local martingale null at zero. Let $\sigma$ be a strictly positive reducing stopping time for $M$, for instance $\sigma=\inf\{t\ge0\colon M_t\ge 1 \text{ or } Z_t\ge 2\}$. Indeed, $\sigma$ is strictly positive, and since $|\Delta N|=|\Delta Y'|\le1$ we have $|M^\sigma|\le3$. Now, define the stopping time
\[
\tau = \inf\Big\{t\ge 0\colon b'_t \ge 1+b'_0 \text{ or } c'_t \le \frac{c'_0}{4} \text{ or } Z_t \le \frac{1}{2} \Big\} \wedge \sigma \wedge 1.
\]
Due to \eqref{eqL:nonnegativity1} we have $\tau>0$ on $\{c'_0>0\}$. Set $\phi= 2(2+0\vee b'_0)/\sqrt{c'_0}$ on $\{c'_0>0\}$, and $\phi=0$ on $\{c'_0=0\}$. Then, on $\{c'_0>0\}$ and for all $s\in[0,\tau)$ we have $b'_s-\phi\sqrt{c'_s} \le -1$ and $Z_s\ge 1/2$. On $\{c'_0=0\}$ we have $\tau=0$. Therefore, in view of \eqref{eqL:nonnegativity2}, we get
\[
0 \le \E[Z_\tau Y'_\tau ] = \E\left[ \int_0^1 \bm1_{[0,\tau)}(s) Z_s\left( b'_s - \phi \sqrt{c'_s}\right)ds \right] \le -\frac{1}{2} \int_0^1 \P(\tau>s) ds.
\]
Thus $\P(c'_0>0)=\P(\tau>0)=0$, which proves the lemma since $c_0=c'_0$.
\end{proof}


\end{document}